\newif\ifprint
\renewcommand{\mathbf}[1]{\bm{#1}} 
	\definecolor{linkred}{rgb}{0,0,0} 
	\definecolor{linkblue}{rgb}{0,0,0} 
	\definecolor{linkred}{rgb}{0.7,0.2,0.2}
	\definecolor{linkblue}{rgb}{0,0.2,0.6}
\numberwithin{equation}{section} 
\def\ps@handbook{\def\@oddhead{\hfill \leftmark \hfill\thepage }
\def\@evenhead{\thepage \hfill \rightmark \hfill}
\def\@oddfoot{}
\def\@evenfoot{}}
\def\@evenhead{}
\def\@oddfoot{}
\def\@evenfoot{\hfill\copyright\ China Higher Education Press}
\def\list#1#2{\ifnum \@listdepth >5\relax \@toodeep \else \global
\advance \@listdepth\@ne \fi \rightmargin \z@ \listparindent\z@
\itemindent\z@ \csname @list\romannumeral\the\@listdepth\endcsname
\def\@itemlabel{#1}\let\makelabel\@mklab \@nmbrlistfalse #2\relax
\@trivlist \parskip -\parsep \parindent\listparindent \advance
\linewidth -\rightmargin \advance\linewidth -\leftmargin \advance
\@totalleftmargin \leftmargin \parshape \@ne \@totalleftmargin
\linewidth \ignorespaces}
\renewcommand*\l@section{\@tocline{1}{0pt}{0em}{1em}{}}
\renewcommand*\l@subsection{\@tocline{2}{0pt}{1.5em}{2em}{}} 
\renewcommand{\theequation}{\thesection.\arabic{equation}}
\def\thebibliography#1{\section*{References}
\list{[\arabic{enumi}]}{\settowidth \labelwidth{[#1]} \leftmargin
\labelwidth \advance \leftmargin \labelsep \usecounter{enumi}}
\def\newblock{\hskip .11em plus .33em minus .07em} \sloppy
\clubpenalty 4000 \widowpenalty 4000 \sfcode`\.=1000 \relax}
\titleformat{\section}{\normalfont\large\bfseries}{\thesection.}{0.5em}{}[\kern0.em]
\titleformat{\subsection}{\normalfont\bfseries}{\thesubsection.}{0.3em}{}[\kern0.em]
\titleformat{\subsubsection}[runin]{\normalfont\bfseries}{\thesubsubsection.}{0.5em}{}[\kern0.5em]
\def\fofsubsubsection#1{\refstepcounter{equation}\subsubsection*{\theequation.\kern0.25em #1}}
\def\foisubsubsection#1{\refstepcounter{equation}\subsubsection*{\kern\parindent\theequation.\kern0.25em #1}}
\newtheorem{defn}[equation]{Definition}
\newtheorem{ex}{Example}
\newtheorem{lemma}[equation]{Lemma}
\newtheorem{cor}[equation]{Corollary}
\newtheorem{prop}[equation]{Proposition}
\newtheorem{thm}[equation]{Theorem}
\newcommand{\res}[1]{\begin{array}[d]{l}\\{\rm Res}\\^{#1}\end{array}\hspace{-1mm}}
\newcommand{\bc}{\mathbb{C}}
\newcommand{\bp}{\mathbb{P}}
\newcommand{\bq}{\mathbb{Q}}
\newcommand{\br}{\mathbb{R}}
\newcommand{\bz}{\mathbb{Z}}
\newcommand{\modm}{\mathcal{M}}
\newcommand{\bb}{{\mathbf b}}
\newcommand{\f}{\mathcal{F}}
\newcommand{\lc}{\mathcal{L}}
\newcommand{\tr}{{\rm tr}\hspace{.2mm}}
\newcommand{\fat}{\f\hspace{-.3mm}{\rm at}_{g,n}}
\newcommand{\fato}{\f\hspace{-.3mm}{\rm at}}
\begin{document}

\title[Moduli space, lattice points and Hurwitz problems]{Cell decompositions of moduli space, lattice points and Hurwitz problems}
\author{Paul Norbury}
\address{\hspace{-5mm}Department of Mathematics and Statistics\\
University of Melbourne\\Australia 3010}
\email{pnorbury@ms.unimelb.edu.au}
\keywords{}
\subjclass[2000]{32G15; 30F30; 05A15}
\date{}

\begin{abstract}

\noindent In this article we describe the cell decompositions of the moduli space of Riemann surfaces due to Harer, Mumford and Penner and its relationship to a Hurwitz problem.  The cells possess natural linear structures and with respect to this they can be described as rational convex polytopes.  We show how to effectively calculate the number of lattice points and the volumes over all cells.  These calculations contain deep information about the moduli space.

\end{abstract}

\maketitle

\tableofcontents

\section{Introduction}

Let $\modm_{g,n}$ be the moduli space of genus $g$ Riemann surfaces with $n$ labeled points for $2-2g-n<0$.  A Riemann surface gives a conformal class of metrics on an oriented surface, and in particular negative Euler characteristic guarantees in each conformal class a unique complete hyperbolic metric on the complement of the $n$ labeled points.  Thus we can equivalently define $\modm_{g,n}$ to be the moduli space of oriented genus $g$ hyperbolic surfaces with $n$ labeled cusps.  A third characterisation of the moduli space uses the analogous fact that each conformal class contains a singular {\em flat} metric with cone angles.  More precisely, the complement of the set of labeled points in a Riemann surface uniquely decomposes into a union of $n$ flat cylinders of unit circumference  \cite{StrQua} which meet along a 1-dimensional graph on the surface known as a fatgraph.  Each edge of the graph inherits a length and a vertex of the graph of valency $\nu$ corresponds to a cone angle on the surface of angle $\nu\pi$.  The fatgraphs also arise from the hyperbolic perspective essentially as cut loci of unit horocycles around cusps \cite{PenDec}.

The third description of the moduli space induces a natural cell decomposition of the moduli space where each cell is indexed by a fatgraph.  The cells have a number of good properties.  They have a natural linear structure and with respect to this they are convex polytopes.  They have a natural volume form which was used by Kontsevich \cite{KonInt}  to define a volume $V_{g,n}$ of the moduli space.  There are natural integer points inside the cells and they are naturally rational convex polytopes.  

Many different cell decompositions of the moduli space arise if one replaces the unit circumference condition, in the decomposition of a Riemann surface  into a union of $n$ flat cylinders, with $n$ fixed circumferences $(b_1,...,b_n)$.  As before, the cells are convex polytopes with respect to a linear structure and equipped with a natural volume form.  Kontsevich's volume depends on the cell decomposition and becomes a function $V_{g,n}(b_1,...,b_n)$ which he proved is polynomial in the $b_i$.  When the $b_i$ are integers there are natural integer points inside each cell.  These integer points correspond to a finite set of algebraic curves defined over the algebraic numbers that  are represented as covers of $S^2$ branched over three points.  Their fatgraphs - which define the intersection of the flat cylinders - have integer edge lengths.

For example, the moduli space $\modm_{0,4}$ has a very simple cell decomposition given by two triangles glued along their perimeters.  The diagram shows one of these triangles with its integer points which correspond to algebraic numbers in $\overline{\bc}-\{0,1,\infty\}$.  These algebraic numbers are calculated in Example~\ref{ex:dessin}.
\begin{figure}[ht]  
	\centerline{\includegraphics[height=2.5cm]{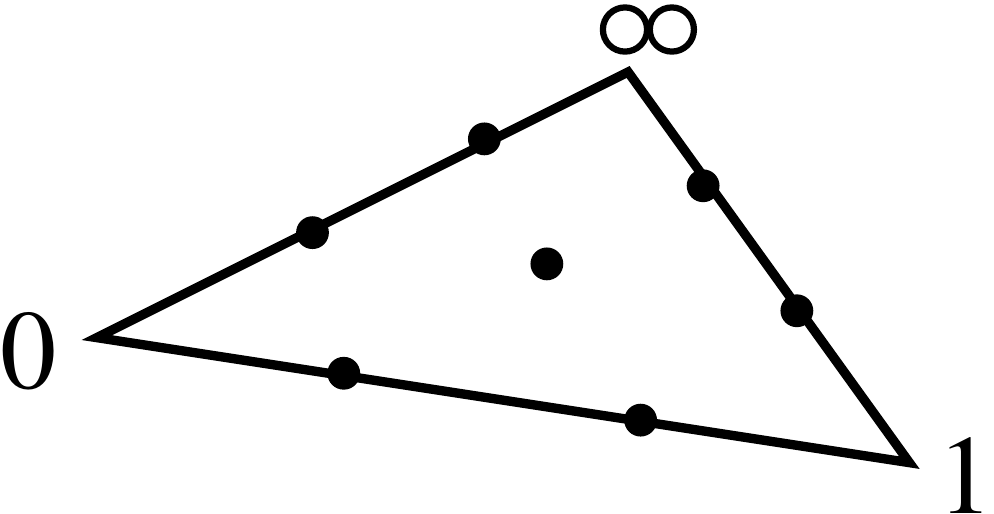}}
	\caption{Cell decomposition of $\modm_{0,4}$}
	\label{fig:cell}
\end{figure}

It is natural to count the Riemann surfaces corresponding to integer points inside cells.  The number of lattice points depends on the cell decomposition and becomes a function $N_{g,n}(b_1,...,b_n)$ for $(b_1,...,b_n)\in\bz^n$  \cite{NorCou} analogous to the volume $V_{g,n}(b_1,...,b_n)$.   It is quasi-polynomial in the $b_i$ (meaning it is polynomial on each coset of some sublattice of finite index in $\bz^n$) and can be effectively calculated.  It contains deep information about the moduli space such as its orbifold Euler characteristic and intersection numbers of tautological classes on the compactified moduli space.  

In Section~\ref{sec:hur} we relate the count of Riemann surfaces $N_{g,n}(b_1,...,b_n)$ to a Hurwitz problem.  In a general Hurwitz problem one counts branched covers $\Sigma\to\Sigma'$ unramified outside fixed branch points $\{p_1,p_2,...,p_r,p_{r+1}\}\subset\Sigma'$ with partitions $\lambda^{(1)},\lambda^{(2)},...,\lambda^{(r+1)}$ describing the profile of the cover over the branch points.  We describe two Hurwitz problems---simple Hurwitz numbers and a Belyi Hurwitz problem.  The simple Hurwitz number $H_{g,\mu}$ counts connected genus $g$ covers of $S^2$ with ramification $\mu=(b_1,...,b_n)$ over $\infty$ and simple branching, i.e. ramification $(2,1,1,....)$, elsewhere.  The Belyi Hurwitz problem, which gives an equivalent description of $N_{g,n}(b_1,...,b_n)$, counts connected genus $g$ covers of $S^2$ unramified outside three branch points $\{p_1,p_2,p_3\}$ with profile $\lambda^{(3)}=(b_1,...,b_n)\in\bz_+^n$ over $p_3$, the partition  $\lambda^{(2)}=(2,2,...,2)$ over $p_2$,  and over $p_1$ all partitions containing no 1s.   

In Section~\ref{sec:lap} we describe the framework of Eynard and Orantin \cite{EOrInv} which gives a common context for different moduli space and Hurwitz problems.  In particular it explains the appearance of intersection numbers on the compactified moduli space of curves in each problem.\\

\noindent {\em Acknowledgements.}  The author would like to thank the referee for many useful comments.

\section{Cell decompositions of the moduli space of curves}

The aim of this section is to describe the cell decomposition of the {\em decorated} moduli space $\modm_{g,n}\times\br^n_+$ of genus $g$ curves with $n$ labeled points equipped with positive numbers $(b_1,...,b_n)\in\br_+^n$.  The moduli space has (real) dimension $6g-6+2n$ so the decorated moduli space has dimension $6g-6+3n$.  The cell decomposition of $\modm_{g,n}\times\br^n_+$ induces many cell decompositions on the moduli space $\modm_{g,n}$.   

The decorated moduli space has a cell decomposition  
\begin{equation}  \label{eq:cell}
\modm_{g,n}\times\br^n_+\cong\left(\bigsqcup_{\Gamma\in \fat}P_{\Gamma}\right)/\sim
\end{equation}
where the indexing set $\fat$ is the set of labeled fatgraphs of genus $g$ and $n$ boundary components, described below, and the cell $P_{\Gamma}\cong\br_+^{e(\Gamma)}$ for $e(\Gamma)$ the number of edges of the graph $\Gamma$.   This was proved independently by Harer \cite{HarVir}, who presented a proof of Mumford using Strebel differentials, and by Penner \cite{PenDec} using hyperbolic geometry.
\begin{ex}
The moduli space $\modm_{0,4}$ parametrises configurations of four distinct labeled points on $S^2$ up to conformal equivalence.  The group $PSL(2,\bc)$ acts conformally on $S^2$ and is 3-transitive, i.e. it can take any three points to any three points in $S^2$.  Hence 
\[\modm_{0,4}\cong S^2-\{0,1,\infty\}\] 
where the labeled points 1, 2 and 3 are taken to 0, 1 and $\infty$, respectively, and the labeled point 4 is taken to a point of $S^2-\{0,1,\infty\}$.   The decorated moduli space $\modm_{0,4}\times\br^4_+$ is 6-dimensional.  The cell decomposition of the decorated moduli space for $(g,n)=(0,4)$ consists of
\[ \modm_{0,4}\times\br^n_+\cong \{64\ [6{\rm -cells}],\ 144\ [5{\rm -cells}],\ 99\ [4{\rm -cells}],\ 20\ [3{\rm -cells}]\}/\sim \] 
and as expected
\[ 64-144+99-20=-1=\chi\left(\modm_{0,4}\right)=\chi\left(S^2-\{0,1,\infty\}\right).\]  
\end{ex}
The example uses the 327 labeled fatgraphs of genus 0 with 4 boundary components (produced from 21 unlabeled fatgraphs) which we now describe.

\subsection{Fatgraphs}  \label{sec:fat}

\begin{defn}
A fatgraph is a connected graph $\Gamma$ with all vertices of valency $>2$ endowed with a cyclic ordering of half-edges at each vertex.  It is uniquely determined by the triple $(X,\tau_0,\tau_1)$ where $X$ is the set of half-edges of $\Gamma$---so each edge of $\Gamma$ appears in $X$ twice---$\tau_1:X\to X$ is the involution that swaps the two half-edges of each edge and $\tau_0:X\to X$ the automorphism that permutes cyclically the half-edges with a common vertex. The underlying graph $\Gamma$ has vertices $X_0=X/\tau_0$, edges $X_1=X/\tau_1$ and boundary components $X_2=X/\tau_2$ for $\tau_2=\tau_0\tau_1$. 
\end{defn}
 One can allow valence $\leq 2$ vertices in the definition of fatgraphs.  Nevertheless, we find it more useful here to exclude such vertices. 
 \begin{figure}[ht]  
	\centerline{\includegraphics[height=2.5cm]{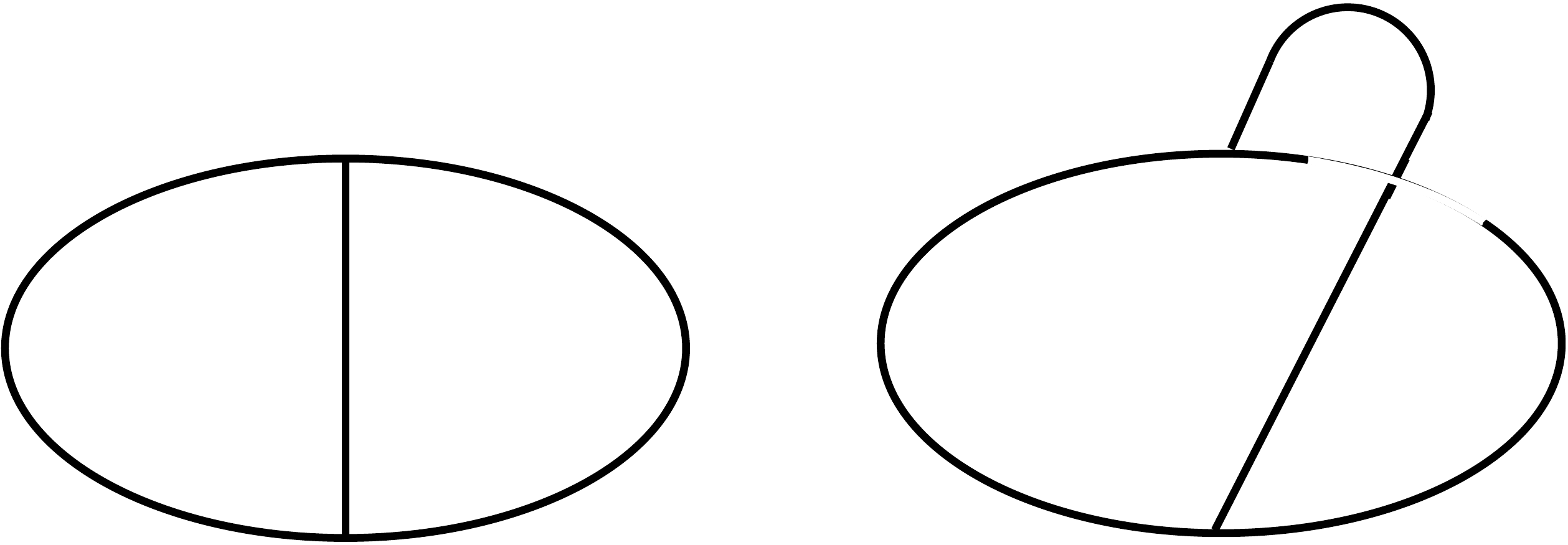}}
	\caption{Fatgraphs}
	\label{fig:fat}
\end{figure}

A fatgraph structure allows one to uniquely thicken the graph to a surface with boundary or equivalently it is an embedding up to isotopy of $\Gamma$ into an orientable surface with disk complements containing labeled points. In particular it acquires a type $(g,n)$ for $g$ the genus and $n$ the number of boundary components.  The two fatgraphs in Figure~\ref{fig:fat} are different, although the underlying graphs are the same.  
They have genus 0 and 1 which is made clear in Figure~\ref{fig:fats}.  The cyclic ordering of the half-edges with a common vertex is induced by the orientation of the page.
\begin{figure}[ht]  
	\centerline{\includegraphics[height=2.8cm]{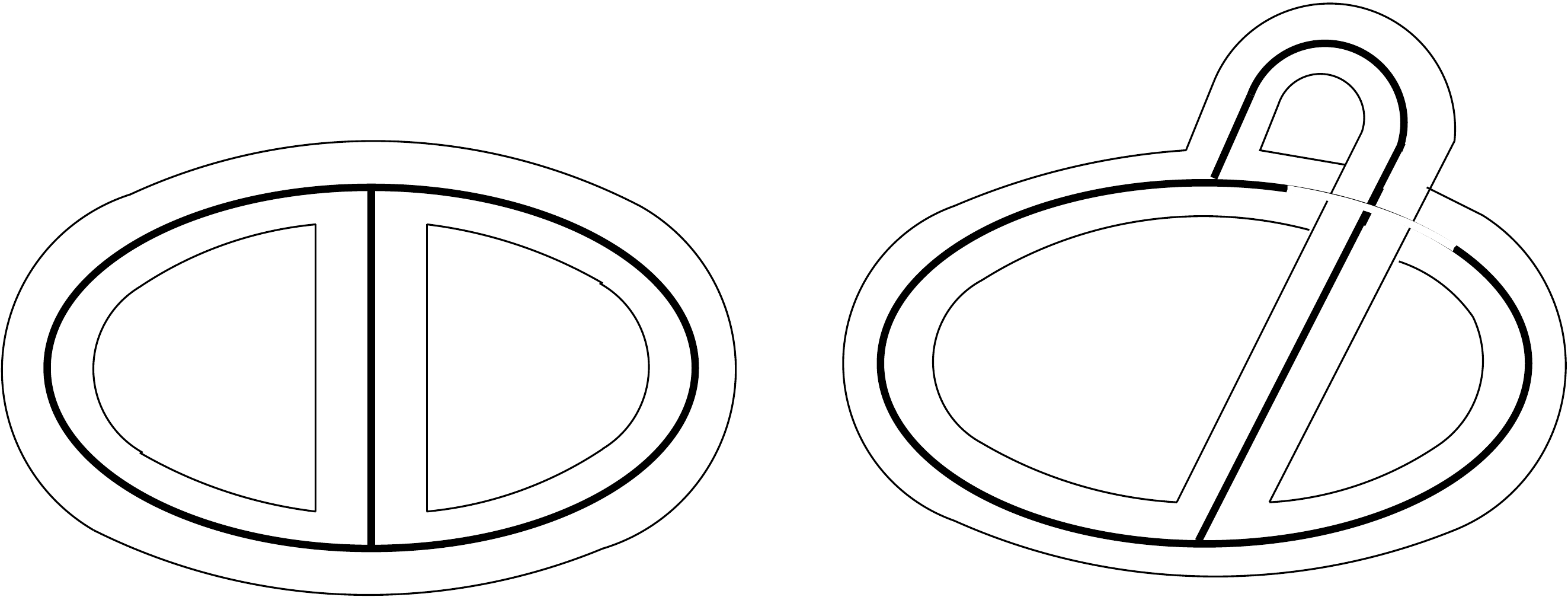}}
	\caption{Graphs embedded in genus 0 and 1 surfaces}
	\label{fig:fats}
\end{figure}

An {\em automorphism} of a fatgraph $\Gamma$ is a permutation $\phi:X\to X$ that commutes with $\tau_0$ and $\tau_1$.  An automorphism descends to an automorphism of the underlying graph. The group generated by $\tau_0$ and $\tau_1$ acts transitively on $X$, so an automorphism that fixes an oriented edge is necessarily trivial since $\phi(E)=E$ implies $\phi(\tau_0 E)=\tau_0 E$ and $\phi(\tau_1 E)=\tau_1 E$.

A {\em labeled} fatgraph is a fatgraph with boundary components labeled $1,...,n$.  An {\em automorphism} of a labeled fatgraph $\Gamma$ is a permutation $\phi:X\to X$ that commutes with $\tau_0$ and $\tau_1$ and acts trivially on $X_2$. 
\begin{defn}
Define $\fat$ to be the set of all labeled fatgraphs of genus $g$ and $n$ boundary components.  
\end{defn}
If $\Gamma\in \fat$ then the valency $>2$ condition on the vertices ensures that the number of edges $e(\Gamma)$ of $\Gamma$ is bounded $e(\Gamma)\leq 6g-6+3n$ with equality when the graph is trivalent.  In particular, $\fat$ is a finite set.  For small examples, $\fato_{0,3}$ consists of 4 labeled fatgraphs (from 2 unlabeled fatgraphs), $\fato_{1,1}$ consists of 2 labeled fatgraphs and $\fato_{0,4}$ consists of 327 labeled fatgraphs (from 21 unlabeled fatgraphs.)

\begin{ex}
[Calculations of automorphism groups of fatgraphs.]  

Any automorphism of the graph in $\fato_{0,3}$ in Figure~\ref{fig:fataut} must fix the boundary components 1 and 2, say, and hence it must fix the oriented edge  between them.  By the remark above, this implies the automorphism is trivial and hence the automorphism group is trivial.  This argument generalises to any fatgraph in $\fato_{0,n}$ since there is always an edge common to two different boundary components.

There are two fatgraphs in $\fato_{1,1}$ shown in Figure~\ref{fig:fataut} .  In both these examples, $\tau_0$ and $\tau_1$ commute so they are in fact automorphisms of the underlying graph.  In both cases the automorphism group is cyclically generated by $\tau_0\tau_1$ yielding $\bz_6$ and $\bz_4$ respectively.
\begin{figure}[ht]  
	\centerline{\includegraphics[height=6cm]{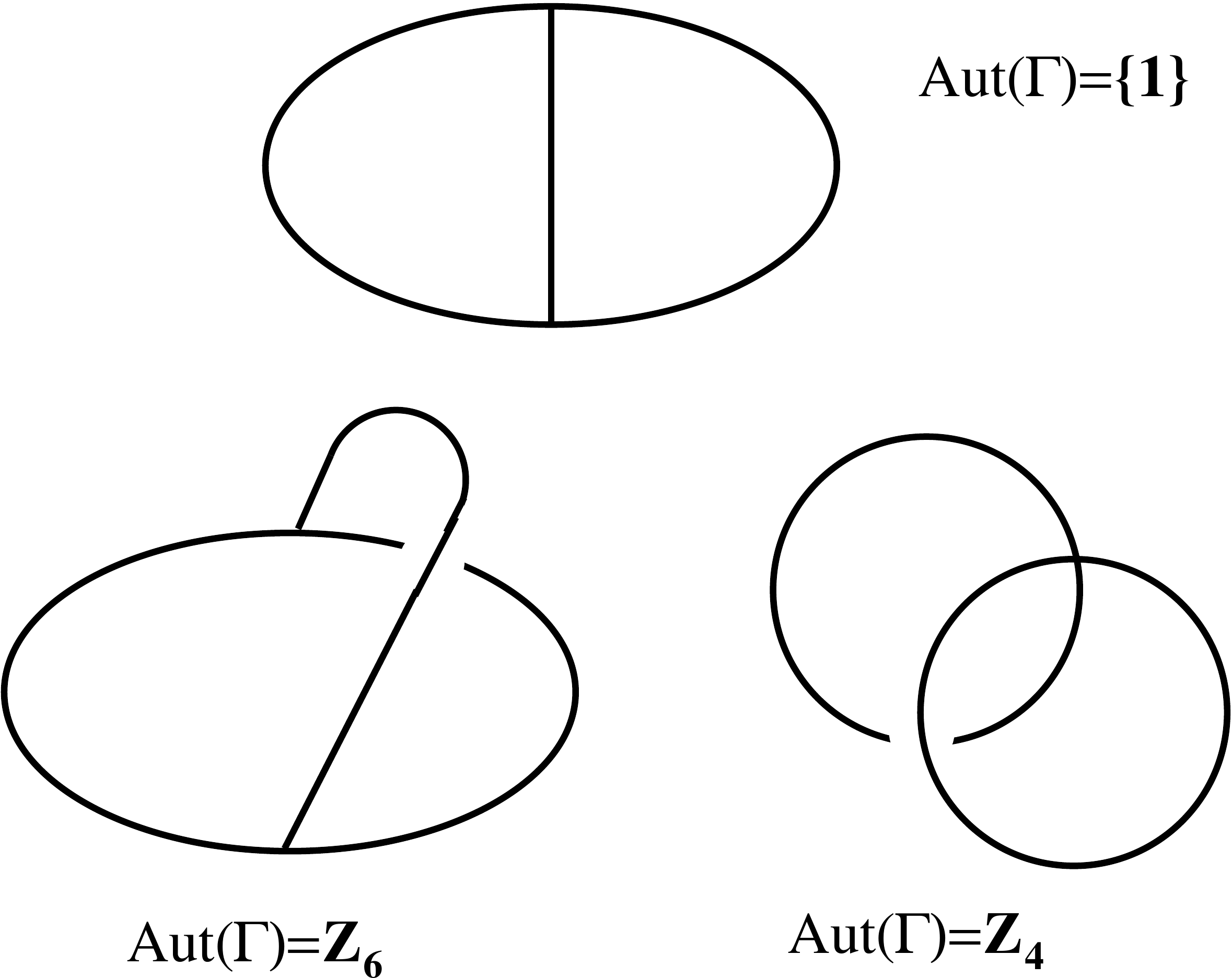}}
	\caption{Automorphism groups of fatgraphs}
	\label{fig:fataut}
\end{figure}
\end{ex}
A metric on a labeled fatgraph $\Gamma$ assigns positive numbers---lengths---to each edge of the fatgraph.   
\begin{defn}   
Define $P_{\Gamma}\cong R_+^{e(\Gamma)}$ to consist of all metrics on the fatgraph $\Gamma$. 
\end{defn}
The cells $P_{\Gamma}$ have maximum dimension $6g-6+3n$ when $\Gamma$ is trivalent.  They naturally glue together along lower dimensional cells by identifying the limiting metric on a fatgraph when the length of an edge $l_E\to 0$ with the metric on the new fatgraph with the edge $E$ contracted.  This yields the the cell-complex
\[ \modm_{g,n}^{\rm combinatorial}:=\left(\bigsqcup_{\Gamma\in \fat}P_{\Gamma}\right)/\sim\]
where $\sim$ denotes gluing lower dimensional cells and identifying isometric fatgraphs.  In particular automorphisms of a fatgraph lead to orbifold points.

The cell decomposition (\ref{eq:cell}) is equivalent to the claim that the decorated moduli space is homeomorphic to this cell complex
\[\modm_{g,n}\times\br_+^n\cong\modm_{g,n}^{\rm combinatorial}.\]
There are two {\em different} homeomorphisms yielding two proofs of (\ref{eq:cell}).
Both use special metrics in the given conformal class on the complement of the labeled points---a singular flat metric and a hyperbolic metric.  The first proof is due to Harer and Mumford, \cite{HarVir}.   It begins with the simple fact that any disk with a marked point is conformally equivalent to the unit disk $\{|z|<1\}$, and the complement of 0 in the closed unit disk is conformally equivalent to a half infinite cylinder:
\begin{align*}
\{z:0<|z|\leq 1\}&\simeq [0,\infty)\times S^1\\
|dz|^2&\simeq \frac{|dz|^2}{|z|^2}=dt^2+d\theta^2,\quad \ln z=t+i\theta.
\end{align*}
The half infinite cylinder with metric $|dz|^2/|z|^2$ is conformally equivalent to a half infinite cylinder with circumference $a$ by rescaling to get $a^2|dz|^2/|z|^2$.  The local metric $a^2|dz|^2/|z|^2$ is the norm of the local meromorphic quadratic differential $a^2(dz)^2/z^2$.  The global version of this is a meromorphic quadratic differential on a compact Riemann surface locally equivalent to $a^2(dz)^2/z^2$ at each labeled point.  The coefficient $a^2$ is known as the residue of the quadratic differential with pole of order 2 since it is independent of the local parametrisation---if $w$ is another local parameter so that $z=z(w)$ with $z(0)=0$ and $z'(0)\neq 0$ then $a^2(dz)^2/z^2=a^2(dw)^2/w^2\cdot w^2z'(w)^2/z(w)^2=a^2(dw)^2/w^2\cdot(1+wh(w))$ for $h$ analytic at $w=0$.  Hence we can equivalently state that the meromorphic quadratic differential has a pole of order two with prescribed residue at each labeled point.
It defines two foliations, one having compact leaves, where the leaves correspond to the geodesics $t=$ constant and $\theta=$ constant in the coordinates above.  More invariantly the leaves of the foliation with compact, respectively non-compact, leaves are the paths along which the meromorphic quadratic differential is negative, respectively positive.  Such a meromorphic quadratic differentials exists and is unique and is known as a {\em Strebel differential} \cite{StrQua}.  Zeros of the quadratic differential correspond to cone points which are multiples of $\pi$ in the flat metric.  Thus, in a given conformal class together with a positive number $b_i$ assigned to each labeled point, the norm of the unique Strebel differential is a singular flat metric with cone points of cone angles multiples of $\pi$ such that a deleted neighbourhood of each labeled point is a flat cylinder $S^1_{b_i}\times[0,\infty)$ of circumference $b_i$.  The flat cylinders meet along a fatgraph $\Gamma$ as in figure~\ref{fig:flat}. The fatgraph inherits lengths  on its edges and hence it is a metric fatgraph or equivalently an element of $P_{\Gamma}$ thus proving one direction of (\ref{eq:cell}).  The converse, that a labeled metric fatgraph produces a decorated Riemann surface, is obtained by gluing together cylinders along a fatgraph.  
\begin{figure}[ht]  
	\centerline{\includegraphics[height=6cm]{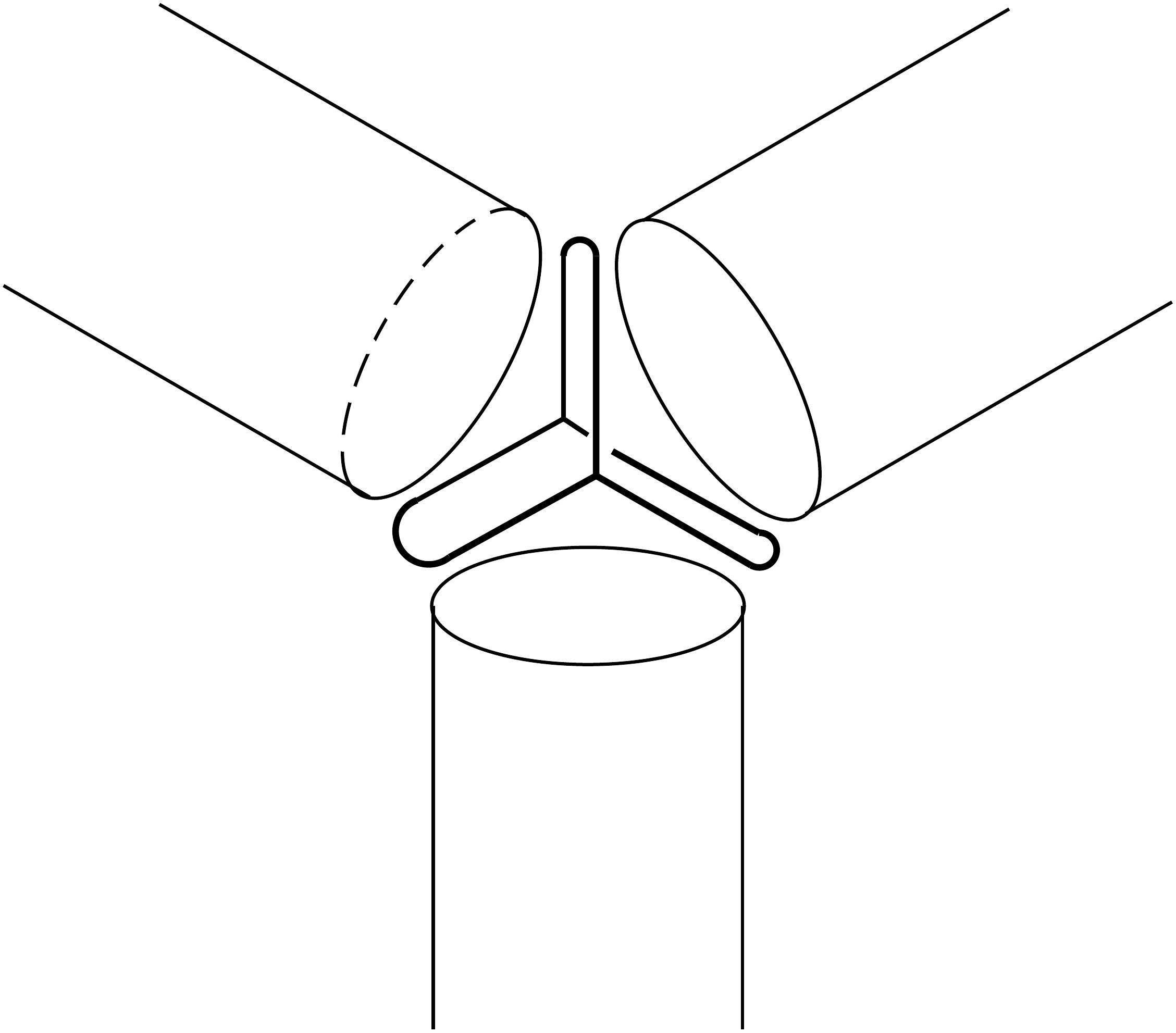}}
	\caption{Decomposition of a conformal surface into flat cylinders}
	\label{fig:flat}
\end{figure}
Section~\ref{sec:belyi} describes a special case of gluing together cylinders along a fatgraph which is quite explicit and central to this paper.  This uses the conformal equivalence between the half infinite cylinder and the set $\bc-[0,1]$ together with covers of this cylinder. 

The second cell decomposition is due to Penner.  It uses the identification of the moduli space with the moduli space of genus $g$ oriented hyperbolic surfaces with $n$ labeled cusps.  The decorated moduli space consists of oriented hyperbolic surfaces together with a choice of horocycle at each cusp of the hyperbolic surface \cite{PenDec}.  The {\em cut locus} of the horocycles, that is the set of points with more than one shortest path to the set of horocycles, defines the metric fatgraph hence an element of $P_{\Gamma}$ giving one direction of (\ref{eq:cell}).  Similarly to the first proof, the converse is obtained by gluing together hyperbolic pieces.  See for example, \cite{BEpNat}.  A related approach is to identify the decorated moduli space with the moduli space of genus $g$ oriented hyperbolic surfaces with $n$ labeled geodesic boundary components of specified length.  This is a symplectic deformation of the usual moduli space and in particular, homeomorphic.  The fatgraphs then arise as the cut loci of the geodesic boundary \cite{DoInt}.\\

For each $(b_1,...,b_n)\in\br_+^n$, the natural projection
\[\pi:\modm_{g,n}^{\rm combinatorial}\cong\modm_{g,n}\times\br_+^n\to\br_+^n\]
has fibres homeomorphic to the moduli space 
\[\modm_{g,n}^{\rm combinatorial}(b_1,...,b_n):=\pi^{-1}(b_1,...,b_n)\cong\modm_{g,n}.\]  
\begin{defn}   \label{def:cell}
Define the set of all metrics on the fatgraph $\Gamma$ with fixed boundary lengths ${\bf b}=(b_1,...,b_n)\in\br_+^n$ to be
\[P_{\Gamma}(b_1,...,b_n):=P_{\Gamma}\cap\pi^{-1}(b_1,...,b_n).\]  
\end{defn}
In particular the induced cell decomposition on each fibre
\[\modm_{g,n}^{\rm combinatorial}(b_1,...,b_n)=\left(\bigsqcup_{\Gamma\in \fat}P_{\Gamma}(b_1,...,b_n)\right)/\sim\]
defines a cell decomposition of $\modm_{g,n}$ depending on $(b_1,...,b_n)$.
\addtocounter{ex}{-2}
\begin{ex}[{\bf continued}]
For any given $(b_1,b_2,b_3,b_4)$, the intersection of most of the 327 cells of $\modm_{0,4}^{\rm combinatorial}$ with $\modm_{0,4}^{\rm combinatorial}(b_1,b_2,b_3,b_4)$ is empty making the induced cell decomposition on $\modm_{0,4}$, which is a triangulation by convex polygons, simpler.  
\begin{table}[ht]  \label{tab:triang}
\caption{Induced triangulations on $\modm_{0,4}$}
\begin{spacing}{1.4}  
\begin{tabular}{||c||c|c|c|c||} 
\hline\hline

$(b_1,b_2,b_3,b_4)$ &0-cells&1-cells&2-cells&polygons\\ \hline

$b_1=b_2=b_3=b_4$&0&3&2&2\ triangles\\ \hline
$b_1>>b_2=b_3=b_4$&6&18&11&8 triangles, 3 squares\\ \hline
$b_1>>b_2>>b_3>>b_4$&9&24&14&8 triangles, 6 squares\\ 
\hline\hline
\end{tabular} 
\end{spacing}
\end{table}
The table gives induced triangulations of $S^2-\{0,1,\infty\}$, so for example
$\modm_{0,4}^{\rm combinatorial}(b,b,b,b)$ consists of two ideal triangles---each has a face and 3 edges but the vertices are missing---with paired edges glued.   Figure~\ref{fig:cell3} shows the case $b_1>>b_2=b_3=b_4$.  Note in each case that the Euler characteristic is -1 as expected.  
\begin{figure}[ht]  
	\centerline{\includegraphics[height=3cm]{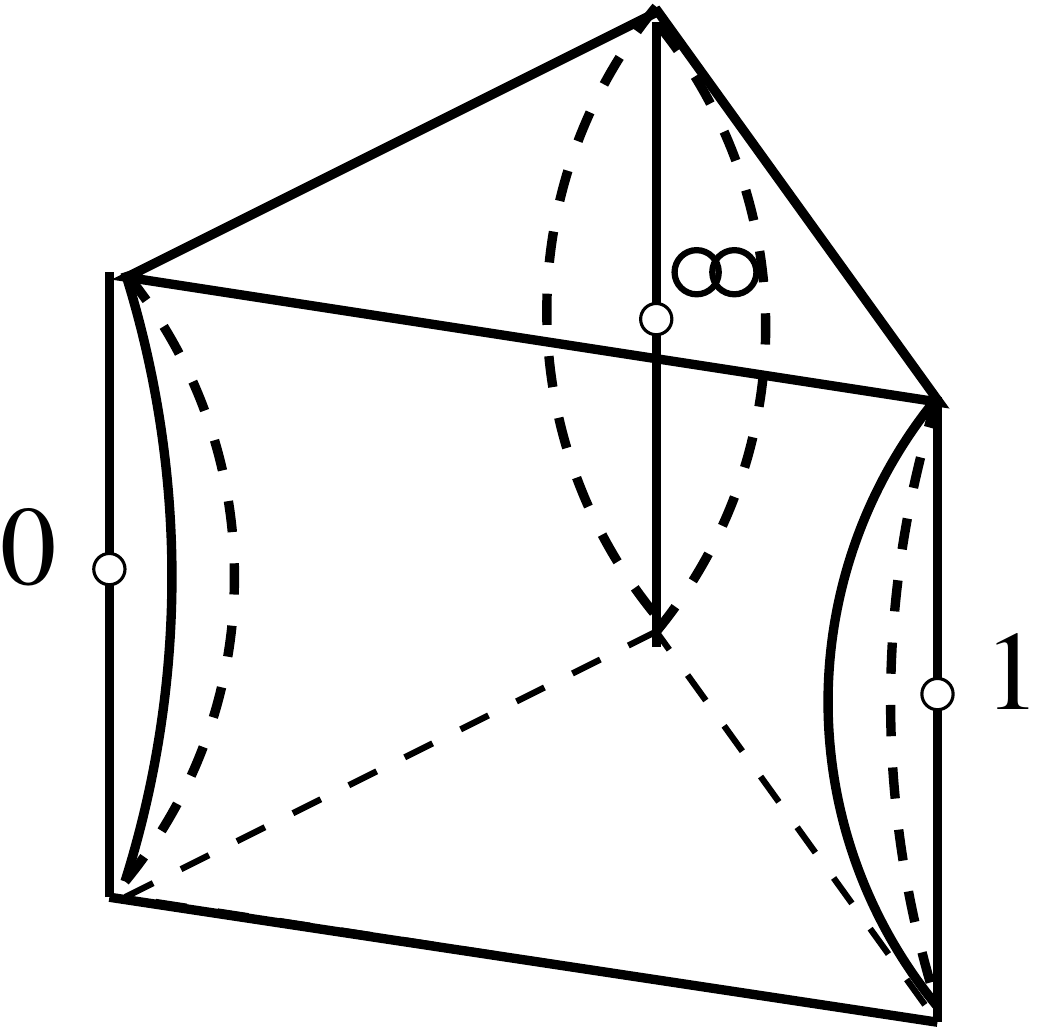}}
	\caption{Cell decomposition of $\modm_{0,4}$}
	\label{fig:cell3}
\end{figure}
\end{ex}
\addtocounter{ex}{+1}
\section{Convex polytopes, volumes and lattice points}  \label{sec:convpvl}

A {\em convex polytope} $P\subset\br^N$ is a bounded convex set whose closure is the convex hull of a finite set of vertices in $\br^N$.  Given a linear map $A:\br^N\to\br^n$ and ${\bf b}\in\br^n$ 
\[ P_A({\bf b}):=\{{\bf x}\in\br_+^N\ |\ A{\bf x}={\bf b}\}\]
defines a convex set and it is a convex polytope if $A$ has non-negative entries and non-zero columns.  
There is a natural volume form on such a convex polytope given by the quotient volume of the Euclidean volumes on $\br^N$ and $\br^n$.  Denote by $V_{P_A}({\bf b})$ the volume of $P_A({\bf b})$ with respect to this quotient volume.   The quotient volume satisfies 
\begin{equation}  \label{eq:qvol}
dV_{P_A}(b_1,...,b_n)db_1...db_n=dx_1...dx_N
\end{equation}
and is homogeneous in $b_1,...,b_n$.   If $A$ has rank $n$ then the quotient volume can be expressed in the following way.  Choose $n$ linearly independent columns of $A$ indexed by $\alpha=\{\alpha_1,...,\alpha_n\}\subset\{1,2.,...,N\}$ and form the submatrix $A_{\alpha}$.  The $N-n$ variables $\{x_i:i\notin\alpha\}$ parametrise the solution set $P_A$.  With respect to these variables the quotient volume is $\wedge_{i\notin\alpha}dx_i/\det{A_{\alpha}}$.  The quotient volume is not the same as the volume induced by the embedding.
\begin{ex}
\[A=\left(\begin{array}{ccc}1&2&2\\1&0&0\end{array}\right)\]
\begin{align*}P_A({\bf b})=\{{\bf x}\in\br_+^3|A{\bf x}={\bf b}\}\quad&\Rightarrow\quad x_1+2x_2+2x_3=b_1,\quad x_1=b_2,\quad x_i>0\\&\Rightarrow\quad 2x_2+2x_3=b_1-b_2 
\end{align*}
The quotient volume $dv$ is determined by 
\[dv\wedge d(x_1+2x_2+2x_3)\wedge dx_1=dx_1\wedge dx_2\wedge dx_3\quad\Rightarrow\quad dv=\frac{dx_2}{2}\left(=-\frac{dx_3}{2}\right)\]
and if $b_1-b_2>0$ then $x_2\in\left(0,\frac{1}{2}(b_1-b_2)\right)$, thus
\[ V_{P_A}(b_1,b_2)=\left\{\begin{array}{cl}0,&b_1<b_2\\
\frac{1}{4}(b_1-b_2),&b_1>b_2\end{array}\right.\]
\end{ex}

In general the volume of a convex polytope is difficult to calculate.  It is piecewise polynomial in the $b_i$ so in particular piecewise continuous.   The polynomials depend on chambers defined as the complement of hypersurfaces in the $b_i$.  Nevertheless, the Laplace transform of $V_{P_A}({\bf b})$ with respect to ${\bf b}$ is easy to calculate.  Denote the columns of the matrix $A$ by $\alpha_1,...,\alpha_N\in\br^n$ and assume all of its entries are nonnegative so that the Laplace transform is defined.   Using (\ref{eq:qvol}), the Laplace transform of $V_{P_A}({\bf b})$ is 
\begin{align*}
\widehat{V}_{P_A}({\bf s})&=\int \exp(-{\bf b}\cdot{\bf s})V_{P_A}(b_1,...,b_n)db_1...db_n\\
&=\int \exp(-{\bf b}\cdot{\bf s})\left\{\int dV_{P_A}(b_1,...,b_n)\right\}db_1...db_n\\
&=\int \exp(-{\bf b}\cdot{\bf s})dx_1...dx_N\\
&=\int\exp\left(-\sum_{i=1}^N(\alpha_i\cdot{\bf s})x_i\right)dx_1...dx_N\\
&=\prod_i\frac{1}{\alpha_i\cdot{\bf s}}.
\end{align*}
\addtocounter{ex}{-1}
\begin{ex}[{\bf continued}]
\[A=\left(\begin{array}{ccc}1&2&2\\1&0&0\end{array}\right)\]
\[\Rightarrow\widehat{V}_{P_A}(s_1,s_2)=\frac{1}{(s_1+s_2)2s_12s_1}\]
which can be directly checked to be the Laplace transform of 
\[ V_{P_A}(b_1,b_2)=\left\{\begin{array}{cl}0,&b_1<b_2\\
\frac{1}{4}(b_1-b_2),&b_1>b_2.\end{array}\right.\]
\end{ex}
As demonstrated in the example, the Laplace transform $\widehat{V}_{P_A}({\bf s})$ is homogeneous due to the homogeneity of $V_{P_A}({\bf b})$, and its poles away from $s_i=0$ reflect the lack of continuity of $V_{P_A}({\bf b})$.
\subsection{Lattice points}
If the matrix $A$ is defined over the integers i.e. $A:\bz^N\to\bz^n$ (and $A$ has non-negative entries and non-zero columns), then for $b\in\bz^n$ 
\[ P_A({\bf b}):=\{{\bf x}\in\br_+^N\ |\ A{\bf x}={\bf b}\}\]
is a {\em rational} convex polytope meaning its vertices lie in $\bq^N$.   One can count integral solutions to $A{\bf x}={\bf b}$.  Define
\[N_{P_A}({\bf b}):=\#\{P_A({\bf b})\cap\bz_+^N\}.\]
This is also known as a {\em vector partition function} \cite{StuVec}.  Denote the columns of $A$ by $\alpha_i\in\bz^n$, $i=1,...,N$ and for any ${\bf v}=(v_1,...,v_n)\in\bz^n$ let ${\bf z}^{\bf v}=\prod_iz_i^{v_i}$
The discrete Laplace transform of $N_{P_A}({\bf b})$ is 
\begin{align*}
\widehat{N}_{P_A}({\bf z})&=\sum_{b_i>0} N_{P_A}(b_1,...,b_n)z_1^{b_1}...z_n^{b_n}\\
&=\hspace{-4mm}\sum_{\{x>0:Ax=b\}}\hspace{-4mm} z_1^{b_1}...z_n^{b_n}\\
&=\prod_{i=1}^N\sum_{x_i>0}\left({\bf z}^{\alpha_i}\right)^{x_i}\\
&=\prod_{i=1}^N\frac{{\bf z}^{\alpha_i}}{1-{\bf z}^{\alpha_i}}.
\end{align*}
If one sets $z_i=\exp(-s_i)$ then the discrete Laplace transform looks like a discrete version of the Laplace transform.  The discrete Laplace transform of a polynomial has poles only at $z_i=1$ so the poles at, say $z_i=1/z_j$ reflect the piecewise behaviour and poles at $z_i=-1$ represent quasipolynomial behaviour.
\addtocounter{ex}{-1}
\begin{ex}[{\bf continued}]
\[A=\left(\begin{array}{ccc}1&2&2\\1&0&0\end{array}\right)\]
\begin{eqnarray*}\Rightarrow& N_{P_A}(b_1,b_2)&\equiv 0,\quad b_1-b_2{\rm\ odd}\\
&N_{P_A}(b_1,b_2)&=\left\{\begin{array}{cl}0,&b_1\leq b_2\\
\frac{1}{2}(b_1-b_2)-1,&b_1>b_2\end{array}\right.,\quad b_1-b_2{\rm\ even.}
\end{eqnarray*}
So $N_{P_A}(b_1,b_2)$ is a piecewise defined quasi-polynomial in $\bf b$.  The discrete Laplace transform 
\[ \widehat{N}_{P_A}(z_1,z_2)=\frac{z_1^5z_2}{(1-z_1z_2)(1-z_1^2)^2}\]
is calculated using the columns of the matrix.  It  can be checked by explicitly summing $N_{P_A}(b_1,b_2)$ or more conveniently by checking only the first few terms in a Taylor series around $z_i=0$ using a computer.
\end{ex}
A function of integers $F(a_1,...,a_n)$ is a {\em quasi-polynomial}  if it decomposes into a collection of polynomials that depend on some modular information about the integers $(a_1,...,a_n)$.  More precisely, a quasi-polynomial coincides with a polynomial on each coset of some sublattice of finite index in $\bz^n$.  For example, $F(a)=(1+(-1)^a)\cdot a$ is quasi-polynomial because it decomposes into the two polynomials $2a$, respectively $0$, when $a$ is even, respectively odd.  For a general $A:\bz^N\to\bz^n$, $N_{P_A}(b_1,...,b_n)$ is a {\em piecewise} defined quasi-polynomial in ${\bf b}\in\bz^n$.   It is polynomial on each coset of the sublattice $A\bz^N\subset\bz^n$ where it resembles the volume $V_{P_A}(b_1,...,b_n)$ although it is no longer homogeneous.

The number of lattice points in a polytope is intuitively an approximation for its volume.  The following theorem gives the simplest relationship between number of lattice points and volume.
\begin{thm}[Ehrhart \cite{EhrPol}]  \label{th:ehrhart}
If $P\subset\br^n$ is a dimension $n$ convex polytope with integral vertices and interior $P^0$ define 
\[N_P:=\#\{P\cap\bz^n\},\quad N_P(k):=\#\{kP\cap\bz^n\}.\]
Then $N_P(k)$ is a degree $n$ polynomial in $k$
\[ N_P(k)={\rm Vol}(P)k^n+...+1\]
and
\[N_{P^0}(k)=(-1)^nN_P(-k)={\rm Vol}(P)k^n+...+(-1)^n.\] 
\end{thm}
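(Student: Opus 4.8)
The plan is to reduce to the case of a lattice simplex by triangulation, and to treat the simplex through its generating function $\sum_{k\ge 0}N_\Delta(k)\,t^k$, which—in the spirit of the Laplace transforms used above—linearises the counting problem. First I would triangulate $P$ into finitely many $n$-dimensional simplices $\Delta_1,\dots,\Delta_m$ with vertices in $\bz^n$, using the vertices of $P$. Since ${\rm Vol}$ and each of the lattice-point enumerators $N_{(\cdot)}(k)$ is a \emph{valuation}, i.e.\ additive under subdivision via inclusion--exclusion, summing over the triangulation reduces every assertion for $P$ to the corresponding assertion for a single lattice simplex, the overlaps occurring on faces of smaller dimension which are handled by induction on $n$.

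For a lattice simplex $\Delta$ with vertices $v_0,\dots,v_n\in\bz^n$ I would embed it at height one, setting $w_i=(v_i,1)\in\bz^{n+1}$, and form the simplicial cone $C=\{\sum_i\lambda_i w_i:\lambda_i\ge 0\}$. A lattice point of $C$ whose last coordinate equals $k$ is exactly a lattice point of $k\Delta$, so, writing $\pi$ for the last coordinate,
\[\sum_{k\ge 0}N_\Delta(k)\,t^k=\sum_{p\in C\cap\bz^{n+1}}t^{\,\pi(p)}.\]
Writing each $\lambda_i=\lfloor\lambda_i\rfloor+\{\lambda_i\}$ exhibits every lattice point of $C$ uniquely as a lattice point of the half-open fundamental parallelepiped $\Pi=\{\sum_i\{\lambda_i\}w_i\}$ plus a non-negative integer combination of the $w_i$. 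This factorises the series as
\[\sum_{k\ge 0}N_\Delta(k)\,t^k=\frac{h_\Delta(t)}{(1-t)^{n+1}},\qquad h_\Delta(t)=\sum_{q\in\Pi\cap\bz^{n+1}}t^{\,\pi(q)},\]
with $h_\Delta$ a polynomial of degree $\le n$ and non-negative integer coefficients. Expanding $(1-t)^{-(n+1)}=\sum_k\binom{k+n}{n}t^k$ then shows $N_\Delta(k)$ is a polynomial in $k$.

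To pin down the extreme coefficients I would note that $N_\Delta(k)$ has leading coefficient $h_\Delta(1)/n!$, and that $h_\Delta(1)=\#(\Pi\cap\bz^{n+1})=|\det(w_0,\dots,w_n)|=n!\,{\rm Vol}(\Delta)$, since $\Pi$ is a fundamental domain for the sublattice generated by the $w_i$; this gives leading coefficient ${\rm Vol}(\Delta)$ and hence degree exactly $n$. The constant term is $N_\Delta(0)=\#(\{0\}\cap\bz^n)=1$. Summing the valuation relations over the triangulation upgrades these to ${\rm Vol}(P)$ and $1$ for $P$, the constant term surviving because it computes the Euler characteristic of $P$, which is $1$.

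For the reciprocity law I would use the symmetry of the fundamental parallelepiped: reflecting $\sum_i\{\lambda_i\}w_i$ to $\sum_i(1-\{\lambda_i\})w_i$ identifies the closed and open parallelepipeds and yields $h_{\Delta^0}(t)=t^{\,n+1}h_\Delta(1/t)$, where $h_{\Delta^0}$ enumerates interior lattice points. Substituting into the factorised series gives Stanley's reciprocity for the cone,
\[\sum_{k\ge 1}N_{\Delta^0}(k)\,t^k=(-1)^{n+1}\left.\frac{h_\Delta(t)}{(1-t)^{n+1}}\right|_{t\mapsto 1/t},\]
an identity of rational functions equivalent to $N_{\Delta^0}(k)=(-1)^nN_\Delta(-k)$; the valuation argument again transfers this to $P$. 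I expect the main obstacle to be precisely this last step: making the rational-function manipulation rigorous and, above all, controlling the interior-versus-boundary bookkeeping when the simplex reciprocity is summed over the triangulation, since the lower-dimensional shared faces must cancel exactly for the interior count $N_{P^0}$ of $P$ to emerge.
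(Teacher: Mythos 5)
The paper itself does not prove this theorem: it is quoted from Ehrhart \cite{EhrPol} and used as a black box, so your proposal can only be measured against the standard literature proof --- which is essentially what you have reconstructed. Triangulating, coning a lattice simplex at height one, decomposing the cone by the half-open fundamental parallelepiped to get $\sum_k N_\Delta(k)t^k=h_\Delta(t)/(1-t)^{n+1}$, reading off the leading coefficient from $h_\Delta(1)=n!\,{\rm Vol}(\Delta)$, and deriving simplex reciprocity from the reflection $\lambda_i\mapsto 1-\lambda_i$ of the parallelepiped are all correct, as is identifying the constant term of $N_P$ with $\chi(P)=1$ via inclusion--exclusion over the triangulation. (The step converting the rational-function identity into $N_{\Delta^0}(k)=(-1)^nN_\Delta(-k)$ rests on the standard lemma that $\sum_{k\ge0}p(k)t^k$, evaluated at $1/t$, equals $-\sum_{k\ge1}p(-k)t^k$ for any polynomial $p$; this needs proof but is routine on the binomial basis.)

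The genuine gap is the sentence ``the valuation argument again transfers this to $P$,'' and you were right to single it out as the weak point. Inclusion--exclusion over closed simplices is what transfers polynomiality, the leading term, and the constant term; it does \emph{not} transfer reciprocity. If you substitute $-k$ into the inclusion--exclusion identity and apply simplex reciprocity termwise, each face $F$ of the triangulation contributes $(-1)^{\dim F}N_{F^0}(k)$ with a sign depending on its own dimension, and these do not visibly recombine into $N_{P^0}(k)$. The correct bookkeeping is: (i) write $P$ and $P^0$ as \emph{disjoint} unions of relative interiors of faces of the triangulation, so $N_P(k)=\sum_F N_{F^0}(k)$ and $N_{P^0}(k)=\sum_{F\not\subset\partial P}N_{F^0}(k)$; (ii) apply reciprocity to each face; (iii) reduce the desired identity $N_{P^0}(k)=(-1)^nN_P(-k)$ to the combinatorial claim that for each face $G$, the sum $\sum_{F\supseteq G,\ F\not\subset\partial P}(-1)^{n-\dim F}$ equals $1$. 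That claim is an Euler-characteristic computation for the link of $G$, which is an $(n-\dim G-1)$-sphere when $G\not\subset\partial P$ and a ball when $G\subset\partial P$; this topological input (or a substitute such as half-open decompositions of the triangulation, or M\"obius inversion over the face poset together with the Euler relation) is exactly what is missing. Without it your argument proves the theorem only for simplices; since you flagged this yourself, the diagnosis is right, but the proof is not finished there.
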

The constant term gives the Euler characteristic $\chi(P)$, respectively   $\chi(P^0)$, (where $\chi[0,1]=1$ and $\chi(0,1)=-1$.)
\begin{ex}
\[ P=\begin{array}[b]{cc}\bullet&\\\bullet&\bullet\end{array},\quad
     kP=\begin{array}[b]{cccc}\bullet&&&\\\bullet&\bullet&&\\\bullet&\bullet&\bullet&\\
     \bullet&\bullet&\bullet&\bullet\end{array}\]
     \[N_P(k)=\frac{1}{2}(k+1)(k+2)\]
\end{ex}
A similar relationship holds between the number of lattice points and quotient volume of a convex polytope $P_A({\bf b}):=\{{\bf x}\in\br_+^N\ |\ A{\bf x}={\bf b}\}$  defined by a linear map of full rank $A:\bz^N\to\bz^n$ and $b\in\bz^n$.  If $A$ is not surjective then there exists a rational matrix $B$ that defines an isomorphism from $A\cdot\bz^N$ onto $\bz^n$ and we define ${\rm ind}_A=1/\det B$.  More invariantly:
\begin{defn}
For an integer matrix $A:\bz^N\to\bz^n$ define 
\[{\rm ind}_A={\rm\ index\ of\ the\ sublattice\ }A\cdot\bz^N\subset\bz^n.\]
\end{defn}
From analysing the Laplace transform or by replacing $A$ with $BA$ so that ${\rm ind}_{BA}=1$ we get the analogue of Theorem~\ref{th:ehrhart}.  For $b\in A\cdot\bz^N$,
\[ N_{P_A}({\bf b})={\rm ind}_A\cdot V_{P_A}({\bf b})+{\rm\ lower\ degree\ terms.}\]
In general the constant term is not $\pm 1$ as is shown in the next example.
\begin{ex}  \label{ex:notpm1}
\[A=\left(\begin{array}{cccc}1&1&2&0\\1&1&0&2\end{array}\right)\]
For $b_1$ and $b_2$ odd,
\[N_{P_A}(b_1,b_2)=\left\{\begin{array}{cl}\frac{1}{4}b_1^2-b_1+\frac{3}{4},&\quad b_1\leq b_2\\
\frac{1}{4}b_2^2-b_2+\frac{3}{4},&\quad b_1\geq b_2.\end{array}\right.\]
\end{ex}

\subsection{Convex polytopes from fatgraphs}

Given a fatgraph $\Gamma$ its {\em incidence matrix}  $A_{\Gamma}$ is defined by:
\begin{align*}
A_{\Gamma}\quad:\quad\br^{e(\Gamma)}&\to\quad\br^n\\
{\rm edge\ }&\mapsto {\rm\ incident\ boundary\ components.}
\end{align*}

\begin{ex}
Let $\Gamma$ and $\Gamma'$ be the genus 0 and genus 1 fatgraphs in the diagram below.  Then their respective incidence matrices are
\[ A_{\Gamma}=\left[\begin{array}{ccc}1&1&0\\1&0&1\\0&1&1\end{array}\right],\quad A_{\Gamma'}=[2\quad 2\quad 2]\]
\begin{figure}[ht]  
	\centerline{\includegraphics[height=2.5cm]{fats.pdf}}
\end{figure}
\end{ex}
The cell decomposition of $\modm_{g,n}^{\rm combinatorial}(b_1,...,b_n)$ which induces a cell decomposition of $\modm_{g,n}$ uses cells defined in Definition~\ref{def:cell} which consist of metric labeled fatgraphs with boundary lengths $(b_1,...,b_n)$.  But these are just the convex polytopes defined by the incidence matrices of fatgraphs.  In other words
\[P_{\Gamma}(b_1,...,b_n)=P_{A_{\Gamma}}(b_1,...,b_n)=\{{\bf x}\in\br_+^{e(\Gamma)}|A_{\Gamma}{\bf x}={\bf b}\}.\]  
Each $P_{\Gamma}(b_1,...,b_n)$ is a convex polytope which is the interior of a compact convex polytope.  The closure of $P_{\Gamma}(b_1,...,b_n)$ consists of solutions $A_{\Gamma}{\bf x}={\bf b}$ where some $x_i=0$.  These solutions can be identified with points of $P_{\Gamma'}(b_1,...,b_n)$ where the fatgraph $\Gamma'$ is obtained from $\Gamma$ by contracting edges, together with points that cannot be identified with a fatgraph, but instead correspond to points in the compactification of the moduli space \cite{DNoCou}.

Define the volume of the cell $P_{\Gamma}$ using the quotient volume $V_{\Gamma}=V_{P_{A_{\Gamma}}}$.  Kontsevich \cite{KonInt} defined the total volume of $\modm_{g,n}^{\rm combinatorial}(b_1,...,b_n)$
\begin{defn}
$V_{g,n}(b_1,...,b_n)=\displaystyle\sum_{\Gamma\in \fat}\frac{1}{|Aut \Gamma|}V_{\Gamma}(b_1,...,b_n).$
\end{defn}
\begin{table}[ht]  \label{tab:vol}
\caption{Kontsevich volumes}
\begin{spacing}{1.4}  
\begin{tabular}{||l|c|c||} 
\hline\hline

{\bf g} &{\bf n}&$V_{g,n}(b_1,...,b_n)$\\ \hline

0&3&$\frac{1}{2}$\\ \hline
1&1&$\frac{1}{96}b_1^2$\\ \hline
0&4&$\frac{1}{8}\left(b_1^2+b_2^2+b_3^2+b_4^2\right)$\\ \hline
1&2&$\frac{1}{2^83}\left(b_1^2+b_2^2\right)^2$\\ \hline
2&1&$\frac{1}{2^{17}3^3}b_1^8$\\
\hline\hline
\end{tabular} 
\end{spacing}
\end{table}

The quotient volume yields $V_{0,3}(b_1,b_2,b_3)=1/2$ reflecting the fact that the cell decompositions of $\modm_{0,3}$, which is just  single point, use matrices of determinant 2.  Only the trivalent fatgraphs contribute to the sum.  The Laplace transform of $V_{g,n}$ appears as $I_g$ in \cite{KonInt}.  It is a non-trivial fact for $n>1$ that $V_{g,n}(b_1,...,b_n)$ is actually a polynomial in the $b_i$.  It is a sum of the piecewise polynomials $V_{\Gamma}(b_1,...,b_n)$.  
\begin{lemma}  \label{th:chambers}
Each piecewise polynomial $V_{\Gamma}(b_1,...,b_n)$ is polynomial on each connected component, or chamber, of the complement of the $2^{n-1}$ hyperplanes
\[\{b_1\pm b_2\pm...\pm b_n=0\}.\]
\end{lemma}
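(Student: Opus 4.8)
The plan is to show that every wall across which the piecewise polynomial $V_{\Gamma}=V_{P_{A_{\Gamma}}}$ changes its polynomial form lies on one of the listed hyperplanes. Since $V_{\Gamma}(\mathbf b)$ is the quotient volume of $P_{A_{\Gamma}}(\mathbf b)=\{\mathbf x\in\br_+^{e(\Gamma)}\mid A_{\Gamma}\mathbf x=\mathbf b\}$, its formula can only change when the combinatorial type of this polytope changes, i.e.\ when a vertex (a basic solution $\mathbf x=A_B^{-1}\mathbf b$ for a column-basis $B$) crosses a coordinate wall $x_j=0$. This happens exactly when $\mathbf b$ crosses a hyperplane $H_{\tau}=\mathrm{span}\{\alpha_j:j\in\tau\}$ spanned by an $(n-1)$-element subset $\tau=B\setminus\{j\}$ of the columns. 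So it suffices to control the normals of the hyperplanes $H_{\tau}$.

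First I would record the shape of the columns of $A_{\Gamma}$. Each edge $E$ meets the boundary along its two sides, and each side lies on a single boundary component; hence the column of $E$ is either $\mathbf{e}_i+\mathbf{e}_j$ (the two sides lie on distinct components $i\neq j$) or $2\mathbf{e}_i$ (both sides on the same component $i$). A normal $\mathbf c$ to $H_{\tau}$ therefore satisfies a linear system assembled from these two types: $c_i+c_j=0$ for each column $\mathbf{e}_i+\mathbf{e}_j$ in $\tau$, and $c_i=0$ for each column $2\mathbf{e}_i$ in $\tau$.

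Next I would translate this into graph theory. Encode $\tau$ as a graph on $\{1,\dots,n\}$ with an edge $\{i,j\}$ for each column $\mathbf{e}_i+\mathbf{e}_j$ and a loop at $i$ for each column $2\mathbf{e}_i$. The equations say $\mathbf c$ is constant up to sign on each connected component: across an edge the two values are opposite, while a loop or an odd cycle forces the value $0$. Because the $n-1$ columns are independent the solution space is one dimensional, and one checks that $\mathbf c$ is supported on a single bipartite (hence loop- and odd-cycle-free) tree component, where it takes the values $\pm1$ according to the bipartition, and vanishes elsewhere. When that component is spanning, $\mathbf c=(\pm1,\dots,\pm1)$ and $H_{\tau}=\{\sum_i\epsilon_i b_i=0\}$ with $\epsilon_i\in\{\pm1\}$; since $\mathbf c$ and $-\mathbf c$ give the same hyperplane, these are precisely the $2^{n-1}$ hyperplanes in the statement.

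The main obstacle is the remaining case, in which the tree component is not spanning (or a vertex is isolated), so $\mathbf c$ has a zero entry and $H_{\tau}$ is a \emph{partial} signed sum such as $\{b_2-b_3=0\}$ or a coordinate hyperplane $\{b_m=0\}$. Here I would invoke connectivity of $\Gamma$: its thickening is a connected surface, so the boundary-adjacency graph (the dual graph of $\Gamma$, loops allowed) is connected on all $n$ vertices. I expect this to force any cut $(W_1,\{1,\dots,n\}\setminus W_1)$ realized by such a $\tau$ to be crossed by genuine columns $\mathbf{e}_i+\mathbf{e}_j$ of $A_{\Gamma}$, so that $\pm\mathbf c$ becomes a supporting inequality $\mathbf c\cdot\alpha\geq 0$ for the generating cone $C(A_{\Gamma})=\mathrm{cone}\{\alpha_j\}$ on which $V_{\Gamma}$ is supported; the offending hyperplane then meets only $\partial C(A_{\Gamma})$ and separates no two chambers of positive volume. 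A small fatgraph computation already shows this phenomenon — a self-adjacent edge at a component produces a hyperplane like $b_B=b_C$ that turns out to be exactly a face of $C(A_{\Gamma})$ rather than an interior wall. Making the cone-boundary claim precise in general — equivalently, proving that for a connected fatgraph the only column subsets producing an \emph{interior} wall are the spanning bipartitions — is the delicate step, and I expect the real work to lie there; the preceding linear-algebra and graph-theoretic reductions are routine.
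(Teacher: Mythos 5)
Your argument is correct, and identical to the paper's, up to the step you yourself flag as delicate: the paper also invokes the Brion--Vergne chamber structure, encodes an $(n-1)$-edge subset $\Gamma'\subset\Gamma$ by its dual graph on the $n$ boundary components, and in the spanning-tree case reads off the signed-sum hyperplane from the bipartition. In fact your linear-algebra analysis is \emph{more} careful than the paper's: the paper concludes that the dual graph is a tree from ``$n$ vertices, $n-1$ edges, Euler characteristic $1$,'' a deduction that silently assumes the dual graph is connected. The configuration you isolate (one non-spanning tree component together with odd-unicyclic components, which does have independent columns spanning a hyperplane) is precisely the case the paper's proof never addresses.

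But the resolution you propose for that case is not just unproven, it is false, so the gap is genuine and cannot be closed along the route you sketch. Take the genus $0$ fatgraph $\Gamma$ with $n=4$ boundary components built from the theta graph (vertices $x,y$, edges $a,f,b$; bigon faces $2,3$; outer face $1$) by attaching at $x$ a bridge $e$ ending in a loop $\ell$ whose inside is face $4$. The columns of $A_\Gamma$ are
\[ a=\mathbf{e}_1+\mathbf{e}_2,\quad b=\mathbf{e}_1+\mathbf{e}_3,\quad f=\mathbf{e}_2+\mathbf{e}_3,\quad e=2\mathbf{e}_1,\quad \ell=\mathbf{e}_1+\mathbf{e}_4.\]
The subset $\tau=\{e,f,\ell\}$ meets every boundary component, is independent, and spans $\{b_2=b_3\}$; its dual graph is the disjoint union of the tree on $\{2,3\}$ and a loop-plus-edge on $\{1,4\}$. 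The normal $\mathbf{c}=(0,1,-1,0)$ satisfies $\mathbf{c}\cdot a=1>0$ and $\mathbf{c}\cdot b=-1<0$, so $\{b_2=b_3\}$ passes through the \emph{interior} of the cone generated by the columns of $A_\Gamma$: neither $\mathbf{c}$ nor $-\mathbf{c}$ is a supporting inequality, contrary to what you hope to prove. And this wall is real: solving $A_\Gamma\mathbf{x}=\mathbf{b}$ gives $x_\ell=b_4$, $x_a=b_2-x_f$, $x_b=b_3-x_f$, $2x_e=b_1-b_2-b_3-b_4+2x_f$, so on $\{b_1>b_2+b_3+b_4\}\cap\br_+^4$ --- a convex set on which all eight signed sums $b_1\pm b_2\pm b_3\pm b_4$ are positive, hence contained in a single chamber of the stated arrangement --- the polytope is the segment $x_f\in\bigl(0,\min(b_2,b_3)\bigr)$ and
\[ V_\Gamma(\mathbf{b})=\tfrac{1}{2}\min(b_2,b_3),\]
which is not a polynomial there; splitting the $4$-valent vertex to make $\Gamma$ trivalent leaves the phenomenon intact. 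So the dichotomy you aim for (interior walls come only from spanning bipartitions) fails, and with it the Lemma as stated for an individual $V_\Gamma$: the wall arrangement must be enlarged to include partial signed sums $\sum_{i\in S}\pm b_i=0$, and the extra walls can only cancel after summing over $\Gamma$ (as they must, by Kontsevich's polynomiality of $V_{g,n}$). Your instinct that the real work lies in this case was exactly right; that work is absent from your proposal, and the paper's own proof fails at the same point.
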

\begin{proof}
The chambers are the interiors of maximal cones in the convex cone generated by the columns of $A_{\Gamma}$ \cite{BVeRes} for $\Gamma\in\fat$.  The hyperplanes are boundaries of the maximal cones which are generated by subsets of the columns of $A_{\Gamma}$.  Hence we need to prove that for a sub-fatgraph $\Gamma'\subset\Gamma$, if the columns of $A_{\Gamma'}$ span a hyperplane in $\br^n$ then that hyperplane is either a coordinate plane $b_j=0$ or $b_1\pm b_2\pm...\pm b_n=0$.  

Choose $\Gamma'$ to consist of exactly $n-1$ edges, and consider the two cases when $\Gamma'$ avoids a boundary component, and when $\Gamma'$ meets every boundary component of $\Gamma$.  (A boundary component of a fatgraph is an element of $X/\sigma_2$.)  In the first case, if the columns of $A_{\Gamma'}$ span a hyperplane in $\br^n$ and $\Gamma'$ avoids the $j$th boundary component, then the columns of $A_{\Gamma'}$ lie on the coordinate plane $b_j=0$.  In the second case, if the columns of $A_{\Gamma'}$ span a hyperplane in $\br^n$ and $\Gamma'$ meets all boundary components then the dual graph has $n$ vertices (dual to faces) and $n-1$ edges hence its Euler characteristic is 1 so the dual graph is a tree and, in particular, bipartite.  Choose the bipartite labeling of vertices to be $\pm 1$.  Equivalently, for $\Gamma'\hookrightarrow\Gamma\hookrightarrow\Sigma$ where $\Sigma$ is the surface such that $\Sigma-\Gamma=\sqcup_{j}^nD^2_j$ we have labeled connected regions of $\Sigma-\Gamma'$ with $\pm 1$ so that any edge has different labels on each side.  But then each column of $A_{\Gamma'}$ is orthogonal to the hyperplane $\pm b_1\pm b_2\pm...\pm b_n=0$ where $+$ or $-$ is chosen according to the bipartite labeling described above.
\end{proof}

An element of a non-empty chamber determines the truth of each inequality $b_1\pm b_2\pm...\pm b_n>0$.  Thus a non-empty chamber can be identified with a Boolean function of $n$ variables---the $n$ variables lie in the set $\{0,1\}$---constructed using only AND and OR operations, and such that swapping all 0s for 1s preserves the truth table.  The number of chambers for $n=1,2,3,4,5,...$ is 1,2,4,12,81,...  and appears as the Sloane sequence A001206.  

In fact Kontsevich defined two volumes on $\modm_{g,n}^{\rm combinatorial}(b_1,...,b_n)$.  He defined a second volume via a symplectic form on $\modm_{g,n}^{\rm combinatorial}(b_1,...,b_n)$ and proved that the symplectic volume is a constant multiple of the polytope volume.  He used this to claim that the coefficients of $V_{g,n}(b_1,...,b_n)$ give intersection numbers of Chern classes of the tautological line bundles $L_i$ over the compactified moduli space $\overline{\modm}_{g,n}$. 
\begin{thm}[\cite{KonInt}]  \label{th:Kon}
For $|{\bf d}|=\sum_id_i=3g-3+n$ and ${\bf d}!=\prod d_i!$ the coefficient $c_{\bf d}$ of $b^{2{\bf d}}=\prod b_i^{2d_i}$ in $V_{g,n}(b_1,...,b_n)$ is the intersection number
\[c_{\bf d}=\frac{1}{2^{5g-5+2n}{\bf d}!}\int_{\overline{\modm}_{g,n}}c_1(L_1)^{d_1}...c_1(L_n)^{d_n}.\]
\end{thm}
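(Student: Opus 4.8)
The plan is to follow the two-volume strategy signposted just above the statement: compute the symplectic volume of $\modm_{g,n}^{\rm combinatorial}(b_1,\dots,b_n)$ in two different ways and match the results. First I would set up the comparison form explicitly on the cell complex. On a top-dimensional (trivalent) cell $P_\Gamma(b_1,\dots,b_n)$ the edge lengths serve as coordinates; going around each boundary component $i$ and taking partial sums of the edge lengths met in cyclic order gives local coordinates whose exterior derivatives assemble into Kontsevich's piecewise-linear $2$-form $\Omega$. The first task is then to verify that $\Omega$ is closed, that it glues across the codimension-one faces (where an edge is contracted) to a well-defined orbifold $2$-form on $\modm_{g,n}^{\rm combinatorial}$, and that it restricts to a nondegenerate, i.e.\ symplectic, form on each fibre $\modm_{g,n}^{\rm combinatorial}(b)\cong\modm_{g,n}$.

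Next I would identify the cohomology class of the restricted form. The key is that the map with components $b_i^2/2$ is a moment map for a Hamiltonian $(S^1)^n$ action rotating the $n$ boundary cylinders, so that each fibre $\modm_{g,n}^{\rm combinatorial}(b)$ is the symplectic reduction at level $(b_1^2/2,\dots,b_n^2/2)$. The Duistermaat--Heckman theorem then gives the class of the reduced form as
\[ [\Omega_b]=\tfrac12\sum_{i=1}^n b_i^2\,c_1(L_i)\in H^2\big(\modm_{g,n}\big), \]
where $c_1(L_i)=\psi_i$ is the tautological class. With this in hand the symplectic volume becomes a pure intersection computation: writing $D=3g-3+n$,
\[ {\rm Vol}_\Omega(b)=\int_{\modm_{g,n}^{\rm combinatorial}(b)}\frac{\Omega_b^{D}}{D!}=\frac{1}{2^{D}}\sum_{|{\bf d}|=D}\frac{b^{2{\bf d}}}{{\bf d}!}\int_{\overline{\modm}_{g,n}}c_1(L_1)^{d_1}\cdots c_1(L_n)^{d_n}, \]
where the multinomial coefficients $\binom{D}{\bf d}$ produced by expanding $[\Omega_b]^D$ cancel against $D!$ to leave $1/{\bf d}!$.

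Finally I would relate this symplectic volume back to the polytope volume $V_{g,n}(b)$. On each trivalent cell $P_\Gamma(b)$ both $\Omega_b^{D}/D!$ and the quotient volume form defining $V_\Gamma(b)$ are top-degree forms in the edge-length coordinates, so they differ by a constant; a local computation at the $4g-4+2n$ trivalent vertices of $\Gamma$ shows this constant equals $2^{2g-2+n}$, uniformly over all $\Gamma\in\fat$. Summing over cells with the automorphism weights $1/|{\rm Aut}\,\Gamma|$ yields ${\rm Vol}_\Omega(b)=2^{2g-2+n}V_{g,n}(b)$. Matching the coefficient of $b^{2{\bf d}}$ in the two expressions for the symplectic volume gives $2^{2g-2+n}c_{\bf d}=\frac{1}{2^{D}{\bf d}!}\int\prod_i c_1(L_i)^{d_i}$, and since $D+2g-2+n=5g-5+2n$ this is exactly the claimed formula. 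The main obstacle is the step identifying $[\Omega_b]$ with the $\psi$-classes, together with the justification that integrating the top power over the \emph{non-compact} combinatorial space computes an honest intersection number on the compactification $\overline{\modm}_{g,n}$; this is where the moment-map picture and the extension of the tautological classes across the boundary do the real work, whereas the cell-by-cell constant $2^{2g-2+n}$ is an elementary but indispensable bookkeeping computation.
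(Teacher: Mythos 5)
Your proposal cannot be checked against a proof in the paper, because the paper gives none: Theorem~\ref{th:Kon} is stated as a quotation from \cite{KonInt}, and the text only sketches the strategy in one sentence (a second, symplectic volume on $\modm_{g,n}^{\rm combinatorial}(b_1,...,b_n)$, proved to be a constant multiple of the polytope volume, whose coefficients are then the intersection numbers), pointing to \cite{LooCel} for discussion. Your plan is a correct fleshing-out of exactly that two-volume strategy, and your bookkeeping is internally consistent: with $[\Omega_b]=\tfrac12\sum_i b_i^2\,c_1(L_i)$ and ${\rm Vol}_\Omega(b)=2^{2g-2+n}V_{g,n}(b)$ one gets $2^{3g-3+n}\cdot 2^{2g-2+n}=2^{5g-5+2n}$ as required, and your cell-by-cell constant $2^{2g-2+n}=2^{-\chi}$ matches the factor appearing in Theorem~\ref{th:mirz}, as it must.

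Two remarks on where your route deviates from Kontsevich's and where the real content lies. First, Kontsevich identified the class of his $2$-form with $c_1(L_i)$ by a direct construction, exhibiting $\omega_i$ as the curvature of an explicit connection on the circle bundle whose fibre is the boundary circle of the $i$-th cylinder; your moment-map/Duistermaat--Heckman derivation is a legitimate but later repackaging (in the spirit of \cite{MonTri} and subsequent work), and constructing the Hamiltonian $(S^1)^n$-action of combinatorial twists rigorously on the cell complex, across cell boundaries and orbifold points, is itself a substantial task rather than a routine step. Second, as you yourself flag, the integration takes place over a non-compact space, so knowing the cohomology class of $\Omega_b$ on $\modm_{g,n}$ does not by itself determine an integral that you may then equate with an intersection number on $\overline{\modm}_{g,n}$; justifying the extension of the forms and classes over the compactification is precisely the delicate point in Kontsevich's argument and is the reason the paper cites \cite{LooCel}. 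So: right strategy, consistent constants, but the two steps you defer as ``the real work'' constitute essentially the entire content of the theorem.
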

See \cite{LooCel} for a discussion of this result.

\subsection{Counting lattice points in the moduli space of curves}

For a fatgraph $\Gamma$ and integers $(b_1,...,b_n)$ define the number of lattice points in the rational convex polytope $P_{\Gamma}(b_1,...,b_n)$ by
\[ N_{\Gamma}(b_1,...,b_n):=N_{P_{\Gamma}}(b_1,...,b_n)=\#\{P_{\Gamma}({\bf b})\cap\bz_+^{e(\Gamma)}\}.\]
The number of lattice points $N_{\Gamma}(b_1,...,b_n)$ is piecewise quasi-polynomial and on each chamber, defined in Lemma~\ref{th:chambers},  it is quasi-polynomial.  The number of lattice points in $\modm_{g,n}^{\rm combinatorial}(b_1,...,b_n)$, defined in \cite{NorCou}, is the weighted sum of $N_{\Gamma}$ over all labeled fatgraphs of genus $g$ and $n$ boundary components:
\begin{defn} \label{th:lcp}
$N_{g,n}(b_1,...,b_n)=\displaystyle\sum_{\Gamma\in \fat}\frac{1}{|{\rm Aut\ } \Gamma|}N_{\Gamma}(b_1,...,b_n).$
\end{defn}
All fatgraphs contribute to the sum, unlike $V_{g,n}(b_1,...,b_n)$ where only the trivalent fatgraphs contribute.  Analogously to the volume $V_{g,n}(b_1,...,b_n)$ it is a non-trivial fact that $N_{g,n}(b_1,...,b_n)$ which is a sum of piecewise defined quasi-polynomials is actually a quasi-polynomial.
\begin{thm}[\cite{NorCou}]    \label{th:poly}
The number of lattice points $N_{g,n}(b_1,...,b_n)$ is a symmetric quasi-poly\-nom\-ial of degree $3g-3+n$  in the integers $(b_1^2,...,b_n^2)$ depending on the parity of the $b_i$.
\end{thm}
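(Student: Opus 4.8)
The plan is to establish the four features of $N_{g,n}$ in turn---symmetry, genuine (\emph{chamber-independent}) quasi-polynomiality, dependence only on $(b_1^2,\dots,b_n^2)$ and the parities, and the degree---building on the discrete Laplace transform and the chamber analysis of Lemma~\ref{th:chambers}. The symmetry is immediate and I would dispose of it first: the symmetric group $S_n$ acts on $\fat$ by permuting the boundary labels, this action preserves $|{\rm Aut}\,\Gamma|$ and sends $N_\Gamma(b_1,\dots,b_n)$ to $N_{\sigma\Gamma}(b_{\sigma^{-1}(1)},\dots,b_{\sigma^{-1}(n)})$, and since Definition~\ref{th:lcp} sums over \emph{all} of $\fat$, relabelling merely permutes the summands, so $N_{g,n}$ is symmetric.

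Next I would read off the coarse parity information from the generating function $\widehat N_{g,n}({\bf z})=\sum_{\Gamma\in\fat}\frac{1}{|{\rm Aut}\,\Gamma|}\prod_{E}\frac{{\bf z}^{\alpha_E}}{1-{\bf z}^{\alpha_E}}$. The structural fact I would exploit throughout is that each column $\alpha_E$ of $A_\Gamma$ equals $e_j+e_k$ (with $j=k$ permitted), because every edge has exactly two sides and each side lies on a boundary component; thus every monomial ${\bf z}^{\alpha_E}$ has total degree $2$. Consequently the substitution $z_i\mapsto-z_i$ (all $i$) fixes each factor, hence fixes $\widehat N_{g,n}$, and comparing coefficients gives $N_{g,n}({\bf b})=(-1)^{b_1+\cdots+b_n}N_{g,n}({\bf b})$, so $N_{g,n}$ vanishes unless $\sum b_i$ is even. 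The same degree-$2$ feature shows the factors have poles only along $\{z_jz_k=1\}$ and $\{z_j=\pm1\}$; by the dictionary recorded above, the loci $z_j=\pm1$ produce honest quasi-polynomial behaviour, whereas the \emph{mixed} loci $z_jz_k=1$ are precisely what create the piecewise walls $b_1\pm b_2\pm\cdots\pm b_n=0$ of Lemma~\ref{th:chambers}.

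The heart of the argument---and the step I expect to be the main obstacle---is to upgrade ``quasi-polynomial on each chamber'' to ``quasi-polynomial on all of $\bz_+^n$''. Equivalently, although each $\widehat N_\Gamma$ carries poles along the mixed loci $z_jz_k=1$, I must show these cancel in the weighted sum $\widehat N_{g,n}$, leaving a rational function whose only poles lie on $\{z_i=\pm1\}$; such a function is exactly the transform of a genuine quasi-polynomial. Geometrically the cancellation is forced, since an integer metric fatgraph with prescribed boundary lengths ${\bf b}$ is intrinsic and crossing a wall only replaces one cell of $\modm_{g,n}^{\rm combinatorial}({\bf b})$ by another representing the same decorated surfaces. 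To make this rigorous I would compute the residue of $\widehat N_{g,n}$ along a fixed wall $z_jz_k=1$: it is a sum over fatgraphs carrying a distinguished edge joining boundaries $j$ and $k$, and I would construct a sign-reversing involution on these decorated fatgraphs (contracting or sliding the distinguished edge) that pairs them and annihilates the total residue. This is the lattice-point analogue of Kontsevich's polynomiality of $V_{g,n}$, where the mixed poles $s_j+s_k=0$ of $\prod_E(\alpha_E\cdot{\bf s})^{-1}$ cancel in the sum; the combinatorics of the involution, and the careful bookkeeping of higher-order poles when several edges join $j$ and $k$, is where the real work lies.

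Granting quasi-polynomiality, the degree and the $b_i^2$-dependence follow. The Ehrhart relation $N_{P_A}({\bf b})={\rm ind}_{A}\,V_{P_A}({\bf b})+(\text{lower order})$ identifies the top-degree part of $N_{g,n}$ with $\sum_\Gamma\frac{{\rm ind}_{A_\Gamma}}{|{\rm Aut}\,\Gamma|}V_\Gamma({\bf b})$, which is supported on the trivalent graphs, where $e(\Gamma)=6g-6+3n$ and $\dim P_\Gamma=6g-6+2n$; by Theorem~\ref{th:Kon} this is a polynomial in the $b_i^2$ of degree $3g-3+n$, supplying both the stated degree and the evenness of the leading term. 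For the lower, parity-dependent terms I would apply Ehrhart--Macdonald reciprocity cell by cell: for the open polytope $P_\Gamma({\bf b})$ it gives $N_\Gamma(-{\bf b})=(-1)^{e(\Gamma)-n}\,\overline{N}_\Gamma({\bf b})$, where $\overline{N}_\Gamma$ counts the \emph{closed} polytope and decomposes by inclusion--exclusion over the edge-contractions $\Gamma'\preceq\Gamma$. Summing over $\fat$ with the automorphism weights and reorganising by $\Gamma'$, the signs $(-1)^{e(\Gamma)-n}$ collapse through an Euler-characteristic cancellation of the kind underlying the Penner--Harer--Zagier computations (modulo a correction for the contracted configurations lying in the compactification), yielding $N_{g,n}(-b_1,\dots,-b_n)=N_{g,n}(b_1,\dots,b_n)$ on each parity coset. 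Together with the symmetry and the constraint $\sum b_i\in2\bz$, this exhibits $N_{g,n}$ as a symmetric quasi-polynomial of degree $3g-3+n$ in $(b_1^2,\dots,b_n^2)$ depending only on the parities of the $b_i$.
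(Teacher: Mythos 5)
Your symmetry argument, the observation that every column of $A_\Gamma$ has the form $e_j+e_k$ (so that $N_{g,n}$ vanishes unless $\sum b_i$ is even), and the identification of the top-degree part with $2V_{g,n}$ via Theorem~\ref{th:latvol} are all sound. The genuine gap is exactly the step you yourself call the heart of the argument: the cancellation, in the weighted sum over $\fat$, of the poles of $\widehat{N}_\Gamma$ along the mixed loci $z_jz_k=1$. That cancellation is logically equivalent to the theorem being proved, and you do not prove it---the sign-reversing involution is never constructed, and your geometric justification that the cancellation is ``forced'' because the lattice-point count is intrinsic does not work. Intrinsicness (the Belyi--Hurwitz description) rules out discontinuity of $N_{g,n}$ across the walls of Lemma~\ref{th:chambers}, but continuity is far weaker than quasi-polynomiality: each individual $V_\Gamma$ and $N_\Gamma$ is already continuous across walls yet genuinely piecewise. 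The bookkeeping of higher-order poles when several edges join boundaries $j$ and $k$, which you flag as ``where the real work lies,'' is precisely the missing content. Your final paragraph has a second gap of the same kind: Ehrhart--Macdonald reciprocity can only be applied to $N_{g,n}$ once quasi-polynomiality is in hand, and the cell-by-cell version is delicate because the closure of $P_\Gamma({\bf b})$ contains faces that are \emph{not} fatgraph cells but correspond to points of the compactified moduli space (as the paper notes in the subsection on convex polytopes from fatgraphs); the ``Euler-characteristic cancellation \dots modulo a correction'' is asserted, not performed, so the dependence only on $(b_1^2,\dots,b_n^2)$ also remains unproven.

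For comparison, the paper's route (following \cite{NorCou}) avoids the Laplace-transform cancellation entirely: it proves the recursion of Theorem~\ref{th:recurs} by the combinatorial edge/lollipop-removal construction, and quasi-polynomiality in the $b_i^2$, the degree $3g-3+n$, and the parity dependence then follow by induction on $2g-2+n$ from the base cases $N_{0,3}$ and $N_{1,1}$, because the operations on the right-hand side of (\ref{eq:rec})---parity-constrained sums such as $\sum_{p+q=b_i+b_j}pq\,N_{g,n-1}(p,\dots)$---produce $\left(\sum b_i\right)$ times a quasi-polynomial of the required form. So your proposal is not a variant of the paper's proof with details omitted; it is a different strategy whose central step, the mixed-pole cancellation, would have to encode the same combinatorics that the paper packages into the recursion, and as written it assumes the conclusion at that step.
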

The dependence on the parity means that that $N_{g,n}(b_1,...,b_n)$ is polynomial on each coset of $2\bz^n \subset \bz^n$.  By symmetry, we can represent its $2^n$ polynomials by the $n$ polynomials $N^{(k)}_{g,n}(b_1,...,b_n)$, for $k=1,...,n$, symmetric in $b_1,...,b_k$ and $b_{k+1},...,b_n$ corresponding to the first $k$ variables being odd.
\[ N_{g,n}(b_1,...,b_n)=N_{g,n}^{(k)}(b_1,...,b_n),\quad {\rm first\ } k {\rm\ variables\ odd}.\]
If $k$ is odd then $N_{g,n}^{(k)}(b_1,...,b_n)=0$.
\begin{table}[ht]  \label{tab:poly}
\caption{Lattice count polynomials for even $b_i$}
\begin{spacing}{1.4}  
\begin{tabular}{||l|c|c||} 
\hline\hline
{\bf g} &{\bf n}&$N^{(0)}_{g,n}(b_1,...,b_n)$\\ \hline
0&3&1\\ \hline
1&1&$\frac{1}{48}\left(b_1^2-4\right)$\\ \hline
0&4&$\frac{1}{4}\left(b_1^2+b_2^2+b_3^2+b_4^2-4\right)$\\ \hline
1&2&$\frac{1}{384}\left(b_1^2+b_2^2-4\right)\left(b_1^2+b_2^2-8\right)$\\ \hline
2&1&$\frac{1}{2^{16}3^35}\left(b_1^2-4\right)\left(b_1^2-16\right)\left(b_1^2-36\right)\left(5b_1^2-32\right)$\\
\hline\hline
\end{tabular} 
\end{spacing}
\end{table}
For any fatgraph $\Gamma$, its incidence matrix has index ${\rm ind}_{A_{\Gamma}}=2$.  Hence
\begin{thm} \label{th:latvol}
$\displaystyle{N_{g,n}^{(k)}(b_1,...,b_n)=2V_{g,n}(b_1,...,b_n)+}$ lower order terms, ($k$ even.)
\end{thm}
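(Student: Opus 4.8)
The plan is to reduce the statement to the general Ehrhart-type relation recorded just above the definition of $\text{ind}_A$, applied one fatgraph at a time, and then to assemble the pieces using the definitions of $N_{g,n}$ and $V_{g,n}$ together with the fact that $\text{ind}_{A_\Gamma}=2$ for every $\Gamma\in\fat$. Concretely, for each $\Gamma$ I would apply $N_{P_A}(\mathbf{b})=\text{ind}_A\cdot V_{P_A}(\mathbf{b})+\text{lower degree terms}$ to $A=A_\Gamma$; since $\text{ind}_{A_\Gamma}=2$ this gives $N_\Gamma(\mathbf{b})=2V_\Gamma(\mathbf{b})+\text{lower order}$, valid for $\mathbf{b}$ in the image lattice $A_\Gamma\cdot\bz^{e(\Gamma)}\subset\bz^n$.

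The next step is to pin down that image lattice, which is what produces the parity hypothesis. Each column of $A_\Gamma$ lists the boundary components incident to an edge, and since every edge has exactly two sides each column sums to $2$; hence $A_\Gamma\cdot\bz^{e(\Gamma)}\subseteq\{\mathbf{b}:\sum_i b_i\ \text{even}\}$. The latter sublattice has index $2$, and the image also has index $2$, so the two coincide. This is exactly the coset with an even number of odd $b_i$, i.e. the ``$k$ even'' case, which is why the relation is restricted there; for $k$ odd the system $A_\Gamma\mathbf{x}=\mathbf{b}$ has no nonnegative integer solution, consistent with the stated vanishing $N_{g,n}^{(k)}=0$. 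Summing the per-fatgraph relations against the weights $1/|\text{Aut}\,\Gamma|$ then yields, on the $k$-even coset, $N_{g,n}(\mathbf{b})=2\sum_{\Gamma}\frac{1}{|\text{Aut}\,\Gamma|}V_\Gamma(\mathbf{b})+\text{lower order}$, and by definition the sum on the right is $V_{g,n}(\mathbf{b})$.

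The decisive and most delicate step is the degree bookkeeping, which is where I expect the real content to sit. Although $N_{g,n}$ is a sum of the quasi-polynomials $N_\Gamma$ over \emph{all} of $\fat$, whereas $V_{g,n}$ receives top-degree contributions only from trivalent graphs, I must confirm that no non-trivalent cell leaks into the top degree $6g-6+2n$. For this I would invoke the vector-partition-function fact that $N_{P_A}$ has degree equal to $\dim P_A=e(\Gamma)-n$: a non-trivalent fatgraph has $e(\Gamma)<6g-6+3n$, so $N_\Gamma$ has degree strictly below $6g-6+2n$ and is absorbed into the lower-order terms. Since $V_{g,n}$ is homogeneous of degree $6g-6+2n$ (a volume), its defining sum already runs only over trivalent graphs, so the surviving top-degree part of the assembled identity is precisely $2V_{g,n}(\mathbf{b})$, matching the Kontsevich leading coefficients of Theorem~\ref{th:Kon}. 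Everything outside this degree comparison is a direct substitution of $\text{ind}_{A_\Gamma}=2$ into the general relation.
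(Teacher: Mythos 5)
Your proposal is correct and takes essentially the same route as the paper, whose entire proof is the one-line observation that ${\rm ind}_{A_\Gamma}=2$ for every fatgraph, combined with the Ehrhart-type relation $N_{P_A}({\bf b})={\rm ind}_A\cdot V_{P_A}({\bf b})+{}$lower order terms (valid for ${\bf b}\in A\cdot\bz^N$) summed over $\fat$ with weights $1/|{\rm Aut}\,\Gamma|$. The details you supply---identifying $A_\Gamma\cdot\bz^{e(\Gamma)}$ with the even-sum sublattice, which explains the ``$k$ even'' hypothesis and the vanishing for $k$ odd, and checking that non-trivalent fatgraphs contribute only in degree below $6g-6+2n$---are precisely what the paper leaves implicit.
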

An immediate corollary of Theorems~\ref{th:Kon} and \ref{th:latvol} is an identification of the top degree coefficients of $N_{g,n}(b_1,...,b_n)$ with intersection numbers on the moduli space.
\begin{cor}  \label{th:int}
For $|{\bf d}|=\sum_id_i=3g-3+n$ and ${\bf d}!=\prod d_i!$ the coefficient $c_{\bf d}$ of $b^{2{\bf d}}=\prod b_i^{2d_i}$ in $N_{g,n}^{(k)}(b_1,...,b_n)$ for $k$ even, is the intersection number
\[c_{\bf d}=\frac{1}{2^{6g-6+2n-g}{\bf d}!}\int_{\overline{\modm}_{g,n}}c_1(L_1)^{d_1}...c_1(L_n)^{d_n}.\]
\end{cor}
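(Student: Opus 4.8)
The plan is to obtain the statement by reading off the leading coefficients of $N_{g,n}^{(k)}$ from Theorems~\ref{th:Kon} and~\ref{th:latvol}. Since the result is billed as an immediate corollary, the work is essentially bookkeeping: identify the coefficients $c_{\mathbf{d}}$ as \emph{top-degree} coefficients, and then combine the two theorems.

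First I would pin down that the monomials in question are the top-degree ones. By Theorem~\ref{th:poly} the polynomial $N_{g,n}^{(k)}$ has degree $3g-3+n$ in $(b_1^2,\ldots,b_n^2)$, that is, degree $6g-6+2n$ in the $b_i$, and Kontsevich's volume $V_{g,n}$ is \emph{homogeneous} of exactly this degree (as the tabulated values confirm). Hence the multi-indices $\mathbf{d}$ with $|\mathbf{d}| = 3g-3+n$ index precisely the top-degree monomials $b^{2\mathbf{d}} = \prod_i b_i^{2d_i}$, and every nonzero coefficient of $V_{g,n}$ is of this form. In particular the ``lower order terms'' occurring in Theorem~\ref{th:latvol} have degree strictly below $6g-6+2n$ and contribute nothing to the coefficient of any such $b^{2\mathbf{d}}$.

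Next I would make the numerical comparison. For $k$ even, Theorem~\ref{th:latvol} asserts that the degree-$(6g-6+2n)$ part of $N_{g,n}^{(k)}$ equals $2V_{g,n}$; therefore the coefficient of each $b^{2\mathbf{d}}$ with $|\mathbf{d}| = 3g-3+n$ in $N_{g,n}^{(k)}$ is exactly twice the corresponding coefficient in $V_{g,n}$. Substituting the expression for the latter supplied by Theorem~\ref{th:Kon} and absorbing the factor of two into the power of $2$ yields
\[
c_{\mathbf{d}} = \frac{2}{2^{5g-5+2n}\,\mathbf{d}!}\int_{\overline{\modm}_{g,n}} c_1(L_1)^{d_1}\cdots c_1(L_n)^{d_n} = \frac{1}{2^{5g-6+2n}\,\mathbf{d}!}\int_{\overline{\modm}_{g,n}} c_1(L_1)^{d_1}\cdots c_1(L_n)^{d_n}.
\]

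There is no genuine obstacle here, which is exactly why the statement is an immediate corollary; the only step deserving care is the exponent arithmetic. I would verify the identity $5g-6+2n = 6g-6+2n-g$, so that the normalising constant reads $2^{-(6g-6+2n-g)}$ precisely as claimed, and I would double-check the degree count so that ``top degree'' in Theorem~\ref{th:latvol} really selects exactly the multi-indices with $|\mathbf{d}| = 3g-3+n$ and the single factor of $2$ shifts Kontsevich's exponent down by precisely one.
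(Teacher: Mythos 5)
Your proposal is correct and is exactly the paper's (unwritten) argument: the paper presents this as an immediate consequence of Theorems~\ref{th:Kon} and~\ref{th:latvol}, obtained by doubling Kontsevich's coefficient and rewriting $2/2^{5g-5+2n}=2^{-(5g-6+2n)}=2^{-(6g-6+2n-g)}$. Your added care in checking that the top-degree part of $N_{g,n}^{(k)}$ is precisely degree $6g-6+2n$ (via Theorem~\ref{th:poly}) and that the lower order terms cannot contribute to any $b^{2\mathbf{d}}$ with $|\mathbf{d}|=3g-3+n$ is the right bookkeeping and matches the intended reading.
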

When all $b_i$ are even, if $P_{\Gamma}(b_1,...,b_n)$ is non-empty then it contains interior integer points and the constant term of $N_{\Gamma}(b_1,...,b_n)$ equals $\chi(P_{\Gamma})$.  Both of these can fail when some $b_i$ are odd.  For example, $N_{0,4}^{(2)}(b_1,b_2,b_3,b_4)=\frac{1}{4}\sum b_i^2-\frac{1}{2}$ and since ${\rm Aut\ }\Gamma=\{1\}$ in genus 0 then one of the $N_{\Gamma}(b_1,...,b_4)$ has constant term which is non-integral and hence not $\pm1$.  This good behaviour for even $b_i$ yields:
\begin{thm}[\cite{NorCou}]  \label{th:euler}
$N_{g,n}(0,...,0)=\chi\left(\modm_{g,n}\right)$.
\end{thm}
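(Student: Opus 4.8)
The plan is to evaluate the quasi-polynomial $N_{g,n}$ at the origin by restricting to a lattice ray inside a single chamber, applying Ehrhart reciprocity cell-by-cell, and then recognising the resulting weighted alternating sum of cell dimensions as the orbifold Euler characteristic coming from the cell decomposition (\ref{eq:cell}). Because $0$ is even, the subtlety that the individual $N_\Gamma$ are only \emph{piecewise} quasi-polynomial is handled by Theorem~\ref{th:poly}: the full sum $N_{g,n}$ is a genuine quasi-polynomial, polynomial on each parity coset, so $N_{g,n}(0,\dots,0)$ is the unambiguous value at the origin of the even polynomial $N_{g,n}^{(0)}$.

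First I would fix an even lattice vector $\mathbf{b}_0\in 2\bz^n_+$ in the interior of a single chamber (Lemma~\ref{th:chambers}) and restrict to the ray $\mathbf{b}=t\mathbf{b}_0$. Along this ray $t\mapsto N_{g,n}(t\mathbf{b}_0)=\sum_{\Gamma\in\fat}\frac{1}{|\mathrm{Aut}\,\Gamma|}N_\Gamma(t\mathbf{b}_0)$ is a polynomial in $t$, and for each $\Gamma$ the term $N_\Gamma(t\mathbf{b}_0)$ counts the strictly positive integer points of the dilate $t\,P_\Gamma(\mathbf{b}_0)$; that is, it is the open (relative-interior) Ehrhart quasi-polynomial of the rational polytope $P_\Gamma(\mathbf{b}_0)$.

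Next I would invoke the open version of Ehrhart's theorem, namely Ehrhart--Macdonald reciprocity, which is the rational-polytope form of the second displayed identity in Theorem~\ref{th:ehrhart}. Its content here is that the constant term of the open Ehrhart quasi-polynomial of a nonempty rational polytope $P$ equals $(-1)^{\dim P}$, while for an empty cell the quasi-polynomial is identically $0$. Since $A_\Gamma$ has full rank $n$ (its index is defined and equals $2$, as recorded just before Theorem~\ref{th:latvol}), every nonempty $P_\Gamma(\mathbf{b}_0)$ has dimension $e(\Gamma)-n$. Evaluating at $t=0$ therefore gives $N_\Gamma(0,\dots,0)=(-1)^{e(\Gamma)-n}$ for the cells nonempty in the chosen chamber and $0$ otherwise, so that
\[
N_{g,n}(0,\dots,0)=\sum_{\Gamma:\,P_\Gamma(\mathbf{b}_0)\neq\emptyset}\frac{(-1)^{e(\Gamma)-n}}{|\mathrm{Aut}\,\Gamma|}.
\]
Finally, the fatgraphs with $P_\Gamma(\mathbf{b}_0)\neq\emptyset$ are exactly those indexing the cells of the induced decomposition of the fibre $\modm_{g,n}^{\rm combinatorial}(\mathbf{b}_0)\cong\modm_{g,n}$, each of dimension $e(\Gamma)-n$ and with orbifold automorphism group $\mathrm{Aut}\,\Gamma$. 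The right-hand side is thus the weighted alternating count of cells, which is precisely the orbifold Euler characteristic of $\modm_{g,n}$ furnished by (\ref{eq:cell}), identifying $N_{g,n}(0,\dots,0)$ with $\chi(\modm_{g,n})$.

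The hard part will be the bookkeeping in the middle step: making the passage from the counting function to Ehrhart reciprocity rigorous for these rational, chamber-dependent polytopes, and in particular verifying that the open constant term is the congruence-independent value $(-1)^{\dim}$ and that empty cells drop out cleanly. A related point requiring care is that the answer must be independent of the chamber chosen for $\mathbf{b}_0$ even though different chambers see different families of nonempty cells; this independence is exactly what is guaranteed by $N_{g,n}$ being a genuine (not merely piecewise) quasi-polynomial. A secondary issue is matching the weighting $1/|\mathrm{Aut}\,\Gamma|$ with the orbifold structure so that the alternating sum computes the orbifold rather than the naive topological Euler characteristic—consistent with values such as $N_{1,1}(0)=-\tfrac{1}{12}$ read off from the table.
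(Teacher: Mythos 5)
Your proposal is correct and is essentially the paper's own argument: the paragraph immediately preceding Theorem~\ref{th:euler} records precisely your key step---that for even $b_i$ the constant term of each nonempty cell's count $N_{\Gamma}$ equals $\chi(P_{\Gamma})=(-1)^{\dim P_{\Gamma}}$, i.e.\ rational Ehrhart reciprocity in the spirit of Theorem~\ref{th:ehrhart}---and the theorem then follows by summing over the cells of (\ref{eq:cell}) with weights $1/|{\rm Aut\ }\Gamma|$, which is the orbifold Euler characteristic (exactly as the paper does explicitly in the $n=1$ case via $N_{g,1}(0)=\sum_{\Gamma}(-1)^{e(\Gamma)-1}/|{\rm Aut\ }\Gamma|$). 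Your restriction to a ray $t\mathbf{b}_0$ through an even point of a chamber is just a concrete packaging of that evaluation at the origin.
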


Kontsevich proved that the tautological intersection numbers, hence the volume polynomials $V_{g,n}(b_1,...,b_n)$, satisfy a recursion relation conjectured by Witten \cite{WitTwo} that uniquely determine them.  The lattice count quasi-polynomials $N_{g,n}(b_1,...,b_n)$ satisfy a recursion relation that uniquely determines the polynomials and when restricted to the top degree terms imply Witten's recursion. 
\begin{thm}   \label{th:recurs}
The lattice count polynomials satisfy the following recursion relation which determines the polynomials uniquely from $N_{0,3}$ and $N_{1,1}$.
{\setlength\arraycolsep{2pt} 
\begin{eqnarray}    \label{eq:rec}
\left(\sum_{i=1}^nb_i\right)N_{g,n}(b_1,...,b_n)&=&\sum_{i\neq j}\sum_{p+q=b_i+b_j}pqN_{g,n-1}(p,b_1,..,\hat{b}_i,..,\hat{b}_j,..,b_n)\nonumber\\
&+&\frac{1}{2}\sum_i\sum_{p+q+r=b_i} pqr\biggl[N_{g-1,n+1}(p,q,b_1,..,\hat{b}_i,..,b_n)\\
&&\hspace{20mm}+\hspace{-7mm}\sum_{\begin{array}{c}_{g_1+g_2=g}\\_{I\sqcup J=\{1,..,\hat{i},..,n\}}\end{array}}\hspace{-8mm}N_{g_1,|I|+1}(p,b_I)N_{g_2,|J|+1}(q,b_J)\biggr]\nonumber
\end{eqnarray}}
\end{thm}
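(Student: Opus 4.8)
The plan is to prove the recursion bijectively, by reading every term as an automorphism-weighted (orbifold) count of integer-metric fatgraphs and exhibiting a length- and topology-controlled surgery. Recall from Definition~\ref{th:lcp} that $N_{g,n}(b_1,\dots,b_n)$ is $\sum_{\Gamma\in\fat}|\mathrm{Aut}\,\Gamma|^{-1}$ times the number of lattice points of $P_\Gamma(b)$, i.e.\ the number of assignments of positive integer lengths to the edges of $\Gamma$ making the boundary cycles have lengths $b_1,\dots,b_n$. The entry point is the identity $\sum_i b_i=2\sum_E \ell_E$, valid on any metric fatgraph because each edge is traversed exactly twice by the boundary cycles (once per half-edge). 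Hence the left-hand side $(\sum_i b_i)N_{g,n}(b)$ is, up to the factor $2$, the weighted count of pairs consisting of a metric fatgraph together with one marked integer point on its boundary.

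First I would let the marked point single out the edge $E$ it sits on and remove $E$. Removing an edge of a fatgraph falls into exactly three topological types, which will produce the three right-hand terms: (i) the two sides of $E$ lie on distinct boundary components $i\ne j$, and removal merges them, $(g,n)\mapsto(g,n-1)$; (ii) both sides lie on a single boundary $i$ and the removal is non-separating, splitting that boundary and dropping the genus, $(g,n)\mapsto(g-1,n+1)$; (iii) the same, but separating, so the surface breaks into pieces of type $(g_1,|I|+1)$ and $(g_2,|J|+1)$ with $g_1+g_2=g$ and $I\sqcup J=\{1,\dots,\hat i,\dots,n\}$. A one-line Euler-characteristic count $V-E+F=2-2g$ confirms that each case lands in the asserted moduli type, so the three cases correspond respectively to the $N_{g,n-1}$ term, the $N_{g-1,n+1}$ term, and the $\sum_{g_1+g_2=g}N_{g_1,\cdot}N_{g_2,\cdot}$ term.

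Next I would track lengths and weights. When $E$ is removed the boundary arcs that abutted its two sides recombine into the new boundary components, and $p$ (and $q$, or $q,r$) record how the old length $b_i+b_j$, respectively $b_i$, is redistributed; this is the origin of the ranges $\sum_{p+q=b_i+b_j}$ and $\sum_{p+q+r=b_i}$. The delicate point is that the single marked point on the left must balance against the \emph{multi-factor} weights $pq$ and $pqr$ on the right: this is achieved not by a naive deletion but by a reconstruction (gluing) correspondence, in which $p,q,r$ count the freedom to re-attach a simple ``kernel'' piece and re-place the marked integer data along the newly exposed boundary arcs—the discrete analogue of the integral kernels in the continuous Weil--Petersson recursion. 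The overall factor $2$ from $\sum_i b_i=2\sum_E\ell_E$ and the factor $\tfrac12$ in the second line are absorbed by the $p\leftrightarrow q$ (and $(g_1,I)\leftrightarrow(g_2,J)$) symmetry of the self-edge cases. Crucially, the orbifold weights $|\mathrm{Aut}\,\Gamma|^{-1}$ transport correctly because, as already noted in the text, an automorphism fixing an oriented edge is trivial; hence a marked metric fatgraph has no nontrivial automorphisms and the two sides can be matched orbit by orbit.

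I expect the genuine obstacle to be precisely this weight-matching together with the degenerate configurations: when $E$ is a loop, when removing $E$ drops a vertex below valency $3$ (forcing a simultaneous contraction/re-expansion, or an argument that such contributions cancel), and when the marked point coincides with a vertex. Keeping the factors $pq$, $pqr$, the $2$, and the $\tfrac12$ simultaneously consistent across these boundary cases, while verifying the surgery is an honest bijection, is the technical heart. Once the identity is established for all $(b_1,\dots,b_n)\in\bz_+^n$ as an equality of integers, it upgrades to an identity of quasi-polynomials by Theorem~\ref{th:poly}. Uniqueness is then a short induction on the complexity $2g-2+n$: each right-hand term strictly lowers $2g-2+n$, the operator $\sum_i b_i$ is a nonzero polynomial so the quotient $N_{g,n}$ is forced, and the recursion bottoms out at $N_{0,3}$ and $N_{1,1}$.
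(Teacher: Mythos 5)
Your skeleton is in fact the paper's: the identity $\sum_i b_i = 2\sum_E \ell_E$, removal of an edge, three topological types of removal matching the three terms on the right, and triviality of automorphisms fixing an oriented edge to control the orbifold weights. But what you defer as ``the technical heart'' is exactly where the paper's proof does its work, and it is not a list of boundary cases to be checked at the end --- it is resolved by changing \emph{what} gets removed. The paper removes either an edge \emph{or a lollipop} (a loop together with a possibly empty stem attached at a valence-3 vertex), and this enlargement of the admissible subgraphs $\gamma$ is precisely what guarantees that $\Gamma-\gamma$ is again a legal fatgraph with all valences $>2$, so no contraction/re-expansion or cancellation argument is ever needed. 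Dually, the reconstruction in Case 1 has two moves (attach an edge, or attach a lollipop, inside a boundary of length $p$, Figure~\ref{fig:case1}), both feeding the $pq\,N_{g,n-1}$ term. Without the lollipop your case (i)--(iii) analysis genuinely fails: removing a loop based at a valence-3 vertex leaves a valence-1 vertex, and removing an ordinary edge with a valence-3 endpoint leaves a valence-2 vertex, so the output is not an element of any $\fato_{g',n'}$ and none of your three terms receives it.

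The weight bookkeeping is also left schematic where the paper is precise. There is no marked point in the paper's argument (so your worry about the mark hitting a vertex never arises); instead each removal of $\gamma$ is counted with multiplicity $2|\gamma|$ (i.e.\ $q$ times when $|\gamma|=q/2$), so that summing over all admissible $\gamma\subset\Gamma$ produces $(\sum_i b_i)/|{\rm Aut}\,\Gamma|$ copies of $\Gamma$, which is the left-hand side. In the reverse direction the factors $pq$ and $pqr$ are not ``freedom to re-place marked integer data'' but literal attachment counts: $p$ attachment positions on a boundary of length $p$ in Case 1, $pq$ positions in Case 2, times the multiplicity $q$ (resp.\ $r$); transitivity of ${\rm Aut}\,\Gamma'$ on attachment sites and of ${\rm Aut}\,\Gamma$ on copies of $\gamma$ --- both consequences of the fixed-oriented-edge lemma --- is what makes the raw counts $pq/|{\rm Aut}\,\Gamma'|$ descend correctly to $q/|{\rm Aut}\,\Gamma|$ copies of each $\Gamma$, hence to the weighted sums $N$. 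Finally, the $\tfrac12$ is not absorbed by ``$p\leftrightarrow q$ symmetry'' alone: the paper derives it from the index-2 extension $1\to{\rm Aut}\,\Gamma'\to{\rm Aut}^*\Gamma'\to\bz_2\to 1$ given by swapping the two attaching boundary components, which either identifies two distinct fatgraphs $\Gamma'$ or doubles the automorphism group, and both cases must be verified to yield the same factor. Your concluding uniqueness induction on $2g-2+n$ is correct and unproblematic; the gap is that the lollipop device and this attachment/automorphism accounting, which constitute the actual proof, are missing.
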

\begin{proof}
The strategy of the proof is as follows. Construct any $\Gamma\in\fat(\bb_S)$ from smaller fatgraphs by removing from $\Gamma$ a simple subgraph $\gamma$ to get
\[ \Gamma=\Gamma'\cup\gamma.\]
The subgraph $\gamma$ is an edge or a lollipop which are the simplest subgraphs possible so that the remaining fatgraph $\Gamma'$ is legal.  Here a {\em lollipop} is a loop---a single edge with two endpoints identified---union a (possible empty) edge at a valence 3 vertex.  The length of a lollipop is the sum of the lengths of its two edges.  There are two cases for removing an edge or a lollipop from $\Gamma\in\fat(\bb_S)$, shown in Figures~\ref{fig:case1} and \ref{fig:case2}. The broken line signifies $\gamma$, and the remaining fatgraph is $\Gamma-\gamma=\Gamma'\in\fato_{g',n'}(\bb'_{S'})$ for $(g',n')=(g,n-1)$ or $(g-1,n+1)$ or $\Gamma'=\Gamma_1\sqcup\Gamma_2$ for the pair $\Gamma_i\in\fato_{g_i,n_i}(\bb_i)$, $i=1,2$ such that $g_1+g_2=g$ and $n_1+n_2=n+1$. 

In each case, the automorphism groups of $\Gamma'$ and $\Gamma$ act on the construction.  The automorphism group of $\Gamma$ sends $\gamma$ to an isomorphic copy of $\gamma$ in $\Gamma$.  The automorphism group of $\Gamma'$ acts on the locations where the ends of $\gamma$ are attached.  Both actions are transitive, or in other words the subgroup of automorphisms of $\Gamma$ that fix $\gamma$, and the subgroup of automorphisms of $\Gamma'$ that fix the endpoints $\gamma$ are trivial.  This is because if an automorphism fixes the endpoints of $\gamma$ then it fixes an adjacent oriented edge and hence is trivial.

Each fatgraph $\Gamma\in\fat(\bb_S)$ is produced in many ways, one for each edge and lollipop $\gamma\subset\Gamma$. The number of such $\gamma$ is not constant over all $\Gamma\in\fat(\bb_S)$ however a weighted count over the lengths of each $\gamma$ can be arranged to be constant as follows.  Since each half-edge of $\Gamma$ can be assigned a unique boundary component
\[|X|=\sum b_i\]
where we recall that $X$ is the set of oriented edges of $\Gamma$.
We exploit this simple fact by taking each removal of $\gamma$, an edge or lollipop, $q$ times where $\gamma$ has length $q/2$ so that we end up with $(\sum b_i)$ copies of $\Gamma$, if ${\rm Aut\ }\Gamma$ is trivial. More generally, we will explain in each case how to end up with $(\sum b_i)/|{\rm Aut\ }\Gamma|$ copies of $\Gamma$ which is a summand of $(\sum b_i)\cdot N_{g,n}(\bb_S)$, the left hand side of (\ref{eq:rec}). 

\fbox{\em Case 1} Choose a fatgraph $\Gamma'\in\fato_{g,n-1}(p, \bb_{S \setminus \{i,j\}})$ and in Case 1a add an edge of length $q/2$ inside the boundary of length $p$ so that $p+q=b_i+b_j$ as in the first diagram in Figure~\ref{fig:case1}. 

\begin{figure}[ht] 
	\centerline{\includegraphics[scale=0.3]{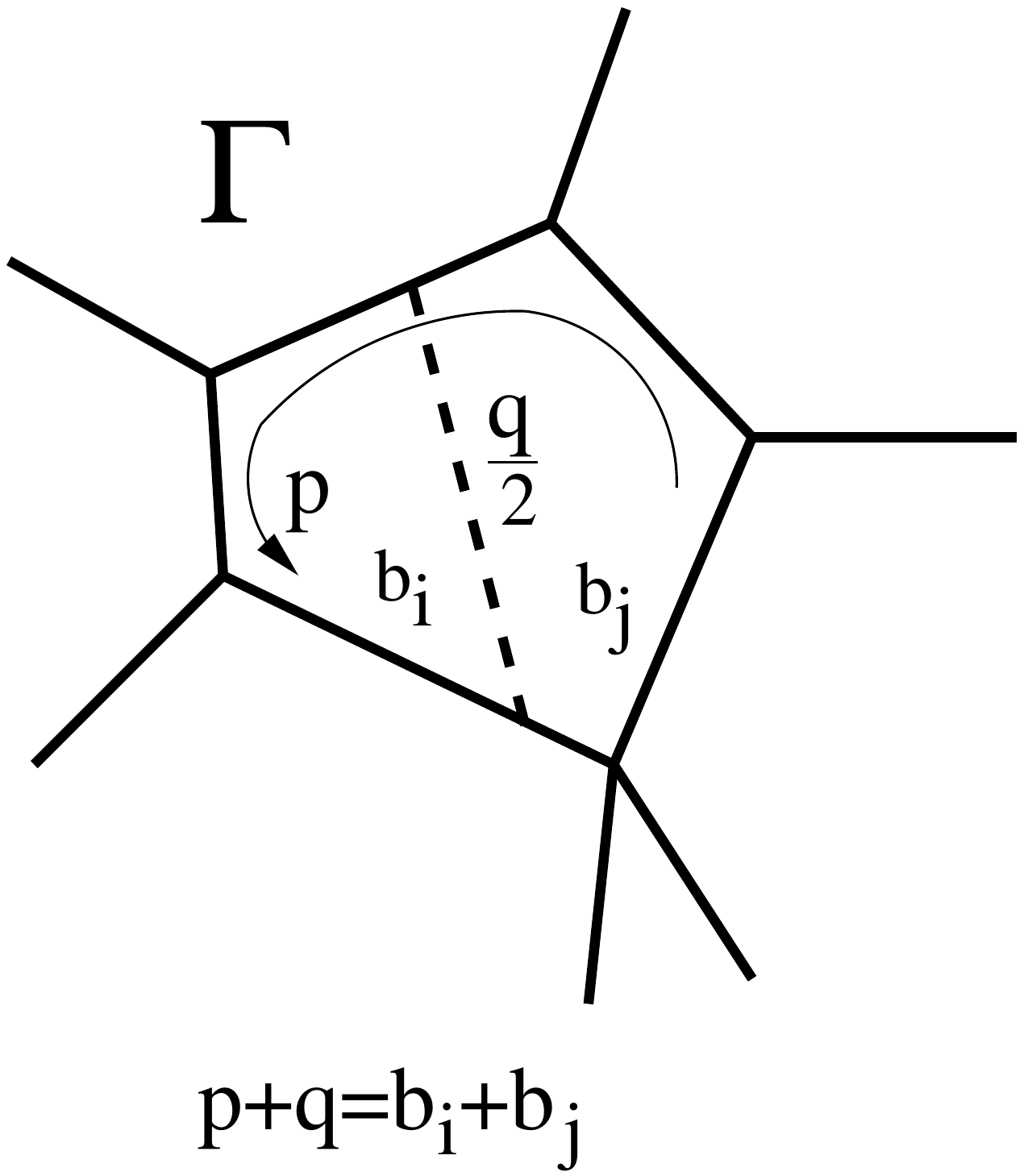} \qquad \qquad \includegraphics[scale=0.3]{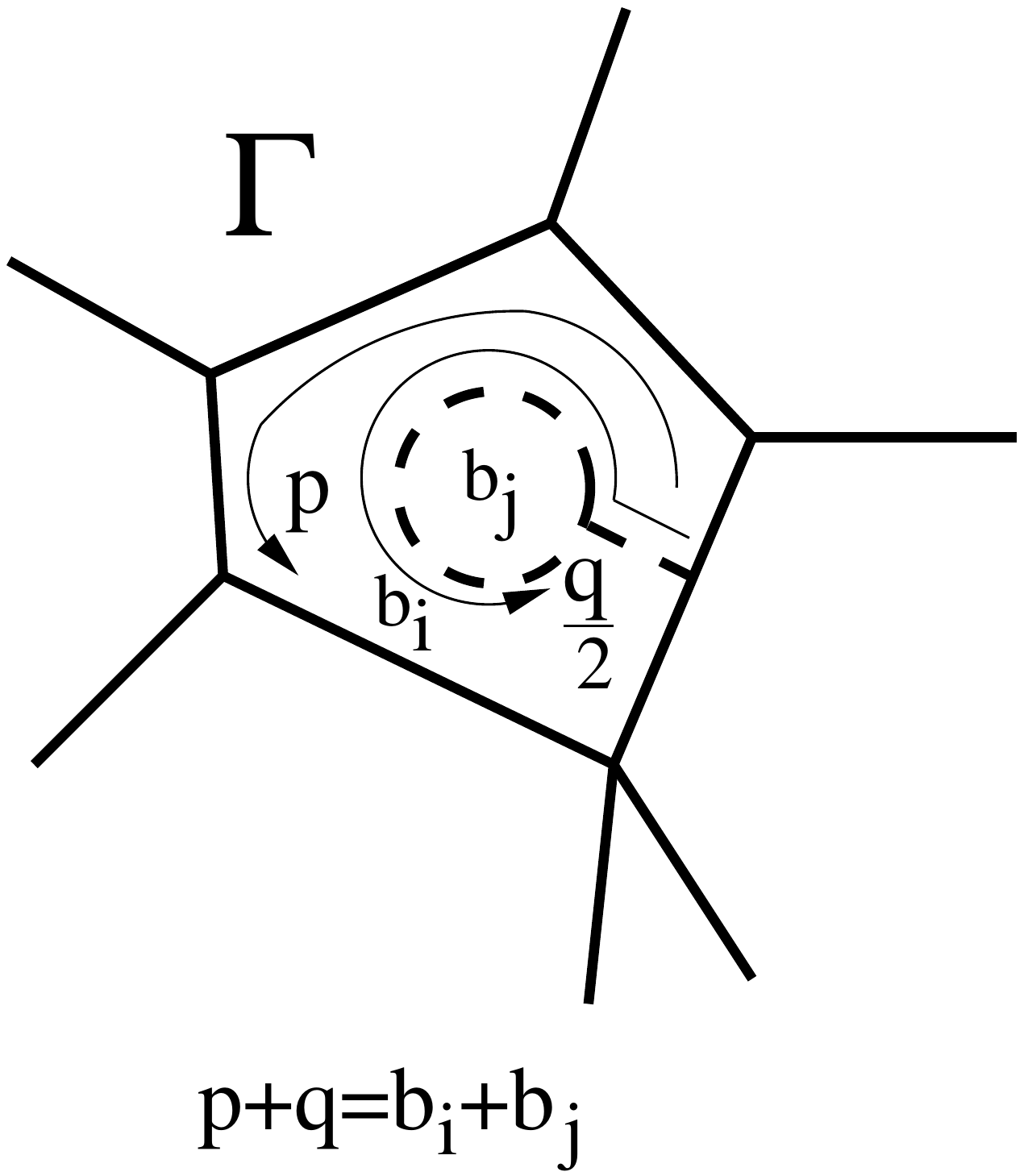}}
	\caption{a. attach edge; b. attach lollipop; to form $\Gamma$.}
	\label{fig:case1}
\end{figure}

In Case 1b attach a lollipop of total length $q/2$ inside the boundary of length $p$ as in the second diagram in Figure~\ref{fig:case1}, again so that $p+q=b_i+b_j$. In both cases for each $\Gamma'$ there are $p$ possible ways to attach the edge, and since the automorphism group of $\Gamma'$ acts transitively on the location where we attach the edge, $q$ copies of this construction produces $pq/|{\rm Aut\ }\Gamma'|$ fatgraphs. For each $\Gamma$ produced from $\Gamma'$ in this way, this construction produces $q/|{\rm Aut\ }\Gamma|$ copies of $\Gamma$.  That is, $pq/|{\rm Aut\ }\Gamma'|$ fatgraphs produce $q/|{\rm Aut\ }\Gamma|$ copies of each $\Gamma$ produced from $\Gamma'$ in this way. Applying this to all $\Gamma'\in\fato_{g,n-1}$ this construction contributes 
 \[pq N_{g,n-1}\left(p, \bb_{S \setminus \{i,j\}}\right)\] 
 to the right hand side of the recursion formula (\ref{eq:rec}) which agrees with a summand.

\fbox{\em Case 2} 
Choose a fatgraph $\Gamma'\in\fato_{g-1,n+1}(p, q, \bb_{S \setminus \{i\}})$ {\em or} $\Gamma'=\Gamma_1\sqcup\Gamma_2$ for $\Gamma_1\in\fato_{g_1,|I_1|+1}(p,\bb_{I_1})$ and $\Gamma_2\in\fato_{g_2,|I_2|+1}(q,\bb_{I_2})$ where $g_1+g_2=g$ and $I_1 \sqcup I_2 = S\setminus \{i\}$. Attach an edge of length $r/2$ connecting these two boundary components as in Figure~\ref{fig:case2} so that $p+q+r=b_i$. 
\begin{figure}[ht] 
	\centerline{\includegraphics[height=4cm]{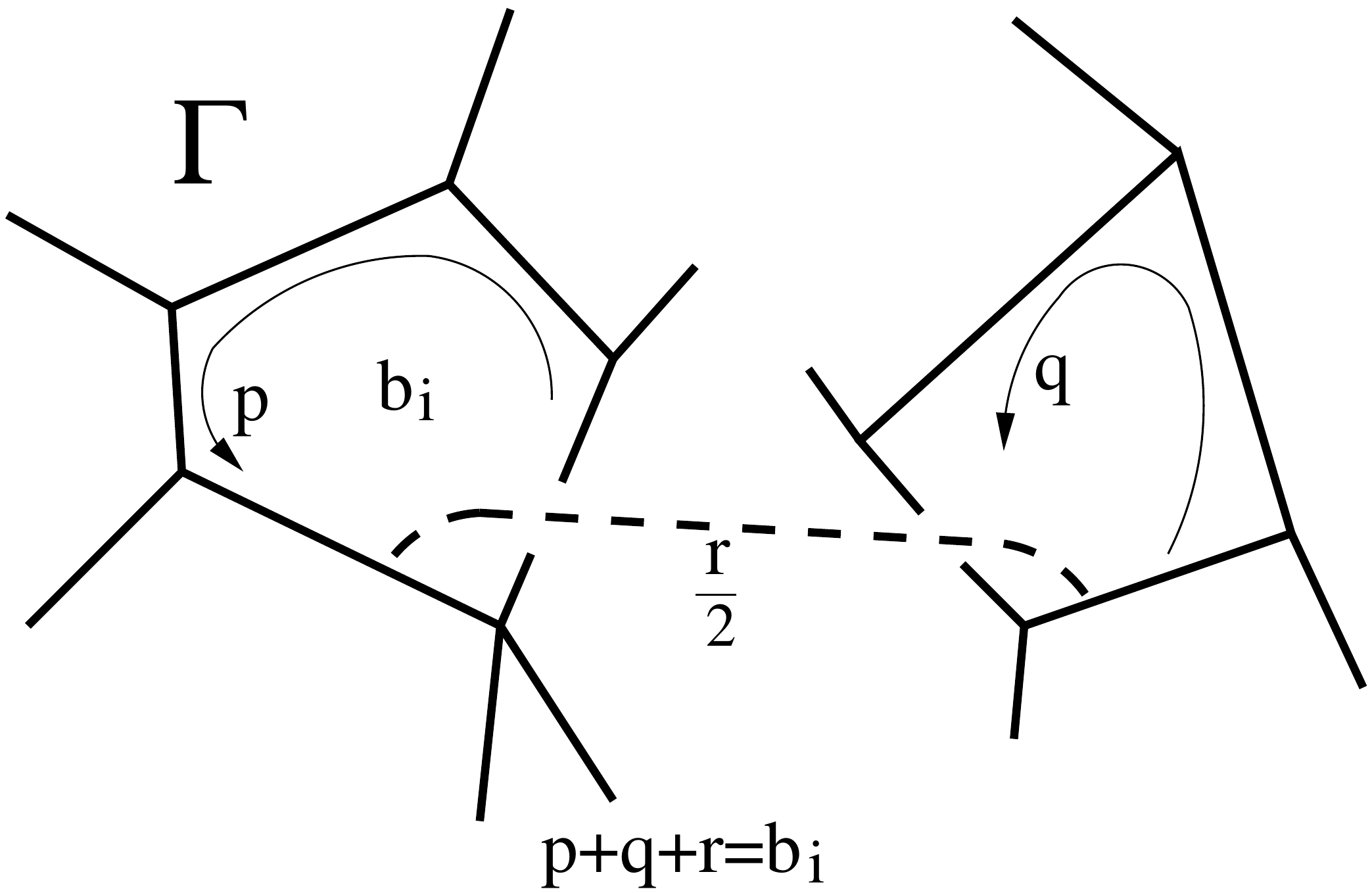}}
	\caption{$\Gamma$ is obtained from a single fatgraph or two disjoint fatgraphs by adding an edge.}
	\label{fig:case2}
\end{figure}

In the diagram, the two boundary components of lengths $p$ and $q$ are part of a fatgraph that may or may not be connected. There are $pq$ possible ways to attach the edge. An enlarged group of isomorphisms between fatgraphs $\Gamma'$ that does not necessarily preserve the labeling of the two attaching boundary components acts here because we can swap the role of the two attaching boundary components. This either identifies two different fatgraphs $\Gamma'$ or produces new automorphisms of $\Gamma'$. In the first case we count only one of them, or more conveniently we count both of them with a weight of $\frac{1}{2}$. Hence $r$ copies of this construction produces $\frac{1}{2}pqr/|{\rm Aut\ }\Gamma'|$ fatgraphs. In the second case, the action of the automorphism group of $\Gamma'$ on the locations where we attach the edges extends to an action of a larger group ${\rm Aut}^*\Gamma'$ that does not necessarily preserve the labeling of the two attaching boundary components.  Thus ${\rm Aut\ }\Gamma'$ is an index 2 subgroup of ${\rm Aut}^*\Gamma'$:
\begin{equation} \label{eq:index2}
1\to{\rm Aut\ }\Gamma'\to{\rm Aut}^*\Gamma'\to\bz_2\to1
\end{equation}
and $r$ copies of this construction produces $pqr/|{\rm Aut}^*\Gamma'|=\frac{1}{2}pqr/|{\rm Aut\ }\Gamma'|$ fatgraphs, so we again count with a weight of $\frac{1}{2}$ as above. For each $\Gamma$ produced from $\Gamma'$ in this way, this construction produces $r/|{\rm Aut\ }\Gamma|$ copies of $\Gamma$.  That is, $\frac{1}{2}pqr/|{\rm Aut\ }\Gamma'|$  fatgraphs produce $r/|{\rm Aut\ }\Gamma|$ copies of each $\Gamma$ produced from $\Gamma'$ in this way. Applying this to all $\Gamma'\in\fato_{g-1,n+1}(p, q, \bb_{S\setminus \{i\}})$ and $\Gamma'=\Gamma_1\sqcup\Gamma_2$ for all $\Gamma_1\in\fato_{g_1,j}(p, \bb_{I_1})$ and $\Gamma_2\in\fato_{g_2,n+1-j}(q, \bb_{I_2})$
this construction contributes 
\[\frac{1}{2}pqr\biggl[ N_{g-1,n+1}(p, q, \bb_{S\setminus \{i\}})
+ \hspace{-5mm}\sum_{\substack{g_1+g_2=g\\I_1 \sqcup I_2 = S\setminus \{i\}}} \hspace{-3mm} N_{g_1,|I_1|+1}(p, \bb_{I_1}) N_{g_2,|I_2|+1}(q, \bb_{I_2})\biggr]\] 
 to the right hand side of the recursion formula (\ref{eq:rec}) which agrees with a summand.

By removing any edge or lollipop from $\Gamma\in\fat(\bb_S)$ we see that it can be produced (many times) using the two constructions above. Each construction produces $\Gamma$ weighted by the factor $2|\gamma|/|{\rm Aut\ }\Gamma|$ where $|\gamma|$ is the length of the edge or lollipop. The sum over $|\gamma|$ for all edges or lollipops $\gamma\subset\Gamma$ yields the number of edges of $\Gamma$ so using $|X|=\sum b_i$ this gives a weight of $(\sum b_i)/|{\rm Aut\ }\Gamma|$ to each $\Gamma\in\fat(\bb_S)$. The weighted sum over all $\Gamma\in\fat(\bb_S)$ is thus $(\sum b_i)N_{g,n}(\bb_S)$ which gives the left hand side of (\ref{eq:rec}) and completes the proof.

\end{proof}

To apply the recursion we need to first calculate $N_{0,3}(b_1,b_2,b_3)$ and $N_{1,1}(b_1)$.  There are seven labeled fatgraphs in $\fato_{0,3}$ coming from three unlabeled fatgraphs.  It is easy to see that $N_{0,3}(b_1,b_2,b_3)=1$ if $b_1+b_2+b_3$ is even (and 0 otherwise.)  This is because for each $(b_1,b_2,b_3)$ there is exactly one of the seven labeled fatgraphs $\Gamma$ with a unique solution of $A_{\Gamma}{\bf x}={\bf b}$ while the other six labeled fatgraphs yield no solutions.  For example, if $b_1>b_2+b_3$ then only the dumbbell fatgraph $\Gamma$ with 
\[A_{\Gamma}=\left(\begin{array}{ccc}2&1&1\\0&1&0\\0&0&1\end{array}\right)\] 
has a solution and that solution is unique.

To calculate $N_{1,1}(b_1)$, note that $A_{\Gamma}=[2\quad 2\quad 2]$ or $[2\quad 2]$ for the 2-vertex and 1-vertex fatgraphs.  Hence
\[ N_{1,1}(b_1)=a_1\binom{\frac{b_1}{2}-1}{2}+a_2\binom{\frac{b_1}{2}-1}{1}\]
where $a_1$ is the number of trivalent fatgraphs (weighted by automorphisms) and $a_2$ is the number of 1-vertex fatgraphs.  The genus 1 graph $\Gamma$ from Figure~\ref{fig:fat} has $|Aut\Gamma|=6$ so $a_1=1/6$, and $a_2$ uses the genus 1 figure 8 fatgraph which has automorphism group $\bz_4$ hence $a_2=1/4$.  Thus
\[ N_{1,1}(b_1)=\frac{1}{6}\binom{\frac{b_1}{2}-1}{2}+\frac{1}{4}\binom{\frac{b_1}{2}-1}{1}=\frac{1}{48}\left(b_1^2-4\right).\]
We can also calculate $N_{1,1}(b_1)$ via edge removal
\[ b_1N_{1,1}(b_1)=\frac{1}{2}\sum_{\begin{array}{c}p+q+p=b\\b{\rm\ even}\end{array}}pq.\]
We will calculate $N_{0,4}[b_1,b_2,b_3,b_4]$ to demonstrate the recursion relation and the parity issue.
\[\left(\sum_{i=1}^4b_i\right)N_{0,4}(b_1,b_2,b_3,b_4)=\sum_{i\neq j}\sum_{\begin{array}{c}\scriptstyle p+q=b_i+b_j\\\scriptstyle q{\rm\ even}\end{array}}pq.\]
If all $b_i$ are even, or all $b_i$ are odd, then $b_i+b_j$ is always even so the sum is over $p$ and $q$ even.  We have
\[ \sum_{i\neq j}\sum_{\begin{array}{c}\scriptstyle p+q=k\\\scriptstyle q{\rm\ even}\end{array}}pq=4\binom{\frac{k}{2}+1}{3}\]
so
\[\left(\sum_{i=1}^4b_i\right)N^{(0)}_{0,4}({\bf b})=\sum_{i\neq j}4\binom{\frac{b_i+b_j}{2}+1}{3}=\left(\sum_{i=1}^4b_i\right)\frac{1}{4}\left(b_1^2+b_2^2+b_3^2+b_4^2-4\right)\]
agreeing with Table~\ref{tab:poly}.  If $b_1$ and $b_2$ are odd and $b_3$ and $b_4$ are even then we need
\[ \sum_{i\neq j}\sum_{\begin{array}{c}\scriptstyle p+q=k\\\scriptstyle q{\rm\ even}\end{array}}pq=\frac{1}{2}\binom{k+1}{3}\]
so
\begin{eqnarray*}
\left(\sum_{i=1}^4b_i\right)N^{(2)}_{0,4}({\bf b})&=&
\hspace{-8mm}\sum_{(i,j)=(1,2){\rm\ or\ }(3,4)}\hspace{-3mm}4\binom{\frac{b_i+b_j}{2}+1}{3}
+\hspace{-3mm}\sum_{(i,j)\neq(1,2){\rm\ or\ }(3,4)}\hspace{-1mm}\frac{1}{2}\binom{b_i+b_j+1}{3}\\
&=&\left(\sum_{i=1}^4b_i\right)\frac{1}{4}\left(b_1^2+b_2^2+b_3^2+b_4^2-2\right)
\end{eqnarray*}
so we see that the polynomial representatives of $N_{0,4}({\bf b})$ agree up to a constant term.\\

The proof of the polynomial behaviour of $N_{g,n}(b_1,...,b_n)$ is different to the proof of the polynomial behaviour of $V_{g,n}(b_1,...,b_n)$.  Kontsevich proved that $V_{g,n}(b_1,...,b_n)$ is a polynomial by identifying its coefficients with intersection numbers over the moduli space.  Whereas, $N_{g,n}(b_1,...,b_n)$ is proven to be quasi-polynomial using the recursion relation given in Theorem~\ref{th:recurs}.  It would be good to have alternative proofs of both of these facts, perhaps via the local behaviour of fatgraph diagrams.  An interpretation of the general coefficients of the quasi-polynomial $N_{g,n}(b_1,...,b_n)$ is not yet known.  In \cite{DNoCou} the ideas here have been extended to define and count lattice points in the moduli space $\overline{\modm}_{g,n}$ of stable genus $g$ curves with $n$ labeled points.  This may lead to an interpretation of the general coefficients of $N_{g,n}(b_1,...,b_n)$.

It would also be interesting to understand a deeper explanation for the fact that $N_{g,n}(b_1,...,b_n)$ is a quasi-polynomial in the squares $b_i^2$.   The volume polynomial $V_{g,n}(b_1,...,b_n)$ is a polynomial in the squares $b_i^2$ because its coefficients are Chern numbers on the moduli space which only appear in even degree.  This also reflects the fact that the moduli space can be constructed in the algebraic category.  The degree $6g-6+2n$ terms of $N_{g,n}(b_1,...,b_n)$ are polynomial in the $b_i^2$ because they coincide with $V_{g,n}(b_1,...,b_n)$.  The degree $6g-7+2n$ terms of $N_{g,n}(b_1,...,b_n)$ are related to volumes of codimension 1 faces.  The cancelation of these terms resembles the behaviour of the fundamental class of an orientable compact manifold.  

\subsection{The $n=1$ case}

The case $n=1$ corresponds to the moduli space of 1-pointed curves $\modm_{g,1}$ and to branched covers of $S^2$ with cyclic ramification over $\infty$ (see the next section.)  The lattice point count quasi-polynomial $N_{g,1}(b)$ is simpler to interpret than in the general case of $n>1$.   This is because the incidence matrix is so simple that the number of lattice points can be calculated explicitly.  For any $\Gamma\in\fato_{g,1}$ the incidence matrix is $A_{\Gamma}=[2,2,...,2]$.  The equation $Ax=b$ has $\binom{\frac{b}{2}-1}{e(\Gamma)-1}$ positive integral solutions when $b$ is even (and no solutions when $b$ is odd.)  Hence
\[N_{g,1}(b)=c_{6g-3}^{(g)}\binom{\frac{b}{2}-1}{6g-4}+c_{6g-4}^{(g)}\binom{\frac{b}{2}-1}{6g-5}+..+c_k^{(g)}\binom{\frac{b}{2}-1}{k-1}+..+c_{2g}^{(g)}\binom{\frac{b}{2}-1}{2g-1}\]
where the coefficients are weighted counts of fatgraphs of genus $g$ with one boundary component
\[ c_k^{(g)}=\sum_{\begin{array}{c}\Gamma\in\fato_{g,1}\\e(\Gamma)=k\end{array}}\frac{1}{|Aut\Gamma|}.\]
The polynomial $\binom{\frac{b}{2}-1}{k}$ evaluates at $b=0$ to give $(-1)^k$ hence
\[N_{g,1}(0)=\sum_{\Gamma\in\fato_{g,1}}\frac{(-1)^{e(\Gamma)-1}}{|Aut\Gamma|}=\chi(\modm_{g,1})\]
directly showing that the Euler characteristic of the moduli space is given by evaluation of the polynomial at 0.  This agrees with the more general result of Theorem~\ref{th:euler}. 

The polynomial $N_{g,1}(b)$ appears implicitly in the work of Harer and Zagier \cite{HZaEul} where they
 first calculated the orbifold Euler characteristic of $\modm_{g,1}$
\[\chi\left(\modm_{g,1}\right)=\zeta(1-2g).\] 
This is used together with the exact sequence of mapping class groups
\begin{equation}  \label{eq:exact} 
1\rightarrow\pi_1(C-\{p_1,...,p_n\})\rightarrow\Gamma_g^{n+1}\rightarrow\Gamma_g^n\rightarrow 1
\end{equation}
to calculate $\chi(\modm_{g,n+1})=\chi(\Gamma_g^{n+1})=\chi(\Gamma_g^n)\chi(C-\{p_1,...,p_n\})$ hence
\begin{align*}
&\chi(\modm_{g,n+1})=(-1)^n\frac{(2g-2+n)!}{(2g-2)!}\chi(\modm_{g,1}),\quad g>0\\ 
&\chi(\modm_{0,n+1})=(-1)^n(n-2)!\chi(\modm_{0,3}).
\end{align*}
\begin{thm}[\cite{NorStr}]   \label{th:dilaton}
\[N_{g,n+1}(2,b_1,...,b_n)-N_{g,n+1}(0,b_1,...,b_n)=(2g-2+n)N_{g,n}(b_1,...,b_n).\]
\end{thm}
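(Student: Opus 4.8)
The plan is to prove the identity by a weighted correspondence between metric fatgraphs of type $(g,n+1)$ with a short distinguished boundary and metric fatgraphs of type $(g,n)$, organised around an Euler characteristic identity. First I would record that every $\Gamma'\in\fato_{g,n}$ satisfies $e(\Gamma')-v(\Gamma')=2g-2+n$, since $\Gamma'$ is a spine of the genus $g$ surface with $n$ boundary components, so that with the $n$ complementary disks as faces one has $v(\Gamma')-e(\Gamma')+n=2-2g$. Hence the right hand side can be rewritten as a signed weighted sum
\[
(2g-2+n)N_{g,n}(b_1,...,b_n)=\sum_{\Gamma'\in\fato_{g,n}}\frac{e(\Gamma')-v(\Gamma')}{|{\rm Aut\ }\Gamma'|}N_{\Gamma'}(b_1,...,b_n),
\]
and the task becomes to produce the combinations $+e(\Gamma')$ and $-v(\Gamma')$ from the two evaluations on the left.

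Next I would analyse, for each $\Gamma\in\fato_{g,n+1}$, the difference $N_{\Gamma}(2,b_1,...,b_n)-N_{\Gamma}(0,b_1,...,b_n)$ by classifying the local structure of the distinguished boundary when its perimeter is the smallest allowed even value. Writing $b_0$ for the length of the distinguished boundary, a genuine lattice point with $b_0=2$ forces that face to be bounded by edge--sides of total length $2$; the generic configuration is a bigon whose two length-$1$ edges, when the face is collapsed, merge into a single edge of a fatgraph $\Gamma'\in\fato_{g,n}$. This collapse is the inverse of doubling an edge of $\Gamma'$ into a bigon, an operation that preserves the genus and raises $n$ by one, so summing over those $\Gamma$ whose distinguished face is a bigon reproduces the term weighted by $e(\Gamma')$. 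The automorphism bookkeeping is handled exactly as in the proof of Theorem~\ref{th:recurs}: the subgroup of ${\rm Aut\ }\Gamma$ fixing the distinguished face and the subgroup of ${\rm Aut\ }\Gamma'$ fixing the doubled edge are both trivial, because an automorphism fixing that edge fixes an adjacent oriented edge and is therefore the identity.

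The contribution $-v(\Gamma')$ must then come from the evaluation at $b_0=0$, and this is where the main obstacle lies. At $b_0=0$ there are no genuine lattice points---every edge has positive integer length, so no positive solution of $A_{\Gamma}{\bf x}={\bf b}$ has distinguished boundary of length $0$---and one must instead use the polynomial representative of the quasi-polynomial $N_{\Gamma}$. The value of this representative at $b_0=0$ is governed by Ehrhart reciprocity, exactly as in the $n=1$ computation in the paper, where $\binom{b/2-1}{k}$ evaluates to $(-1)^{k}$ at $b=0$; the sign produced there is what turns a vertex count into $-v(\Gamma')$. The crux of the argument is to show that, summed against the automorphism weights and uniformly over the residual configurations (a length-$2$ boundary need not be a bigon---it may be a monogon around a short loop, and one must also track the parity of the remaining $b_i$), the reciprocity signs assemble the vertices of $\Gamma'$ with coefficient $-1$, so that the two evaluations combine to $e(\Gamma')-v(\Gamma')$ and match the rewritten right hand side.

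Finally I would record the parity reductions---both sides vanish unless $\sum b_i$ has the appropriate parity, and it suffices to verify the identity on the polynomial representatives with the $b_i$ even---and check the base cases $(g,n)=(0,3)$ and $(1,1)$ directly against the explicit formulas $N_{0,3}=1$ and $N_{1,1}(b_1)=\frac{1}{48}(b_1^2-4)$. As a cross-check, and as an alternative route using only results already established, one can instead induct on $2g-2+n$: applying the recursion (\ref{eq:rec}) of Theorem~\ref{th:recurs} to $N_{g,n+1}(b_0,b_1,...,b_n)$ at $b_0=2$ and at $b_0=0$ and subtracting, the convolution sums differ by boundary terms that telescope into lower lattice counts, to which the inductive hypothesis applies; this bookkeeping is heavier but sidesteps the reciprocity subtlety.
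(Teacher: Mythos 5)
First, a point of order: the paper never proves Theorem~\ref{th:dilaton} --- it is quoted from \cite{NorStr} without proof --- so your argument has to stand on its own, and it does not, because its central counting claim is false. Your rewriting of the right hand side via $e(\Gamma')-v(\Gamma')=2g-2+n$ is correct, and so is the classification of a perimeter-$2$ face as a bigon (two edges of length $1$) or a monogon (a loop of length $2$). But collapsing a bigon is \emph{not} inverse to ``doubling an edge of $\Gamma'$'': if an endpoint of the bigon is trivalent, the collapse creates a valence-$2$ vertex which must be smoothed away, so the collapsed edge becomes a unit segment lying strictly inside a longer edge of $\Gamma'$; conversely an edge of $\Gamma'$ of length $\ell>1$ cannot be doubled into a perimeter-$2$ face without changing the other boundary lengths. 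The correct correspondence is between bigon-type lattice points and pairs (lattice point of type $(g,n)$, \emph{unit segment} of the metric graph), and since a metric graph with boundary lengths ${\bf b}$ has $\frac{1}{2}\sum_j b_j$ unit segments, the bigon contribution is $\frac{1}{2}\bigl(\sum_j b_j\bigr)N_{g,n}({\bf b})$ --- a quantity depending on ${\bf b}$, not your combinatorial constant $\sum_{\Gamma'}e(\Gamma')N_{\Gamma'}({\bf b})/|{\rm Aut\ }\Gamma'|$. Concretely, take $(g,n)=(1,1)$ and $b_1=6$. Then $N_{1,2}(6,2)=\frac{1}{384}\cdot 36\cdot 32=3$, of which the bigon-type points contribute $\frac{6}{2}N_{1,1}(6)=2$ and the monogon-type points contribute $1$; your predicted bigon term is instead $\frac{3}{6}\cdot 1+\frac{2}{4}\cdot 2=\frac{3}{2}$ (theta graph: $e=3$, $|{\rm Aut}|=6$, one lattice point; figure eight: $e=2$, $|{\rm Aut}|=4$, two lattice points) --- the discrepancy is exactly the bigon sitting over a unit segment in the interior of the length-$2$ loop of the figure eight, which your edge-based count misses. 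The vertex half fails as well: you would need $[\hbox{monogon count}]-N_{1,2}(6,0)$ to equal $-\frac{5}{6}$, but it equals $1-\frac{7}{3}=-\frac{4}{3}$. So the $e-v$ splitting cannot be matched to the two evaluations, and the step you yourself flag as ``the crux'' --- that reciprocity turns the $b_0=0$ evaluation into $-v(\Gamma')$ --- is not only unproven but contradicted by these numbers. (Evaluating one variable at $0$ with the others held fixed and positive is in any case not an Ehrhart-reciprocity statement about dilates of a single polytope; a combinatorial interpretation of $N_{g,n+1}({\bf b},0)$ is the subject of \cite{DNoCou}.)

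If you want to keep the Euler-characteristic idea, the correct version lives on the \emph{metric} graph rather than the fatgraph: for every lattice point, (number of unit segments) $-$ (number of integer points) $=e(\Gamma')-v(\Gamma')=2g-2+n$, so the dilaton equation is equivalent to the identity $N_{g,n+1}({\bf b},0)=[\hbox{monogon count}]+[\hbox{weighted count of pairs (lattice point, marked integer point of the graph)}]$ --- in the example, $\frac{7}{3}=1+\frac{4}{3}$ --- and proving that interpretation of the evaluation at $0$ is the real content of the theorem, which your proposal leaves untouched. Your fallback route, subtracting the recursion (\ref{eq:rec}) of Theorem~\ref{th:recurs} at $b_0=2$ and $b_0=0$, is too vague to assess: the left sides are $(2+\sum b_i)N_{g,n+1}(2,{\bf b})$ and $(\sum b_i)N_{g,n+1}(0,{\bf b})$, the convolution ranges $p+q=2+b_j$ and $p+q=b_j$ do not align, and no cancellation mechanism is exhibited. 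A further minor slip: the stabilizer in ${\rm Aut\ }\Gamma'$ of an unoriented edge need not be trivial --- only fixing an \emph{oriented} edge forces triviality --- for instance the $\bz_6$ of the theta graph preserves each edge setwise, so your automorphism bookkeeping also needs repair.
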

Using Theorem~\ref{th:euler} together with the vanishing result $N_{g,n+1}(2,0,...,0)=0$, \cite{NorStr},  Theorem~\ref{th:dilaton} is seen to generalise the recursion between Euler characteristics of moduli spaces.  In a sense it reflects the exact sequence (\ref{eq:exact}).

The following combinatorial definitions appear in \cite{HZaEul}.
\begin{defn}
Define $\mu_g(n)$ to be the number of orientable genus $g$ gluings of a $2n$-gon with a distinguished edge and so that neighbouring edges are not identified.
\end{defn}
A fatgraph $\Gamma$ with one boundary component of length $b$ defines a $b$-gon with edges identified such that neighbouring edges are not identified since otherwise this would correspond to a valence one vertex in the fatgraph which is excluded.  Furthermore,  the automorphism group of $\Gamma$ is a subgroup of the cyclic group ${\rm Aut\ }\Gamma\subset\bz_b$ so we must divide by
\[ b=\{ {\rm number\ of\ ways\ to\ distinguish\ an\ edge\ of\ }\Gamma\}\times|{\rm Aut\ }\Gamma|\]  
to get the relation
\[ N_{g,1}(b)=\frac{1}{b}\mu_g\left(\frac{b}{2}\right).\]

The polynomials $N_{g,1}(b)$ are calculated using the recursion (\ref{eq:rec}).  This requires one to first calculate $N_{g',n'}$ for all $(g',n')$ satisfying $2g'-2+n'<2g-1$.  Instead, the techniques of Harer and Zagier allow one to calculate $N_{g,1}(b)$ without calculating $N_{g',n'}(b_1,...,b_{n'})$ for $n'>1$.
\begin{defn}
Define $\epsilon_g(n)$ to be the number of orientable genus $g$ gluings of a $2n$-gon with a distinguished edge.  
\end{defn}
\begin{lemma}[\cite{HZaEul}]
\[\epsilon_g(n)=\sum_{i\geq 0}\binom{2n}{i}\mu_g(n-i)\]
\end{lemma}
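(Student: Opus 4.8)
The plan is to set up a reduction-and-reconstruction bijection between arbitrary gluings and reduced ones. I would encode a gluing of the $2n$-gon with its distinguished edge as a fixed-point-free involution (perfect matching) on the $2n$ cyclically ordered, and hence effectively labelled, sides; the genus is read off from the Euler characteristic of the resulting surface, and ``neighbouring edges are identified'' means that two cyclically adjacent sides are matched to each other. First I would analyse the basic move: whenever two adjacent sides are matched they form an \emph{ear}, which in the boundary word of the polygon is a cancelling pair $aa^{-1}$; deleting it produces a gluing of a $2(n-1)$-gon of the \emph{same} genus (for $n\geq 2$ this is the standard cancellation that does not change the homeomorphism type).

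Next I would show that iterating this move terminates at a unique reduced gluing $\overline{G}$, which is exactly the kind of object counted by $\mu_g$. Termination is clear, and uniqueness (confluence) follows because distinct ears occupy disjoint sides, so the available cancellations commute and a Newman-type local-confluence argument applies. Thus every genus $g$ gluing $G$ of the $2n$-gon determines a reduced genus $g$ gluing $\overline{G}$ on the $2(n-i)$ surviving sides together with the system of the $2i$ deleted sides. The structural point to verify is that the deleted sides, with the matching induced by $G$, are precisely the \emph{removable} configurations: their maximal runs (blocks of consecutive deleted sides) all have even length, each block is matched internally by an arbitrary non-crossing matching, and no arc joins different blocks (otherwise an arc would enclose a surviving side and could never be peeled off). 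Writing $W(n,i)$ for the number of such removable systems of $2i$ sides inside $\{1,\dots,2n\}$, the reconstruction is a bijection and gives $\epsilon_g(n)=\sum_{i}W(n,i)\,\mu_g(n-i)$.

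The crux is then the purely combinatorial identity $W(n,i)=\binom{2n}{i}$, and this is where I expect the real work to lie. I would prove it by double counting the pairs consisting of a removable system together with a marked surviving side. On one hand each system has exactly $2(n-i)$ surviving sides, giving $2(n-i)\,W(n,i)$ pairs. On the other hand, cutting the cycle at the marked (necessarily surviving) side linearises the block structure into a sequence $\mathrm{real}_1,\mathrm{block}_1,\dots,\mathrm{real}_{2m},\mathrm{block}_{2m}$ with $m=n-i$, so these pairs are also counted by choosing the cut point in $2n$ ways and then a weighted block sequence, namely $2n\,[x^i]C(x)^{2m}$, where $C(x)=\sum_k C_kx^k$ is the Catalan generating function. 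Comparing the two counts and inserting the classical coefficient $[x^i]C(x)^{2m}=\frac{m}{n}\binom{2n}{i}$ yields $W(n,i)=\binom{2n}{i}$, completing the proof. One small point I would flag: in the fully reduced case $m=0$ one instead has $W(n,n)=C_n\neq\binom{2n}{n}$, but for $g\geq 1$ that offending term is killed by $\mu_g(0)=0$, so the stated formula is unaffected.
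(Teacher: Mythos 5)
The paper does not prove this lemma at all --- it is quoted directly from Harer--Zagier \cite{HZaEul} --- so there is no in-paper proof to compare yours against; judged on its own merits, your argument is correct and complete. Your reduction-and-reconstruction scheme (peel off ears, i.e.\ cancelling pairs $aa^{-1}$; record the reduced gluing together with the removable configuration of deleted sides) is essentially the classical Harer--Zagier argument, and you handle the points where it could go wrong: confluence of ear removal (distinct ears are disjoint because the pairing is an involution, so Newman's lemma applies), genus invariance of cancellation, and the characterization of removable configurations (even-length blocks, internally non-crossing, no cross-block arcs --- your parenthetical circular-dependency remark is exactly why crossing or cross-block arcs can never be peeled, since surviving sides are never deleted). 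The real content is the identity $W(n,i)=\binom{2n}{i}$, and your double count is valid: marking one of the $2m=2(n-i)$ surviving sides linearises the configuration into $2m$ gaps each carrying a linear non-crossing matching, giving $2m\,W(n,i)=2n\,[x^i]C(x)^{2m}$, and the ballot-number evaluation $[x^i]C(x)^{2m}=\frac{2m}{2n}\binom{2n}{i}$ (Lagrange inversion) closes the computation; this checks out on small cases, e.g.\ $W(3,2)=12+3=15=\binom{6}{2}$. Your final caveat is also genuinely a caveat about the \emph{statement}, not about your proof: for $g=0$ the identity as printed fails ($\epsilon_0(n)=C_n$, the Catalan number, whereas the right-hand side is $\binom{2n}{n}$ under the convention $\mu_0(0)=1$), and the lemma is only invoked in the paper for $g\geq 1$, where $\mu_g(0)=0$ kills the degenerate $i=n$ term exactly as you say. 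What your write-up buys over the paper's bare citation is a self-contained proof; what it costs is the reliance on the generating-function coefficient formula, which one could replace by a direct bijection between removable configurations and $i$-subsets of the $2n$ sides, but your route is perfectly rigorous as it stands.
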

Store the $\epsilon_g(n)$ in the generating function
\[C(n.k)=\sum_{0\leq g\leq n/2}\epsilon_g(n)k^{n+1-2g}.\]
\begin{thm}[\cite{HZaEul}]
$C(n,k)=(2n-1)!!c(n,k)$ where $c(n,k)$ is defined by the generating function
\[ 1+2\sum_{n=0}^{\infty}c(n,k)x^{n+1}=\left(\frac{1+x}{1-x}\right)^k\]
or by the recursion
\[c(n,k)=c(n,k-1)+c(n-1,k)+c(n-1,k-1)\quad n,k>0\]
with boundary conditions $c(0,k)=k$, $c(n,0)=0$, $n,k\geq 0$.
\end{thm}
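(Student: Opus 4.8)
The plan is to reduce the statement to a single combinatorial recursion for $C(n,k)$, after first checking that the two descriptions of $c(n,k)$ agree. For the latter, set $f_k(x)=\left(\frac{1+x}{1-x}\right)^k$. Then $f_k=\frac{1+x}{1-x}f_{k-1}$ is equivalent to $(1-x)f_k=(1+x)f_{k-1}$, i.e.
\[ f_k-f_{k-1}=x\left(f_k+f_{k-1}\right). \]
Substituting $f_k=1+2\sum_{n\ge0}c(n,k)x^{n+1}$ and comparing the coefficient of $x^{n+1}$ gives $c(0,k)-c(0,k-1)=1$ and, for $n\ge1$, $c(n,k)-c(n,k-1)=c(n-1,k)+c(n-1,k-1)$; together with $f_0=1$, which forces $c(n,0)=0$, these are exactly the stated recursion and boundary conditions. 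Hence it suffices to prove that $\widetilde c(n,k):=C(n,k)/(2n-1)!!$ satisfies the same recursion and boundary values.

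Next I would record the combinatorial meaning of $C(n,k)$. A genus $g$ gluing of the $2n$-gon produces a surface with one face and $n$ edges, so by $\chi=2-2g$ it has exactly $V=n+1-2g$ vertex classes; thus $k^{n+1-2g}$ counts the $k$-colourings of these classes and
\[ C(n,k)=\sum_{\text{gluings}}k^{V} \]
enumerates pairs consisting of a gluing of the $2n$-gon and a colouring of its vertex classes by $k$ colours. Equivalently, writing $\sigma=(1\,2\,\cdots\,2n)$ and letting $\tau$ range over fixed-point-free involutions, $V$ is the number of cycles of $\sigma\tau$. Clearing the factor $(2n-1)!!/(2n-3)!!=2n-1$, the recursion for $\widetilde c$ is equivalent to
\[ C(n,k)-C(n,k-1)=(2n-1)\bigl[C(n-1,k)+C(n-1,k-1)\bigr]. \]
The base cases are immediate: $V=1$ for $n=0$ gives $C(0,k)=k$, and $V\ge1$ gives $C(n,0)=0$ for $n\ge1$.

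The heart of the proof, and the step I expect to be the main obstacle, is a surgery establishing this recursion. I would group the gluings by the partner of the distinguished edge, i.e. by the value $\tau(2n)\in\{1,\dots,2n-1\}$, which supplies the factor $2n-1$. Deleting the distinguished side together with its partner collapses the $2n$-gon to a $2n-2$-gon and induces a gluing of it, reducing $n$ to $n-1$. Following how the two endpoints of the distinguished side lie among the vertex classes splits the count into two families, according to whether this surgery leaves the number of cycles of $\sigma\tau$ unchanged or alters it by one; these two families are what produce the summands $C(n-1,k)$ and $C(n-1,k-1)$, while the condition that colour $k$ be present, encoded in $k^V-(k-1)^V$, is what puts $C(n,k)-C(n,k-1)$ on the left. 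The delicate point is to verify that, uniformly over the $2n-1$ choices of partner, the colour bookkeeping under the merging or splitting of a vertex class assembles exactly into the displayed identity; this is where the argument needs care. Granting the recursion and the base cases, an induction on $n$, and within it on $k$, gives $\widetilde c(n,k)=c(n,k)$, hence $C(n,k)=(2n-1)!!\,c(n,k)$.

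Finally, I would note an alternative that bypasses the surgery: identify $C(n,k)$ with the Gaussian matrix moment $\langle\operatorname{tr}M^{2n}\rangle$ for a $k\times k$ Hermitian matrix $M$. Wick's theorem expands this moment as the ribbon-graph sum $\sum k^{\#\text{faces}}$, and the faces are precisely the vertex classes above, so the two quantities coincide. Evaluating the moment exactly by means of Hermite orthogonal polynomials then reproduces both the factor $(2n-1)!!$ and the generating function $\left(\frac{1+x}{1-x}\right)^k$ directly, which is the route taken in the original computation.
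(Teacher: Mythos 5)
First, a structural remark: the paper does not prove this statement at all --- it is quoted directly from Harer--Zagier \cite{HZaEul} --- so there is no internal proof to compare against, and your attempt has to be measured against the known arguments. Your two reduction steps are correct: extracting coefficients from $(1-x)f_k=(1+x)f_{k-1}$ does show the generating function and the recursion with boundary conditions $c(0,k)=k$, $c(n,0)=0$ define the same array; and the identification $C(n,k)=\sum_{\rm gluings}k^{V}$ with $V=n+1-2g$ vertex classes, together with the reduction of the theorem to the identity
\[ C(n,k)-C(n,k-1)=(2n-1)\bigl[C(n-1,k)+C(n-1,k-1)\bigr], \]
is a valid reformulation.

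The genuine gap is exactly where you flag it, and it is fatal in the form you describe. Under the chord-deletion correspondence $\tau\leftrightarrow(j,\tau')$ one does have $V(\tau)=V(\tau')+\delta$ with $\delta\in\{-1,0,+1\}$, but your ``two families'' bookkeeping would require the pointwise identities $k^{V'+\delta}-(k-1)^{V'+\delta}=k^{V'}$ for one family and $=(k-1)^{V'}$ for the other, and these are false. For $\delta=0$ one would need $k^{V'}-(k-1)^{V'}$ to equal $k^{V'}$ or $(k-1)^{V'}$, which fails for every $V'\geq 1$, $k\geq 2$; even in the $\delta=+1$ family, $k^{V'+1}-(k-1)^{V'+1}=k^{V'}+(k-1)^{V'}$ holds only when $V'=1$ (at $V'=2$ it reads $3k^2-3k+1\neq 2k^2-2k+1$). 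So the recursion cannot be obtained by matching the deletion map family-by-family; any proof along these lines needs global cancellation across different $\tau'$ or a genuinely different correspondence, and this is not a small repair --- the absence of such a simple surgery is precisely why the combinatorial proofs of the Harer--Zagier formula in the literature (Lass via Eulerian tours and the BEST theorem, Goulden--Nica via a tree bijection; see also \cite{LZvGra}) are substantially more elaborate. Your closing alternative --- identifying $C(n,k)$ with the Gaussian moment $\langle\operatorname{tr}M^{2n}\rangle$ over $k\times k$ Hermitian matrices and evaluating it exactly --- is indeed the route of the original computation in \cite{HZaEul}, but as written it is a citation rather than a proof: the exact evaluation of that moment, which produces both the factor $(2n-1)!!$ and the generating function $\left(\frac{1+x}{1-x}\right)^k$, is the entire content of the theorem and is not carried out.
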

We have used this to calculate $N_{g,1}(b)$ up to $g=50$.  An interesting question is whether the large genus behaviour of $N_{g,1}(b)$ can be understood.  For example, does $N_{g,1}(b)/N_{g,1}(0)$ tend to a known non-trivial function as $g\to\infty$?

\section{Hurwitz problems}   \label{sec:hur}

A Hurwitz problem is an enumerative problem in combinatorics and geometry where one counts up to isomorphism branched covers over a surface $\Sigma\to\Sigma'$ with specified branch points and ramification above the branch points.  Variations of the problem may also further require that the covers be connected, or that the points above a branch point be labeled.   Two covers $\Sigma_1$ and $\Sigma_2$ are isomorphic if there exists a homeomorphism $f:\Sigma_1\to\Sigma_2$ (that  preserves any labeling upstairs and) covers the identity downstairs on $\Sigma'$.  If $\Sigma_1=\Sigma_2$ a homeomorphism is a non-trivial automorphism of the cover if it is not isotopic to the identity.  For a degree $d$ cover the ramification above a branch point is described by a partition $\lambda=(\lambda_1,...,\lambda_k)$ of $d$, where $\lambda_i$ are integers, $\lambda_1\geq\lambda_2\geq...\geq\lambda_k>0$ and $\lambda_1+...+\lambda_k=d$.  Fix $r+1$ points $\{p_1,p_2,...,p_r,p_{r+1}\}\subset\Sigma'$ together with a partition of $d$ at each $p_i$.  The {\em Hurwitz number} is defined to be the weighted number of non-isomorphic branched covers $\pi:\Sigma\to\Sigma'$ with this branching data where the weight of a cover $\pi$ is $1/|{\rm Aut\ }\pi|$.   Hurwitz numbers for disconnected and connected covers can be retrieved from each other, and the Hurwitz number of labeled covers is simply $|{\rm Aut\ }\mu|$
times the unlabeled Hurwitz number where, say the points $\pi^{-1}(p_{r+1})$ are labeled and $\mu$ is the partition at $p_{r+1}$.  
\begin{ex}   \label{ex:hur}
Over $\{0,1,\infty\}\subset\bp^1$ specify branching $\{(4),(2,2),(4)\}$, respectively.  The cover $\Sigma$ is a genus 1 surface since by the Riemann-Hurwitz formula 
\[\chi(\Sigma)=4\cdot 2-\left((4-1)+(2-1)+(2-1)+(4-1)\right)=0.\]  
There is a unique cover $\Sigma\to S^2$ with this branching data.  It possesses a non-trivial homeomorphism $f:\Sigma\to\Sigma$ that swaps the two points above $1$ and cyclically permutes the points above $0$ and infinity.  In fact $f$ generates the automorphism group of this cover which is $\bz_4$.  Thus the number of branched covers is $1/4$.
\end{ex}
An equivalent formulation of a Hurwitz problem consists of counting factorisations in the symmetric group.  A partition $\lambda=(\lambda_1,...,\lambda_k)$ of $d$ determines a conjugacy class in the symmetric group $C_{\lambda}\subset S_d$ consisting of all permutations with cycle structure $(\lambda_1,...,\lambda_k)$.  It is also natural to represent a conjugacy class as the formal sum of its elements which is an element of the group ring $C_{\lambda}\in \bc(S_d)$, in fact it is an element of centre of the group ring $C_{\lambda}\in Z(\bc(S_d))$.  Given conjugacy classes $C_1$, $C_2$, ..., $C_{r+1}$ in $S_d$ the Hurwitz problem counts factorisations of the identity by elements taken from the given conjugacy classes, up to equivalence.
\[H_{C_1,...,C_{r+1}}=\#\{\sigma_1,...,\sigma_{r+1}|\prod\sigma_i=(1),\quad \sigma_i\in C_i\}/d!\]
where division by $d!$ comes from identifying equivalent products, or explicitly 
\[(\sigma_1,...,\sigma_{r+1})\sim(g\sigma_1g^{-1},...,g\sigma_{r+1}g^{-1})\] 
for any $g\in S_d$.  If a product is fixed by conjugation then this defines an automorphism.  Equivalently,
consider the representation of $\bc(S_d)$ onto itself and identify any element of $\bc(S_d)$ with the endomorphism it defines.  This is a dimension $d!$ representation hence $\tr (1)=d!$.  Any non-trivial permutation $\sigma\in S_d$ has all diagonal entries zero so in particular $\tr(\sigma)=0$.  Thus
\[H_{C_1,...,C_2}=\frac{1}{d!^2}\tr(C_1C_2...C_{r+1})\]
since the trace picks out the constant term in the expansion of $C_1C_2...C_{r+1}$.  Explicitly, the product of conjugacy classes contains all products of elements $\sigma_1...\sigma_{r+1}$ for $\sigma_i\in C_i$ and 
\[ \tr(\sigma_1...\sigma_{r+1})=0\Leftrightarrow \sigma_1...\sigma_{r+1}\neq(1)\]
so the trace picks out the terms $\sigma_1...\sigma_{r+1}=(1)$ and evaluates them to $d!$.  Division by $d!$ counts factorisations, and division by $d!^2$ counts factorisations up to equivalence.

\addtocounter{ex}{-1}
\begin{ex}[{\bf continued}] Let $C_1$, $C_2$ and $C_3$ be conjugacy classes in $S_4$ given by the respective partitions $(4)$, $(2,2)$ and $(4)$.  There are 3 possible words of type $(2,2)$, for example $(12)(34)$.  The products $(1324)(12)(34)(1324)=(1)$ and $(1423)(12)(34)(1423)=(1)$ are the only solutions to the factorisation problem involving $(12)(34)$.  Thus there are 6 solutions of the factorisation problem, 2 for each word of type $(2,2)$.  Thus the count is $6/4!=1/4$ which agrees with the calculations above.
\end{ex}

\subsection{Simple Hurwitz numbers}
Hurwitz \cite{HurHur} studied the problem of connected surfaces $\Sigma$ of genus $g$ covering $\Sigma'=S^2$, branched over $r+1$ fixed points $\{p_1,p_2,...,p_r,p_{r+1}\}$ with arbitrary partition $\mu=(\mu_1,...,\mu_n)$ over $p_{r+1}$.  Over the other $r$ branch points one specifies simple ramification, i.e. the partition $(2,1,1,....)$.   The Riemann-Hurwitz formula determines the number $r$ of simple branch points via $2-2g-n=|\mu|-r$.  
\begin{defn}  \label{def:hurn}
Define the simple Hurwitz number $H_{g,\mu}$ to be the weighted count of genus $g$ connected covers of $S^2$ with ramification $\mu=(\mu_1,...,\mu_n)$ over $\infty$ and simple ramification elsewhere.
Each cover $\pi$ is counted with weight $1/|{\rm Aut}(\pi)|$.
\end{defn}

Simple Hurwitz numbers naturally arise via the intersection of subvarieties in the moduli space $\modm_g(\bp^1,d)$ of morphisms from smooth genus $g$ curves to $\bp^1$.  If $\mu=1^d$ then by allowing the images of the simple branch points $p_1,...,p_r$ to vary, this gives an open dense subset of $\modm_g(\bp^1,d)$ since generically an element of $\modm_g(\bp^1,d)$ has simple branching.  The points $p_1,...,p_r$ give local parametrisations of this open dense subset of $\modm_g(\bp^1,d)$.  Fixing a branch point $p_j$ corresponds to intersecting with a hypersurface in $\modm_g(\bp^1,d)$.  Hence fixing $r$ branch points expresses the simple Hurwitz numbers as the intersection of $r$ hypersurfaces in $\modm_g(\bp^1,d)$.  For general $\mu$, one fixes $p_{r+1}$ and allows $p_1,...,p_r$ to vary to get an open dense subset of a subset $\modm_g(\bp^1,d,\mu)\subset\modm_g(\bp^1,d)$ which specifies the monodromy $\mu$ around $p_{r+1}$.

The ELSV formula \cite{ELSVHur} relates the Hurwitz numbers $H_{g,\mu}$ to intersection numbers of tautological classes $\psi_i=c_i(L_i)$ and $\lambda_i=c_i(E)$ where $L_i$ are tautological line bundles and $E$ is the Hodge bundle over the compactified moduli space.  The ELSV formula is
\[ H_{g,\mu}=\frac{r(g,\mu)!}{|{\rm Aut\ }\mu|}\prod_{i=1}^n\frac{\mu_i^{\mu_i}}{\mu_i!}P_{g,n}(\mu_1,...,\mu_n)\]
for the polynomial
\[ P_{g,n}(\mu_1,...,\mu_n)=\int_{\overline{\modm}_{g,n}}\frac{1-\lambda_1+...+(-1)^g\lambda_g}{(1-\mu_1\psi_1)...(1-\mu_n\psi_n)}\]
where $\mu=(\mu_1,...,\mu_n)$ and $r(g,\mu)=2g-2+n+|\mu|$.

\begin{table}[ht]  \label{tab:pol}
\caption{$P_{g,n}(\mu_1,...,\mu_n)$}
\begin{spacing}{1.4}  
\begin{tabular}{||l|c|c||} 
\hline\hline

{\bf g} &{\bf n}&$P_{g,n}(\mu_1,...,\mu_n)$\\ \hline

0&3&1\\ \hline
1&1&$\frac{1}{24}(\mu_1-1)$\\ \hline
0&4&$\mu_1+\mu_2+\mu_3+\mu_4$\\ \hline
1&2&$\frac{1}{24}\left(\mu_1^2+\mu_1\mu_2+\mu_2^2-\mu_1-\mu_2\right)$\\ 
\hline\hline
\end{tabular} 
\end{spacing}
\end{table}
The top degree terms of $P_{g,n}(\mu_1,...,\mu_n)$ involve only intersection numbers of $\psi$-classes and hence are related to Kontsevich's volume polynomial $V_{g,n}(b_1,...,b_n)$.  Later we will need the following definition.
\begin{defn}
Define the labeled simple Hurwitz number
\[H_{g,n}(\mu_1,...,\mu_n)=\frac{|{\rm Aut\ }\mu|}{r(g,\mu)!}\cdot H_{g,\mu}.\]
\end{defn}
The points above $\infty$ are labeled, contributing a factor of $|{\rm Aut\ }\mu|$, and now $(\mu_1,...,\mu_n)\in\bz_+^n$ (not necessarily monotonic.)  The factor $1/r(g,\mu)!$ is there mainly because this is a convenient form for Section~\ref{sec:lap}.  

\subsection{A Belyi Hurwitz problem}   \label{sec:belyi}
Consider connected orientable genus $g$ branched covers $\pi:\Sigma\to S^2$ unramified over $S^2-\{0,1,\infty\}$ with points in the fibre over $\infty$ labeled $(p_1,...,p_n)$ and with ramification $(b_1,...,b_n)$, ramification $(2,2,...,2)$ over $1$ and ramification greater than 1 at all points above $0$.  We call the weighted count of non-isomorphic such branched covers a Belyi Hurwitz problem because the covers are known as Belyi maps.  Such a branched cover $\pi:\Sigma\to S^2$ decomposes $\Sigma$ into flat cylinders as follows.  Pull back the interval $\pi^{-1}[0,1]=\Gamma\subset\Sigma$ to define a fatgraph $\Gamma$ with integer side lengths.  It is a fatgraph since $\Sigma-\Gamma\to\overline{\bc} -[0,1]$ is branched only over $\infty$ hence each component is a cyclic branched cover of the disk at a single point, which is necessarily a disk, i.e. $\Sigma-\Gamma$ is a disjoint union of disks.  
\begin{figure}[ht] 
	\centerline{\includegraphics[height=2cm]{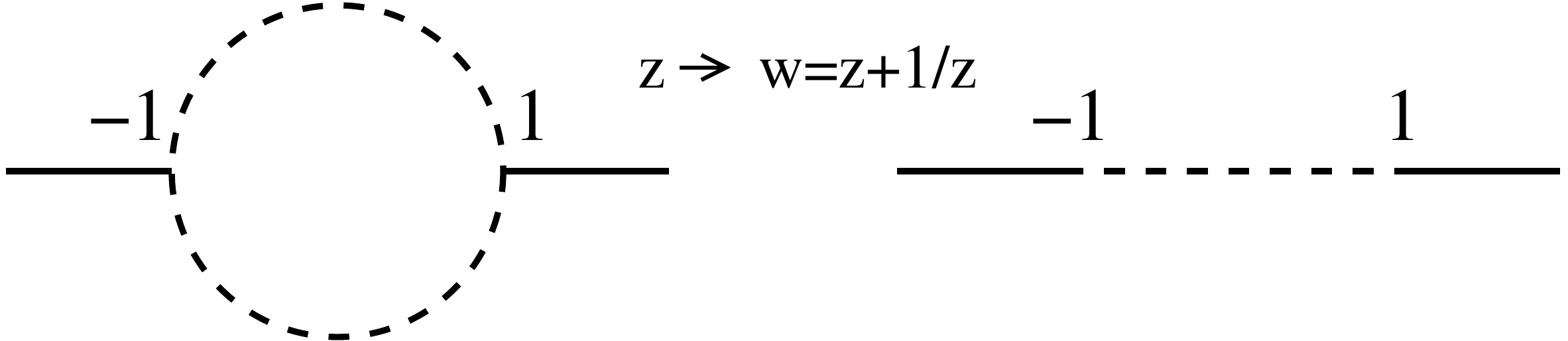}}
	\caption{cylinder $\simeq\bc-[0,1]$ }
	\label{fig:confeq}
\end{figure}
Figure~\ref{fig:confeq} shows that the complement of an interval in the complex plane is conformally a flat cylinder 
\[S^1\times\br^+\simeq\bc-[0,1].\]  
More precisely, $S^1\times\br^+$ is isometric to $\{z\in\bc:|z|>1\}$ equipped with the metric $|dz/2\pi z|^2$.  It maps conformally to $\{w\in\bc:w\notin[-1,1]\}$ via $w=z+1/z$.  Translate and rescale to adjust the interval $[-1,1]$ to $[0,1]$.   

Over the flat cylinder $\bc-[0,1]$, $\pi$ is a covering map, and in particular a degree $b_i$ cover of $\bc-[0,1]$ is a flat cylinder of circumference $b_i$.  Thus, if we prescribe ramification $(b_1,...,b_n)$ over $\infty$, $(2,2,...,2)$ over 1 and $(r_1,...,r_k)$ over 0, where $r_i>1$, then $\pi^{-1}(\bc-[0,1])$ decomposes the surface upstairs into flat cylinders and $\pi^{-1}[0,1]$ defines a lattice point in $P_{\Gamma}(b_1,...,b_n)$ for $\Gamma\in\fat$ . 

Conversely, a lattice point in the combinatorial moduli space $\modm_{g,n}(b_1,...,b_n)$ corresponds to a Riemann surface $\Sigma$ with labeled points $\{p_1,...,p_n\}$ whose complement decomposes along flat cylinders with integer circumferences $(b_1,...,b_n)$.   Any two flat cylinders that meet, meet along integer edge length arcs.  The problem of gluing together cylinders along arcs on their boundaries is solved elegantly by using the conformal model of the cylinder $S^1\times\br^+\simeq\bc-[0,1]$ described above.  The conformal equivalence can extend across $[0,1]$ enabling one to glue cylinders along arcs on their boundaries.  This gives rise to a branched cover $\pi:\Sigma\to S^2$ unramified over $S^2-\{0,1,\infty\}$ with points in the fibre over $\infty$ labeled $(p_1,...,p_n)$ and with ramification $(b_1,...,b_n)$, ramification $(2,2,...,2)$ over $1$ and ramification greater than 1 at all points above $0$.  

Hence we have proven that $N_{g,n}(b_1,...,b_n)$ is a sum of Hurwitz numbers.
\begin{prop}
$N_{g,n}(b_1,...,b_n)$ is the weighted number of non-isomorphic connected branched covers of $S^2$ branched over $\infty$, $1$ and $0$ with labeled points of ramification $(b_1,...,b_n)$ over $\infty$, $(2,2,...,2)$ over 1 and $(r_1,...,r_k)$ over 0 with $r_i>1$.  Sum over all permitted $(r_1,...,r_k)$.
\end{prop}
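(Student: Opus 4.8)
The plan is to exhibit a weight-preserving bijection between the objects that $N_{g,n}(b_1,\dots,b_n)$ counts---integer-metric labeled fatgraphs---and the connected Belyi covers described in the statement. First I would rewrite $N_{g,n}(\bb)$ so that the weighting is transparent. By definition $N_{g,n}(\bb)=\sum_{\Gamma\in\fat}\frac{1}{|{\rm Aut\,}\Gamma|}N_\Gamma(\bb)$, and $N_\Gamma(\bb)$ counts the lattice points of $P_\Gamma(\bb)$, i.e. the integer metrics on the labeled fatgraph $\Gamma$ with boundary lengths $\bb$. Since ${\rm Aut\,}\Gamma$ acts on this finite set of metrics, the orbit--stabiliser relation $|S|=|G|\sum_{\rm orbits}1/|G_x|$ gives $\frac{1}{|{\rm Aut\,}\Gamma|}N_\Gamma(\bb)=\sum_{[m]}\frac{1}{|{\rm Aut\,}m|}$, the sum being over isomorphism classes of integer metrics $m$ on $\Gamma$ and ${\rm Aut\,}m$ the stabiliser (the automorphism group of the metric fatgraph). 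Hence $N_{g,n}(\bb)$ is the count of isomorphism classes of integer-metric labeled fatgraphs of type $(g,n)$ with boundary lengths $\bb$, each weighted by $1/|{\rm Aut}|$. It therefore suffices to match these objects, with their automorphism groups, to the Belyi covers.

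The two maps have already been constructed in the discussion preceding the statement; I would organise them into a bijection. Forward: given such a cover $\pi:\Sigma\to S^2$, set $\Gamma=\pi^{-1}[0,1]$. Because $\pi$ is unramified outside $\{0,1,\infty\}$ and $\Sigma-\Gamma\to\overline{\bc}-[0,1]$ is branched only over $\infty$, each component of $\Sigma-\Gamma$ is a disk, so $\Gamma$ is a fatgraph; using the conformal model $S^1\times\br^+\simeq\bc-[0,1]$, the component of $\pi^{-1}(\bc-[0,1])$ over the $i$-th point of $\pi^{-1}(\infty)$ is a flat cylinder of circumference $b_i$, which becomes the $i$-th boundary component of $\Gamma$. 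Backward: a lattice point of $P_\Gamma(\bb)$ is exactly a labeled metric fatgraph with integer edge lengths and boundary lengths $\bb$; gluing the flat cylinders $\bc-[0,1]$ along the integer-length boundary arcs prescribed by the metric, and extending the conformal structure across each slit, reconstructs a cover $\pi$ with the required ramification.

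The heart of the argument---and the step I expect to require the most care---is the ramification dictionary over $0$, which must simultaneously record the combinatorial fatgraph and its integer metric. Under $\pi$ the graph $\Gamma=\pi^{-1}[0,1]$ is bipartite, with the points of $\pi^{-1}(1)$ (all of ramification $2$, by hypothesis) as valence-$2$ white vertices sitting at edge midpoints, and the points of $\pi^{-1}(0)$ as black vertices of valence equal to their ramification index. A black point of ramification index $\geq 3$ is a genuine valence-$>2$ vertex of the fatgraph in the sense of the paper, whereas a black point of ramification index exactly $2$ is again valence $2$ and so merely subdivides an edge. One checks that an edge of integer length $\ell$ carries exactly $\ell$ white points and $\ell-1$ interior ramification-$2$ black points, and that the $i$-th boundary circle, being a $b_i$-fold cover of the single edge-cycle downstairs through the point over $1$, meets exactly $b_i$ points of $\pi^{-1}(1)$---so its combinatorial length is $b_i$. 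Thus prescribing ramification $(b_1,\dots,b_n)$ over $\infty$, $(2,\dots,2)$ over $1$, and an \emph{arbitrary} partition with all parts $>1$ over $0$, and summing over all such $(r_1,\dots,r_k)$, ranges over precisely the choice of fatgraph (the parts $\geq 3$) together with all integer metrics on it (the parts $=2$ distributed as edge subdivisions)---exactly the data counted by $N_{g,n}(\bb)$.

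Finally I would verify that the two maps are mutually inverse up to isomorphism and that they identify automorphism groups: a deck-type automorphism of $\pi$ covering the identity on $S^2$ and fixing the labels over $\infty$ restricts to a fatgraph automorphism of $\Gamma$ preserving the boundary labels and edge lengths, and conversely every such metric-fatgraph automorphism extends over the glued cylinders to an automorphism of the cover. Connectivity and the genus are preserved by construction, since the thickening of $\Gamma$ is $\Sigma$ and has type $(g,n)$. Hence the bijection is weight-preserving, and summing the weights $1/|{\rm Aut}|$ on each side yields $N_{g,n}(b_1,\dots,b_n)$ on the left and the stated weighted count of Belyi covers on the right.
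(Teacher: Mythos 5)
Your proof is correct and takes essentially the same route as the paper's: the forward map $\Gamma=\pi^{-1}[0,1]$ giving an integer-metric labeled fatgraph, and the converse gluing of flat cylinders $\bc-[0,1]$ along a metric fatgraph, which is exactly the argument the paper gives in the discussion preceding the proposition. The extra bookkeeping you supply---the orbit--stabiliser matching of the $1/|{\rm Aut}|$ weights, the dictionary over $0$ (parts $\geq 3$ as fatgraph vertices, parts $=2$ as integer subdivision points recording the metric), and the identification of automorphism groups---is left implicit in the paper, and you have filled it in correctly.
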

If one sums over all possible ramifications $(r_1,...,r_k)$ i.e allow some $r_i=1$, then the weighted count $M_{g,n}(b_1,...,b_n)$ is no longer polynomial as it includes factorial terms.  Still, $M_{g,n}(b_1,...,b_n)$ is determined by and determines $N_{g,n}(b_1,...,b_n)$.

It is worth repeating the calculation from Exercise~\ref{ex:hur} one more time using the fatgraph perspective.  The preimage of the interval $[0,1]$ defines a fatgraph in the surface $\Sigma$.  We need to count all such fatgraphs.
\addtocounter{ex}{-1}
\begin{ex}[{\bf continued}]
Given a degree 4 branched cover $p:\Sigma\to S^2$, with ramification $\{(4),(2,2),(4)\}$ over $\{0,1,\infty\}\subset\bp^1$, the preimage of the interval $[0,1]$ defines a fatgraph in the genus 1 surface $\Sigma$.  The partition $(4)$ shows that there is a 4-valent vertex of the fatgraph above 0.  The vertices of the fatgraph above 1 are 2-valent, or in other words they denote the centres of edges.  Thus the fatgraph has 2 edges and one 4-valent vertex, hence it is the figure 8. 
\begin{figure}[ht]  
	\centerline{\includegraphics[height=3cm]{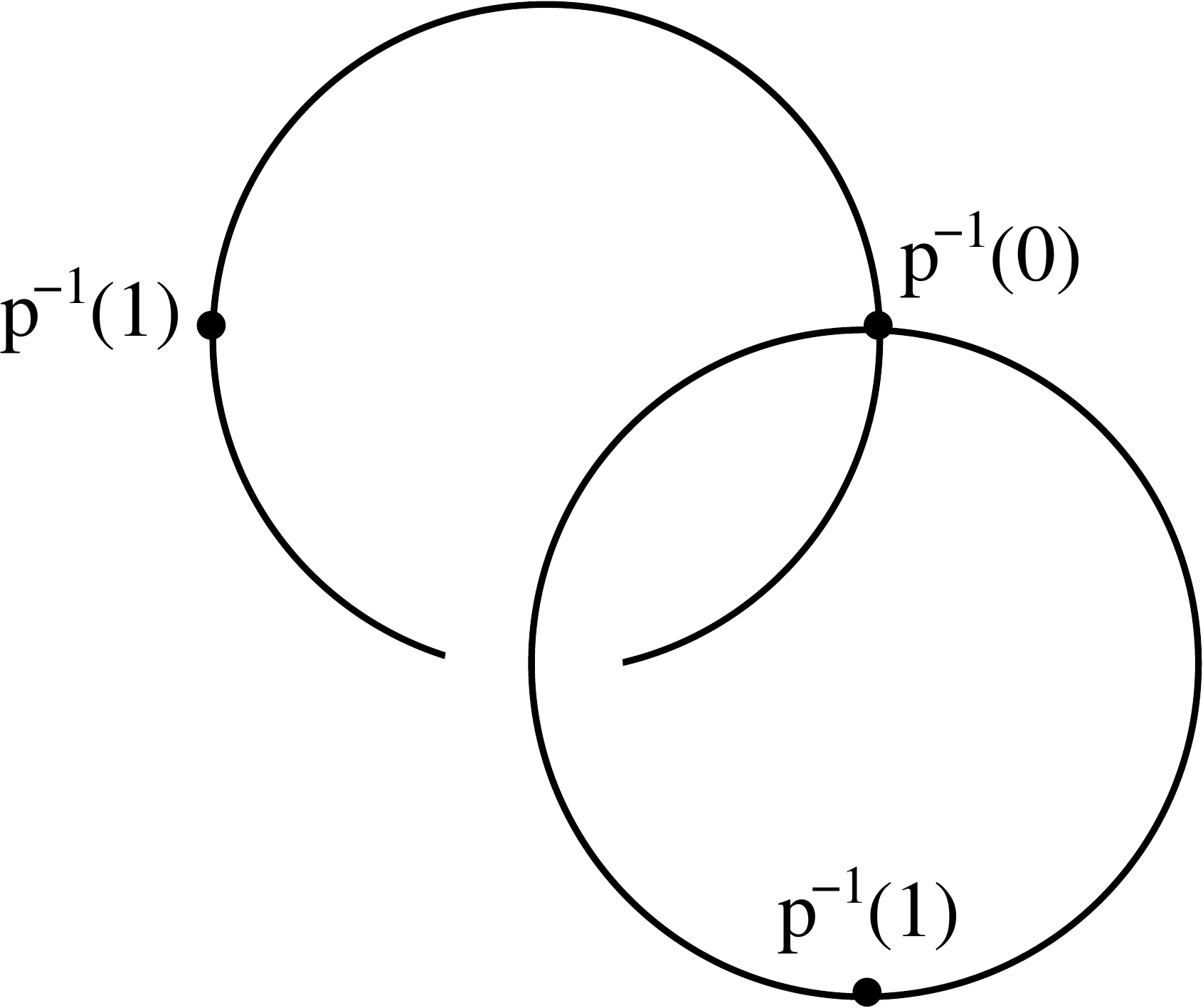}}
	\caption{Preimage of $[0,1]$ is a fatgraph}
	\label{fig:fig8}
\end{figure}
It has automorphism group $\bz_4$ hence the weighted number of fatgraphs is $1/4$.
\end{ex}
Each integral point in the polytope $P_{\Gamma}(b_1,...,b_n)$ is an integer length fatgraph which is also known as a clean {\em dessin d'enfant} defined by Grothendieck \cite{GroEsq}.  The corresponding branched covering $p:\Sigma\to S^2$ is known as a Belyi map.  The adjective ``clean" refers to the ramification $(2,2,...,2)$ of $p$ over 1.  A dessin d'enfant represents a curve in $\modm_{g,n}$ defined over $\overline{\bq}$ so we see that $N_{g,n}(b_1,...,b_n)$ counts only curves defined over $\overline{\bq}$.    See \cite{LZvGra,MPeRib}.
\begin{ex}  \label{ex:dessin}
The polynomial $N_{0,4}(b,b,b,b)=b^2-1$ counts curves defined over $\overline{\bq}$.  A point in $\modm_{0,4}$ corresponding to a curve defined over $\overline{\bq}$ is represented by an algebraic number in $\bc-\{0,1\}$ via the isomorphism $\modm_{0,4}\to\bc-\{0,1\}$ that sends four points in $S^2$ to their cross-ratio.  When $b$ is small, the curves are rather symmetric and the Belyi maps are close to Galois.  

In his famous book \cite{KleVor} of 1884 on Galois theory, Felix Klein described the ring of (projectively) invariant forms for the platonic groups.  We will need polynomials for the tetrahedral symmetry group $A_4$ and the octahedral symmetry group $S_4$ with generators
\[ A_4=\langle z\mapsto -z, 1/z, (z+i)/(z-i)\rangle\subset S_4=\langle z\mapsto iz, 1/z, (z+i)/(z-i)\rangle\]
\begin{itemize}
\item for $A_4$, $K_{\pm}=z^4\pm2\sqrt{3}z^2+1$ 
\item for $S_4$, $K_e=z^5-z$,\quad $K_f=z^8+14z^4+1$.
\end{itemize}
The zeros of $K_{\pm}$ correspond to vertices and faces of the tetrahedron, the zeros of $K_e$ (and $\infty$) correspond to edges of the tetrahedron and vertices of the octahedron, and the zeros of $K_f$ correspond to faces of the octahedron.

\noindent \fbox{$b=2$}  

$N_{0,4}(2,2,2,2)=3$ determines the unique fatgraph in figure~\ref{fig:fat04a} with edge lengths 1, hence boundary lengths 2, and all possible labelings.
\begin{figure}[ht]  
	\centerline{\includegraphics[height=2.5cm]{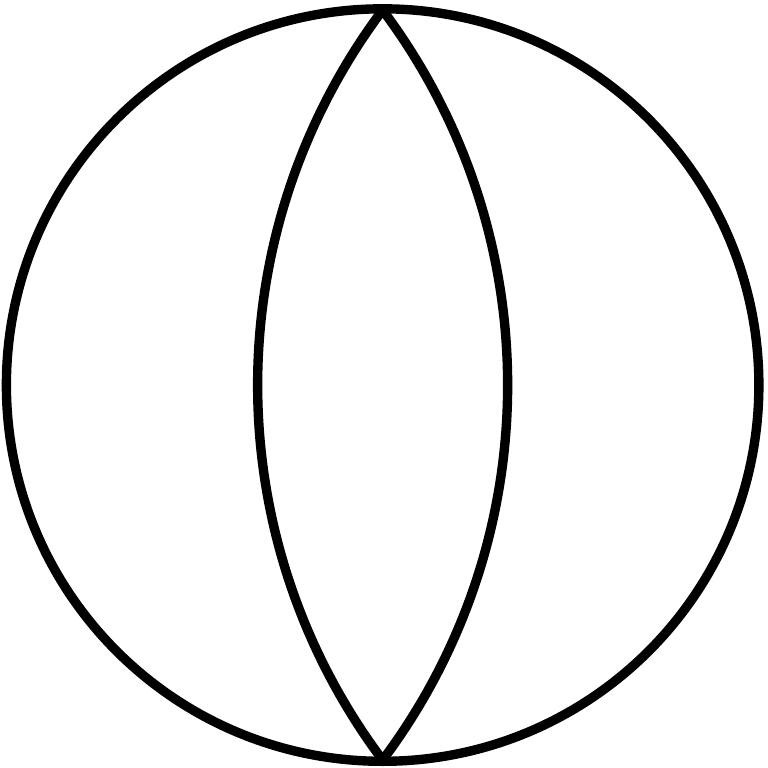}}
	\caption{Branched cover represented by a fatgraph }
	\label{fig:fat04a}
\end{figure}

Thus the three branched covers arise from a unique branched cover which has ramification $\{(4,4),(2,2,2,2),(2,2,2,2)\}$ respectively over $\{0,1,\infty\}$.  This degree 8 example is Galois with Galois group a normal subgroup of order  8 in $S^4$ which acts on the labeled points.  
The covering map is invariant under the subgroup $\langle z\mapsto iz,1/z\rangle$ and is given by
\[ p(z)=\frac{-4z^4}{(z^4-1)^2}. \]
The point in $\modm_{0,4}$ is represented by $p^{-1}(\infty)$ = fourth roots of unity.  Different labelings of the points $p^{-1}(\infty)$ give rise to 3 different labeled branched covers (not 24 due to the symmetry.)  The cross-ratios of the points $p^{-1}(\infty)$ taken in various orders are $\{2,1/2,-1\}$ so these three algebraic numbers correspond to the curves counted by $N_{0,4}(2,2,2,2)=3$.  \\
\\
\fbox{$b=3$}  

$N_{0,4}(3,3,3,3)=8$ determines the fatgraph in figure~\ref{fig:fat04b} with edge lengths 1, and the fatgraph in figure~\ref{fig:fat04a} with edge lengths alternating between 1 and 2.  In both cases all boundary lengths are 3.  All labelings are used which leads to two, respectively six, labeled fatgraphs corresponding to figures~\ref{fig:fat04b} and \ref{fig:fat04a}.
\begin{figure}[ht]  
	\centerline{\includegraphics[height=2.5cm]{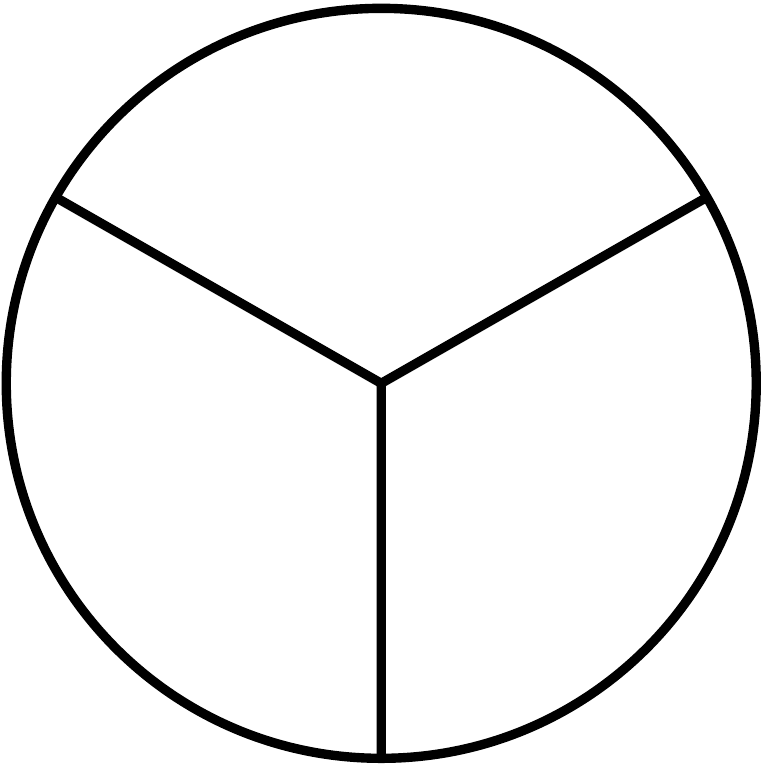}}
	\caption{Branched cover represented by a fatgraph }
	\label{fig:fat04b}
\end{figure}

\noindent The first branched cover has ramification $\{(3,3,3,3),(2,2,2,2,2,2),(3,3,3,3)\}$ respectively over $\{0,1,\infty\}$.  It is Galois with Galois group the full set of tetrahedral symmetries  and we use $K_{\pm}(z)$.  The covering map is
\[ p_1(z)=\frac{(z^4+2\sqrt{3}z^2+1)^3}{(z^4-2\sqrt{3}z^2+1)^3}. \]
The second branched cover has ramification $\{(2,2,4,4),(2,2,2,2,2,2),(3,3,3,3)\}$ respectively over $\{0,1,\infty\}$.  It has double cover with ramification $\{(4^6),(2^{12}),(3^8)\}$ which is Galois with Galois group the full set of octahedral symmetries.  The covering map $p_2$ satisfies $p_2(z^2)=108K_e^4/K_f^3$ and
\[ p_2(z)=\frac{108z^2(z^2-1)^4}{(z^4+14z^2+1)^3}. \]
The two points in $\modm_{0,4}$ represented by $p_1^{-1}(\infty)$ (using different labelings) have cross-ratios $\exp(\pm \pi i/3)$.   The six points in $\modm_{0,4}$ represented by $p_2^{-1}(\infty)$ (using different labelings) have cross-ratios $\{4,-3,1/4,-1/3,4/3,3/4\}$.  Hence the eight algebraic numbers correspond to the curves counted by $N_{0,4}(3,3,3,3)=8$ are 
\[\{4,-3,1/4,-1/3,4/3,3/4,\exp(\pi i/3),\exp(-\pi i/3)\}.\]  They are drawn as lattice points in cells of $\modm_{0,4}$ by gluing together two copies of Figure~\ref{fig:cell}.

The cross-ratios of the points $p^{-1}(\infty)$ and $p_1^{-1}(\infty)$ can be calculated without calculating $p$ and $p_1$.   The symmetric group $S_4$ acts on $\modm_{0,4}$ with generic orbit consisting of 6 points 
\[\left\{c,1-c,\frac{1}{c},\frac{1}{1-c},\frac{c}{c-1},\frac{c-1}{c}\right\}.\]  
Equivalently, a cross-ratio of 4 points can take on 6 different values depending on the order of the points.  There are two exceptional orbits of $S_4$ consisting of 3 points and 2 points, given by $\{-1,2,1/2\}$, respectively $\{\exp(\pi i/3),\exp(-\pi i/3)\}$, agreeing with the calculations above.
\end{ex}

\section{Laplace transforms}  \label{sec:lap}

The Kontsevich volume polynomial $V_{g,n}(b_1,...,b_n)$ is the first example in a collection of polynomials that give information about the moduli space of curves  and a Hurwitz problem.  Kontsevich studied $V_{g,n}(b_1,...,b_n)$ via its Laplace transform \cite{KonInt} and a matrix model.  The quasi-polynomial $N_{g,n}(k_1,...,k_n)$ and the polynomial term $P_{g,n}(k_1,...,k_n)$ in the ELSV formula are other examples.  They are both related to Kontsevich's volume polynomial $V_{g,n}(b_1,...,b_n)$.    

A fourth example related to Kontsevich's volume polynomial $V_{g,n}(b_1,...,b_n)$ is Mirzakhani's calculation of the Weil-Petersson volume of moduli space.
Let  $\modm_{g,n}({\bf L})$ be the moduli space of connected oriented genus $g$ hyperbolic surfaces with $n$ labeled geodesic boundary components of non-negative real lengths ${\bf L}=(L_1,...,L_n)$.  It comes equipped with a symplectic form $\omega$ which gives rise to the Weil-Petersson volume 
\[V^{WP}_{g,n}({\bf L})=\int_{\modm_{g,n}({\bf L})}\frac{\omega^N}{N!}\]
where $2N=\dim\modm_{g,n}({\bf L})=6g-6+2n$.
\begin{thm}[Mirzakhani \cite{MirSim,MirWei}]  \label{th:mirz}
$V^{WP}_{g,n}({\bf L})$ are polynomials in ${\bf L}=(L_1,...,L_n)$ satisfying
\[V^{WP}_{g,n}(L_1,...,L_n)=2^{-\chi}V_{g,n}(L_1,...,L_n)+{\rm lower\ order\ terms}\]
for $\chi=2-2g-n$.
\end{thm}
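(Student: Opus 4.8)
The plan is to reduce the statement to an intersection-theoretic computation on $\overline{\modm}_{g,n}$ and then match it against Kontsevich's Theorem~\ref{th:Kon}. The first and hardest step is to establish that $V^{WP}_{g,n}(\mathbf{L})$ is a polynomial whose coefficients are intersection numbers. I would obtain this from the symplectic geometry of the moduli space: the Weil--Petersson form $\omega$ on $\modm_{g,n}(\mathbf{L})$ can be understood via the Fenchel--Nielsen twist flows about the $n$ boundary geodesics, which, after fixing the boundary lengths, generate Hamiltonian circle actions whose moment maps are $\tfrac12 L_i^2$, so that varying $\mathbf{L}$ realizes the spaces $\modm_{g,n}(\mathbf{L})$ as symplectic reductions of a single space. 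Applying the Duistermaat--Heckman theorem (together with Wolpert's formula for the variation of the Weil--Petersson class) shows that, as a cohomology class on the fixed underlying moduli space, $[\omega(\mathbf{L})] = [\omega(0)] + \tfrac12\sum_{i=1}^n L_i^2\,\psi_i$, where $\psi_i = c_1(L_i)$. I expect this cohomological identity to be the main obstacle, since it requires controlling the symplectic structure uniformly in $\mathbf{L}$ and extending the relevant classes across the compactification $\overline{\modm}_{g,n}$.

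Granting this, polynomiality is immediate and the volume is computed by integrating the top exterior power:
\[V^{WP}_{g,n}(\mathbf{L}) = \int_{\overline{\modm}_{g,n}} \frac{\omega^N}{N!} = \frac{1}{N!}\int_{\overline{\modm}_{g,n}}\left([\omega(0)] + \tfrac12\sum_{i=1}^n L_i^2\,\psi_i\right)^N,\quad N = 3g-3+n.\]
Expanding the binomial exhibits $V^{WP}_{g,n}(\mathbf{L})$ as a polynomial in the $L_i^2$ of degree $N$, i.e.\ of degree $6g-6+2n$ in $\mathbf{L}$, whose coefficients are mixed intersection numbers of the $\psi_i$ against powers of the Weil--Petersson class.

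The comparison with $V_{g,n}$ then comes from isolating the top-degree part. The highest $\mathbf{L}$-degree terms arise solely from the $\psi$-contribution, since $[\omega(0)]$ carries no $L_i$ dependence:
\[V^{WP}_{g,n}(\mathbf{L}) = \sum_{|\alpha| = N}\frac{1}{2^{N}\,\alpha!}\left(\int_{\overline{\modm}_{g,n}}\psi_1^{\alpha_1}\cdots\psi_n^{\alpha_n}\right)\prod_{i=1}^n L_i^{2\alpha_i} + \text{lower order terms}.\]
By Theorem~\ref{th:Kon} the same $\psi$-intersection numbers are the top coefficients of $V_{g,n}(b_1,\ldots,b_n)$, but weighted by $1/2^{5g-5+2n}$ rather than $1/2^{N}=1/2^{3g-3+n}$. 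Since $2^{5g-5+2n}/2^{3g-3+n} = 2^{2g-2+n} = 2^{-\chi}$, the two top-degree homogeneous parts agree after multiplying that of $V_{g,n}$ by $2^{-\chi}$, which is exactly the asserted identity $V^{WP}_{g,n} = 2^{-\chi}V_{g,n} + \text{lower order terms}$.

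As an alternative to the symplectic route for the first step, one could instead prove polynomiality and the intersection-number formula through Mirzakhani's generalized McShane identity, unfolding the resulting integral over $\modm_{g,n}(\mathbf{L})$ along a cover indexed by embedded pairs of pants to obtain a recursion in $(g,n)$; the base cases $(0,3)$ and $(1,1)$ are explicit, and the top-degree part of the recursion reduces to the Witten--Kontsevich recursion for the $\psi$-intersection numbers, again matching the top term of $V_{g,n}$. In either approach the only substantive input beyond the excerpt is the identification of the Weil--Petersson volume with these intersection numbers; once that is in hand, the factor $2^{-\chi}$ is forced by the exponent bookkeeping in Theorem~\ref{th:Kon}.
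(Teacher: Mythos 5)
Your proposal is correct and follows exactly the route the paper indicates for this theorem: Mirzakhani's identification, via symplectic reduction and Duistermaat--Heckman (giving $[\omega({\bf L})]=[\omega(0)]+\tfrac12\sum L_i^2\psi_i$), of the coefficients of $V^{WP}_{g,n}$ with intersection numbers on $\overline{\modm}_{g,n}$, followed by matching top-degree coefficients against Theorem~\ref{th:Kon}, where the factor $2^{-\chi}$ is forced by $2^{5g-5+2n}/2^{3g-3+n}=2^{2g-2+n}$. The paper gives no independent proof---it cites Mirzakhani and describes precisely this intersection-number argument (mentioning only in passing an alternative scaling-limit proof via hyperbolic surfaces with long boundaries degenerating to fatgraphs)---so your argument coincides with the proof the paper relies on.
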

Mirzakhani proved the relationship between $V^{WP}_{g,n}$ and Kontsevich's volume polynomial $V_{g,n}$ by identifying the coefficients in the polynomial $V^{WP}_{g,n}$ with intersection numbers over the compactified moduli space.  One can also prove this relationship by considering $V^{WP}_{g,n}(NL_1,...,NL_n)$ as $N\to\infty$.   Hyperbolic surfaces with large geodesic boundary lengths necessarily become very thin to keep their area constant and hence converge to fatgraphs \cite{DoInt,MonTri}.
\begin{table}[ht]  \label{tab:volwp}
\caption{Weil-Petersson volumes}
\begin{spacing}{1.4}  
\begin{tabular}{||l|c|c||} 
\hline\hline
{\bf g} &{\bf n}&$V^{WP}_{g,n}(L_1,...,L_n)$\\ \hline
0&3&1\\ \hline
1&1&$\frac{1}{48}\left(L^2+4\pi^2\right)$\\ \hline
0&4&$\frac{1}{2}\left(L_1^2+L_2^2+L_3^2+L_4^2+4\pi^2\right)$\\ \hline
1&2&$\frac{1}{192}\left(L_1^2+L_2^2+4\pi^2\right)\left(L_1^2+L_2^2+12\pi^2\right)$\\ \hline
2&1&$\frac{1}{2^{14}3^35}\left(L_1^2+4\pi^2\right)\left(L_1^2+12\pi^2\right)\left(5L_1^4+384\pi^2L_1^2+6960\pi^4\right)$\\
\hline\hline
\end{tabular} 
\end{spacing}
\end{table}

\subsection{Total derivative of the Laplace transform}  \label{sec:totder}
It was shown in Section~\ref{sec:convpvl} that the Laplace transform brings a transparent structure to the volume and number of lattice points of a convex polytope.  The Laplace transform $\lc\{P_{g,n}\}(z_1,...,z_n)$ of $P_{g,n}=$ $V_{g,n}(b_1,....,b_n)$, $N_{g,n}(b_1,....,b_n)$, $H_{g,n}(\mu_1,...,\mu_n)$ or $V_{g,n}^{WP}(L_1,....,L_n)$ extends to a meromorphic function on the whole complex plane in each variable $z_i$.  The discrete Laplace transform is used for the enumerative problems and the continuous Laplace transform for volumes.  
\begin{defn}
For $P_{g,n}=$ $V_{g,n}(b_1,....,b_n)$, $N_{g,n}(b_1,....,b_n)$, $H_{g,n}(\mu_1,...,\mu_n)$ or $V_{g,n}^{WP}(L_1,....,L_n)$ define
\[\omega^g_n=\frac{\partial}{\partial z_1}...\frac{\partial}{\partial z_n}\lc\{P_{g,n}\}dz_1...dz_n.\]
\end{defn}
\noindent When $P_{g,n}=V_{g,n}(b_1,....,b_n)$,
\begin{align*}
\omega^0_3[{\rm Airy}]&=-\frac{1}{2}\frac{dz_1dz_2dz_3}{z_1^2z_2^2z_3^2}\\
\omega^1_1[{\rm Airy}]&=-\frac{dz}{16z^4}\\
\omega^0_4[{\rm Airy}]&=\frac{1}{2}\frac{dz_1dz_2dz_3dz_4}{z_1^2z_2^2z_3^2z_4^2}\left(\frac{1}{z_1^2}+\frac{1}{z_2^2}+\frac{1}{z_3^2}+\frac{1}{z_4^2}\right)
\end{align*}
and the poles of $\omega^g_n[{\rm Airy}]$ occur at $z_i=0$.  We associate the term Airy with Kontsevich's volume polynomial in anticipation of the framework of Eynard and Orantin described later.
When $P_{g,n}=N_{g,n}(b_1,....,b_n)$,
\begin{align*}
\omega^0_3&=\left\{\frac{1}{2\prod(1-z_i)^2}-\frac{1}{2\prod(1+z_i)^2}\right\}\prod dz_i\\
\omega^1_1&=\frac{z^3dz}{(1-z^2)^4}\\
\omega^0_4&=\left\{\frac{3}{4\prod(1-z_i)^2}\sum\frac{z_i}{(1-z_i)^2}-\frac{3}{4\prod(1+z_i)^2}\sum\frac{z_i}{(1+z_i)^2}\right.\\
&\left.\quad+\frac{\sum z_iz_j(1+z_k^2)(1+z_l^2)}{2\prod(1-z_i^2)^2}\right\}\prod dz_i.\\
\end{align*}
The poles of $\omega^g_n$ occur at $z_i=\pm 1$.  The asymptotic behaviour of $\omega^g_n$ near its poles $z_i=\pm 1$ is $\omega^g_n[{\rm Airy}]$.  More precisely, consider the change of variables $z_i=1+sx_i$ where $x_i$ is a local coordinate in a neighbourhood of the pole.  The dominant asymptotic term as $s\rightarrow 0$ is
\begin{equation} \label{eq:asym}
\omega^g_n\sim s^{6-6g-3n}\omega^g_n[Airy].
\end{equation}
The same asymptotic behaviour occurs around $z_i=-1$.  This case is a consequence of the relationship between the continuous and discrete Laplace transforms.  What is striking is that Eynard and Orantin have developed a framework arising out of the study of matrix models \cite{EOrInv} that unifies these different problems and explains the appearance of Kontsevich's volume polynomial.

\subsection{Eynard-Orantin invariants}   \label{sec:eo}

A Hermitian {\em matrix model} is an integral over the space $H_N$ of $N\times N$ Hermitian matrices
\[ \int_{H_N}\exp\{V(M)\}d\mu(M)\]
for a function $V:H_N\to\bc$, known as a potential, with respect to a Gaussian measure $d\mu$.   Matrix models are closely related to graphical enumeration \cite{BIZQua} and to the cell decomposition of the moduli space of curves beginning with calculations of the Euler characteristic of the moduli space of curves by Harer and Zagier \cite{HZaEul} and Penner \cite{PenPer} and intersection numbers on the moduli space of curves by Kontsevich \cite{KonInt}.  Kontsevich proved that an asymptotic expansion of a matrix model gives a generating function for the Laplace transforms of the volume polynomials $V_{g,n}(b_1,...,b_n)$ which he related to intersection numbers on $\overline{\modm}_{g,n}$.  Chekhov \cite{CheMat,CheMat1} studied a matrix model related to $N_{g,n}(k_1,...,k_n)$ and produced the discrete Laplace transforms of $N_{0,3}(b_1,b_2,b_3)=1$ and $N_{1,1}(b)=\frac{1}{48}\left(b^2-4\right)$.

The expected location of eigenvalues of a matrix model for large matrices gives rise to a Riemann surface, known as the spectral curve associated to the matrix model \cite{EOrTop}.  Eynard and Orantin \cite{EOrInv} use the ideas from matrix models to define invariants of plane curves without referring to a matrix model.  They associate multidifferentials $\omega^g_n$ to any Torelli marked Riemann surface $C$ equipped with two meromorphic functions $x$ and $y$ with the property that the branch points of $x$ are simple and the map
\[ \begin{array}[b]{rcl} C&\to&\bc^2\\p&\mapsto& (x(p),y(p))\end{array}\]
is an immersion.  Recall that a {\em Torelli marking} of $C$ is a choice of symplectic basis $\{a_i, b_i\}_{i=1,..,g}$ of the first homology group $H_1(\bar{C})$ of the compact closure $\bar{C}$ of $C$.

For every $(g,n)\in\bz^2$ with $g\geq 0$ and $n>0$ Eynard and Orantin \cite{EOrInv} define a multidifferential, i.e. a tensor product of meromorphic 1-forms on the product $C^n$, denoted by $\omega^g_n(p_1,...,p_n)$ for $p_i\in C$.  When $2g-2+n>0$, $\omega^g_n(p_1,...,p_n)$ is defined recursively in terms of local  information around the poles of $\omega^{g'}_{n'}(p_1,...,p_n)$ for $2g'+2-n'<2g-2+n$.  

For $2g-2+n>0$, the poles of $\omega^g_n(p_1,...,p_n)$ occur at the zeros of $dx$.  Since each branch point $\alpha$ of $x$ is simple, for any point $p\in C$ close to $\alpha$ there is a unique point $\hat{p}\neq p$ close to $\alpha$ such that $x(\hat{p})=x(p)$.  The recursive definition of $\omega^g_n(p_1,...,p_n)$ uses only local information around zeros of $dx$ and makes use of the well-defined map $p\mapsto\hat{p}$ there.  Equivalently, the $\omega^{g'}_{n'}(p_1,...,p_n)$ are used as kernels on the Riemann surface appearing as part of an integrand.  This is a familiar idea, the main example being the Cauchy kernel which gives the derivative of a rational function in terms of the bidifferential $dwdz/(w-z)^2$ as follows
\[ f'(z)dz=\res{w=z}\frac{f(w)dwdz}{(w-z)^2}=-\sum_{\alpha}\res{w=\alpha}\frac{f(w)dwdz}{(w-z)^2}\]
where the sum is over all poles $\alpha$ of $f(w)$.  

The Cauchy kernel generalises to a bidifferential $B(w,z)$ on any Torelli marked Riemann surface $C$ given by the meromorphic differential $\eta_w(z)dz$ unique up to scale which has a double pole at $w\in C$ and all $A$-periods vanishing.   The scale factor can be chosen so that $\eta_w(z)dz$ varies holomorphically in $w$, and transforms as a 1-form in $w$ and hence it is naturally expressed as a unique bidifferential $B(w,z)$ on $C$.  More precisely, define the {\em Bergmann kernel} $B(w,z)$ by

(i) $B(w,z)=B(z,w)$,

(ii) $B(w,z)$ has a second order pole on the diagonal $w=z$ and biresidue 1,

(iii) the $A$-periods of $B(w,z)$ vanish.

The term {\em Bergmann kernel} was introduced by Tyurin \cite{TyuPer}.  It is called the fundamental normalised differential of the second kind on $C$ in \cite{FayThe}.  Recall that a meromorphic differential is {\em normalised} if its $A$-periods vanish and it is of the {\em second kind} if its residues vanish.  It is used to express a normalised differential of the second kind in terms of local  information around its poles.  Define
\begin{equation} \label{eq:berg}
\omega^0_1=-ydz,\quad \omega^0_2=B(w,z)
\end{equation}
and for $2g-2+n>0$,
\begin{equation}  \label{eq:EOrec}
\omega^g_{n+1}(z_0,z_S)=\sum_{\alpha}\hspace{0mm}\res{z=\alpha}\ K(z_0,z)\hspace{0mm}\biggr[\omega^{g-1}_{n+2}(z,\hat{z},z_S)+\hspace{-5mm}\displaystyle\sum_{\begin{array}{c}_{g_1+g_2=g}\\_{I\sqcup J=S}\end{array}}\hspace{-2mm}
\omega^{g_1}_{|I|+1}(z,z_I)\omega^{g_2}_{|J|+1}(\hat{z},z_J)\biggr]
\end{equation}
where the sum is over branch points $\alpha$ of $x$, $S=\{1,...,n\}$, $I$ and $J$ are non-empty and 
\[\displaystyle K(z_0,z)=\frac{-\int^z_{\hat{z}}B(z_0,z')}{2(y(z)-y(\hat{z}))dx(z)}\] is well-defined in the vicinity of each branch point of $x$.   Note that the quotient of a differential by the differential $dx(z)$ is a meromorphic function.

Eynard and Orantin proved \cite{EOrTop} that the invariant $\omega^g_n$ for any plane curve $C$ behaves asymptotically like (\ref{eq:asym}) near its poles.  The proof of this asymptotic behaviour starts with the simple observation that near each branch point the plane curve $C$ locally resembles the Airy curve $y^2=x$.   In particular the Kontsevich volume $V_{g,n}(b_1,...,b_n)$ is intimately related to the invariant for any curve. 

The total derivatives of the Laplace transform of the Kontsevich volume polynomial $V_{g,n}(b_1,...,b_n)$ define the Eynard-Orantin invariants $\omega^g_n$ for the curve $x=z^2$, $y=z$ known as the Airy curve, \cite{EOrInv}.   The total derivatives of the discrete Laplace transform of the lattice count quasi-polynomial $N_{g,n}(b_1,...,b_n)$ define the Eynard-Orantin invariants $\omega^g_n$ for the curve $x=z+1/z$, $y=z$,  \cite{NorStr}.  The total derivatives of the discrete Laplace transform of the simple labeled Hurwitz numbers $H_{g,n}(\mu_1,...,\mu_n)$ define the Eynard-Orantin invariants $\omega^g_n$ for the Lambert curve $x=-z+\ln(z/t)$, $y(z)=z$, \cite{BEMSMat,BMaHur,EMSLap}.  The total derivatives of the Laplace transform of the Weil-Petersson volume polynomial $V_{g,n}^{WP}(b_1,...,b_n)$ define the Eynard-Orantin invariants $\omega^g_n$ for the curve $x=z^2$, $y=-\sin(2\pi z)/4\pi$, \cite{EOrWei}.  The last two cases represent sequences of algebraic curves obtained by truncating the expansions for $y$ around the zeros of $dx$.  In each case the asymptotic behaviour of $\omega^g_n$ near its poles explains the relationship with $V_{g,n}(b_1,....,b_n)$.  A large class of polynomials with highest degree terms given by $V_{g,n}(b_1,....,b_n)$ coming from Eynrad-Orantin invariants is studied in \cite{NScPol}.

Various other Hurwitz problems appear in the literature.  When $\Sigma'=T^2$ and all branch points are simple the generating function for the Hurwitz numbers over all covering degrees is a modular form \cite{KonHom}.  In \cite{OPaGro}, simple branching is generalised to the Hurwitz problem of covers of $S^2$ unramified on $S^2-\{p_1,...,p_r\}$ with ramification $(b_i,1^{d_i})$ over $p_i$ and this Hurwitz problem is related to the Gromov-Witten invariants of $\bp^1$.   The Gromov-Witten invariants of $\bp^1$ assemble to give the Eynard-Orantin invariants $\omega^g_n$ for the curve $x=z+1/z$, $y=\ln{z}$, \cite{NScGro}.  Double Hurwitz numbers are defined to be the weighted count of genus $g$ connected covers of $S^2$ with ramification $\mu$ over $\infty$, $\nu$ over $0$ and simple ramification elsewhere \cite{GJVTow}. Given a partition $\mu$ of $2d$ and a partition $\nu$ of $2d$ into odd parts, \cite{EOkPil} considers the Hurwitz problem of branched covers of $S^2$ with ramification over $0$ given by $\left(\nu,2^{d-|\nu|/2}\right)$, ramification $\left(2^d\right)$ over 3 other points, and ramification $\left(\mu_i,1^{2d-\mu_i}\right)$ over $l(\mu)$ given points.  Three of the Hurwitz problems described here---simple Hurwitz numbers, Gromov-Witten invariants of $\bp^1$ and the Belyi Hurwitz problem---have a common formulation in terms of Eynard-Orantin invariants.  It would be interesting to find such formulations for other Hurwitz problems.  This could bring further understanding to the Eynard-Orantin invariants and uncover relations between Hurwitz problems and the moduli space of Riemann surfaces.


\begin{thebibliography}{99}

\bibitem{BIZQua}  Bessis, Daniel, Itzykson, Claude and Zuber, Jean-Bernard
\emph{Quantum field theory techniques in graphical enumeration.} 
Adv. in Appl. Math. {\bf 1} (1980), 109-157. 

\bibitem{BEMSMat}
Borot, Ga\"etan, Eynard, Bertrand, Mulase Motohico and Safnuk, Brad.
\emph{A matrix model for simple Hurwitz numbers, and topological recursion.}
arXiv:0906.1206

\bibitem{BMaHur} Bouchard, Vincent and Mari\~no, Marcos 
\emph{Hurwitz numbers, matrix models and enumerative geometry.}
From Hodge theory to integrability and TQFT tt*-geometry, 263-283, Proc. Sympos. Pure Math., {\bf 78}, Amer. Math. Soc., Providence, RI, 2008.

\bibitem{BEpNat}
Bowditch, Brian and Epstein, David
\emph{Natural triangulations associated to a surface.} 
Topology {\bf 27} (1988), 91-117. 

\bibitem{BVeRes}  Brion, Michel and Vergne, Mich\'{e}le 
\emph{Residue formulae, vector partition functions and lattice points in rational polytopes.} 
J. Amer. Math. Soc. {\bf 10} (1997), 797-833.

\bibitem{CheMat} Chekhov, Leonid
\emph{Matrix model for discretized moduli space.} 
J. Geom. Phys. {\bf 12} (1993), 153-164.

\bibitem{CheMat1} Chekhov, Leonid
\emph{Matrix model tools and geometry of moduli spaces.}
Acta Appl. Math. {\bf 48} (1997), 33-90.

\bibitem{DoInt} Do, Norman 
\emph{Intersection theory on moduli spaces of curves via hyperbolic geometry.}
PhD Thesis (2008).

\bibitem{DNoCou}
Do, Norman  and Norbury, Paul
\emph{Counting lattice points in compactified moduli spaces of curves.}
arXiv:1012.5923

\bibitem{EhrPol}  Ehrhart, E.
\emph{Polyn\^omes arithm\'etiques et m\'ethode des poly\'edres en combinatoire.}
International Series of Numerical Mathematics, Vol. 35,  Birkhauser
Verlag, Basel, 1977.

\bibitem{ELSVHur}  Ekedahl, Torsten; Lando, Sergei;
Shapiro, Michael and Vainshtein, Alek.
\emph{Hurwitz numbers and intersections on moduli spaces of curves.}
Invent. Math. {\bf 146} (2001), 297-327.

\bibitem{EOkPil}  Eskin, Alex and Okounkov, Andrei 
\emph{Pillowcases and quasimodular forms.} 
Algebraic geometry and number theory, 1-25, Progr. Math., 253, Birkh\"auser Boston, Boston, MA, 2006. 

\bibitem{EMSLap} Eynard, Bertrand, Mulase Motohico and Safnuk, Brad.
\emph{The Laplace transform of the cut-and-join equation and the Bouchard-Marino conjecture on Hurwitz numbers.}
arXiv:0907.5224 

\bibitem{EOrInv} Eynard, Bertrand and Orantin, Nicolas
\emph{Invariants of algebraic curves and topological expansion.}
Communications in Number Theory and Physics {\bf 1}
(2007), 347-452.

\bibitem{EOrTop} Eynard, Bertrand and Orantin, Nicolas
\emph{Topological recursion in enumerative geometry and
random matrices.}
J. Phys. A: Math. Theor. {\bf 42} (2009) 293001 (117pp).

\bibitem{EOrWei} Eynard, Bertrand and Orantin, Nicolas
\emph{Weil-Petersson volume of moduli spaces, Mirzakhani's recursion and matrix models.} 
arXiv:0705.3600.

\bibitem{FayThe}  Fay, J.D.
\emph{Theta functions on Riemann surfaces.} 
Springer Verlag, 1973.

\bibitem{GJVTow}  Goulden, Ian; Jackson, David and Vakil, Ravi 
\emph{Towards the geometry of double Hurwitz numbers.} 
Adv. Math. {\bf 198} (2005), 43-92.

\bibitem{GroEsq} Grothendieck, Alexandre
\emph{Esquisse d'un Programme.}
London Math. Soc. Lecture Note Ser. {\bf 242}, Geometric Galois actions, 1, 5-48, Cambridge Univ. Press, Cambridge, 1997.

\bibitem{HarVir} Harer, John 
\emph{The virtual cohomological dimension of the mapping class group of an orientable surface.} 
Invent. Math. {\bf 84} (1986), 157-176.

\bibitem{HZaEul} Harer, John and Zagier, Don
\emph{The Euler characteristic of the moduli space of curves.}
Invent. Math. {\bf 85} (1986), 457-485.

\bibitem{HurHur}  Hurwitz, Adolf
\emph{\"Uber die Anzahl der Riemann'schen Fl\"{a}chen mit gegebenen
Verzweigungspunkten.} 
Math. Ann. {\bf 55} (1902), 53-66.

 \bibitem{KleVor} Klein, Felix
\emph{Vorlesungen \"uber das Ikosaeder und die Aufl\"osung der
    Gleichung vom f\"unften Grade.}  Teubner, Leipzig,  1884.

\bibitem{KonHom}  Kontsevich, Maxim 
\emph{Homological algebra of mirror symmetry. Proceedings of the International Congress of Mathematicians.} Vol. 1, 2 (Z\"urich, 1994), 120-139, Birkh\"auser, Basel, 1995. 

\bibitem{KonInt}  Kontsevich, Maxim 
\emph{Intersection theory on the moduli space of curves and the matrix Airy function.} 
Comm. Math. Phys. {\bf 147} (1992), 1-23.

\bibitem{LZvGra}  Lando, Sergei and Zvonkin, Alexander 
\emph{Graphs on surfaces and their applications. With an appendix by Don B. Zagier.} 
Encyclopaedia of Mathematical Sciences, 141. Low-Dimensional Topology, II. Springer-Verlag, Berlin, 2004.

\bibitem{LooCel} Looijenga, Eduard
\emph{Cellular decompositions of compactified moduli spaces of pointed curves.} 
The moduli space of curves (Texel Island, 1994), 369-400, 
Progr. Math., {\bf 129}, Birkh\"auser Boston, Boston, MA, 1995.

\bibitem{MirSim}
Mirzakhani, Maryam 
\emph{Simple geodesics and Weil-Petersson volumes of moduli spaces of bordered Riemann surfaces.}
Invent. Math. {\bf 167} (2007), 179-222.

\bibitem{MirWei}
Mirzakhani, Maryam 
\emph{Weil-Petersson volumes and intersection theory on the moduli space of curves.}
J. Amer. Math. Soc., {\bf 20} (2007), 1-23.

\bibitem{MonTri} Mondello, Gabriele 
\emph{Triangulated Riemann surfaces with boundary and the Weil-Petersson Poisson structure.} J. Differential Geom. {\bf 81} (2009), 391-436.

\bibitem{MPeRib}  Mulase, Motohico and Penkava, Michael
\emph{Ribbon graphs, quadratic differentials on Riemann surfaces, and algebraic curves defined over $\overline{\mathbf Q}$.}
Mikio Sato: a great Japanese mathematician of the twentieth century. 
Asian J. Math. {\bf 2} (1998),  875-919.

\bibitem{NorCou}
Norbury, Paul
\emph{Counting lattice points in the moduli space of curves.}
Math. Res. Lett. {\bf 17} (2010), 467-481.

\bibitem{NorStr}
Norbury, Paul
\emph{String and dilaton equations for counting lattice points in the moduli space of curves.}
To Appear in Transactions AMS.
arXiv:0905.4141 

\bibitem{NScGro} Norbury, Paul and Scott, Nick
\emph{Gromov-Witten invariants of $\bp^1$ and Eynard-Orantin invariants.}
 arXiv:1106.1337
 
\bibitem{NScPol} Norbury, Paul and Scott, Nick
\emph{Polynomials representing Eynard-Orantin invariants.}
 arXiv:1001.0449 

\bibitem{OPaGro} Okounkov, A and Pandharipande, R.
\emph{Gromov-Witten theory, Hurwitz theory, and completed cycles.}
Ann. of Math. (2) {\bf 163} (2006), 517-560.

\bibitem{PenDec}  Penner, R. C. 
\emph{The decorated Teichm\"uller space of punctured surfaces.} Comm. Math. Phys. {\bf 113} (1987), 299-339.

\bibitem{PenPer}  Penner, R. C. 
\emph{Perturbative series and the moduli space of Riemann surfaces.} J. Diff. Geom. {\bf 27} (1988), 35-53.

\bibitem{StrQua} Strebel, Kurt 
\emph{Quadratic differentials.} Ergebnisse der Mathematik und ihrer Grenzgebiete 5. Springer-Verlag, Berlin, 1984.

\bibitem{StuVec}  Sturmfels, Bernd 
\emph{On vector partition functions.}
J. Combin. Theory Ser. A {\bf 72} (1995), 302-309.

\bibitem{TyuPer} Tyurin, A. N. 
\emph{On periods of quadratic differentials.}
 Russ. Math. Surv. {\bf 33} (1978), 169.

\bibitem{WitTwo} Witten, Edward
\emph{Two-dimensional gravity and intersection theory on moduli space.} Surveys in differential geometry (Cambridge, MA, 1990), 243-310, Lehigh Univ., Bethlehem, PA, 1991.

\end{thebibliography}
\end{document}